\newtheorem{theorem}{Theorem}[section]
\newtheorem{corollary}[theorem]{Corollary}
\newtheorem{proposition}[theorem]{Proposition}
\newtheorem{lemma}[theorem]{Lemma}
\newtheorem{claim}[theorem]{Claim}
\theoremstyle{remark}
\theoremstyle{definition}
\renewcommand{\(}{\left(}
\renewcommand{\)}{\right)}
\DeclareMathOperator{\lk}{lk}
\begin{document}

\title{Large alternating Montesinos knots do not admit purely cosmetic surgeries}

 \author{Kazuhiro Ichihara}
 \address{Department of Mathematics, College of Humanities and Sciences, Nihon University, 3-25-40 Sakurajosui, Setagaya-ku, Tokyo 156-8550, JAPAN}
 \email{ichihara.kazuhiro@nihon-u.ac.jp}

 \author{In Dae Jong}
 \address{Department of Mathematics, Kindai University, 3-4-1 Kowakae, Higashiosaka City, Osaka 577-0818, Japan} 
 \email{jong@math.kindai.ac.jp}

\dedicatory{Dedicated to Professor Masakazu Teragaito on his 60th birthday.}

\date{\today}

 \subjclass[2020]{Primary 57K30, Secondary 57K10}

 \keywords{alternating knot, Montesinos knot, cosmetic surgery, signature, finite type invariant}


\begin{abstract}
It is conjectured that, on a non-trivial knot in the $3$--sphere, no pair of Dehn surgeries along distinct slopes are purely cosmetic, that is, none of them yield $3$--manifolds those are orientation-preservingly homeomorphic. 
In this paper, we show that alternating knots having reduced alternating diagram with the twist number at least 7, Montesinos knots of length at least 5, and alternating Montesinos knots of length at least 4 do not admit purely cosmetic surgeries. 
As a corollary, we see that large alternating Montesinos knots have no purely cosmetic surgeries. 
\end{abstract}

\maketitle

\section{Introduction}

For a knot $K$ in the 3--sphere $S^3$, the following operation is called a \textit{Dehn surgery}: 
take the exterior $E(K)$ of $K$ and glue a solid torus onto the torus $\partial E(K)$. 
The \textit{surgery slope} of the Dehn surgery on $\partial E(K)$ is represented by the curve identified with the meridian of the attached solid torus.

When we consider a Dehn surgery as an operation to create a variety of 3--manifolds from a given knot, the next should be one of the fundamental conjectures in the study of Dehn surgery. 
The \emph{Purely Cosmetic Surgery Conjecture} (PCSC for short) \cite{StipsiczSzabo}: 
Dehn surgeries along two distinct slopes on a non-trivial knot in $S^3$ do not yield $3$--manifolds those are orientation-preservingly homeomorphic. 
Such Dehn surgeries are called \emph{purely cosmetic}, and have been studied extensively. 
(See \cite[Problem 1.81(A)]{Kirby} for its generalization to general 3-manifolds and further information.) 
In fact, for several infinite families of knots, PCSC has been confirmed; 
for knots of genus one~\cite{Wang}, cable knots~\cite{TaoCable}, two-bridge knots~\cite{IchiharaJongMattmanSaito}, alternating fibered knots~\cite{IchiharaJongMattmanSaito}, pretzel knots~\cite{StipsiczSzabo}, 3-braid knots~\cite{Varvarezos21}, and composite knots~\cite{TaoComposite}. 

In this paper, 
we focus on alternating knots and Montesinos knots as an enlargement of the previous researches~\cite{IchiharaJongMattmanSaito,StipsiczSzabo}. 
A knot in $S^3$ is called \emph{alternating} if it admits a diagram with alternatively arranged over- and under-crossings running along it.
Our first result is the following. 

\begin{theorem}\label{thm:alt}
If a non-trivial alternating knot $K$ admits purely cosmetic surgeries, then $K$ is equivalent to one of the knots depicted in Figure~\ref{fig:alt}.
Here $a, b, c, d, e, f$ are non-negative integers.
\end{theorem}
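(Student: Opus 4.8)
The plan is to reduce the classification to two Vassiliev-invariant equations and then to analyze reduced alternating diagrams combinatorially. The entry point is the finite type obstruction to purely cosmetic surgeries: by Boyer and Lines, the change of the Casson--Walker invariant under surgery forces $\Delta_K''(1)=0$, equivalently $a_2(K)=0$, where $a_2$ is the second coefficient of the Conway polynomial; and by the Jones polynomial argument of Ichihara and Wu one further obtains $v_3(K)=0$, where $v_3$ is the degree three Vassiliev invariant. (The slope restriction of Ni and Wu guarantees these are the relevant constraints.) Hence it suffices to prove that a non-trivial alternating knot with $a_2(K)=v_3(K)=0$ is one of the knots in Figure~\ref{fig:alt}.

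I would then install explicit diagrammatic formulas for $a_2$ and $v_3$ on a reduced alternating diagram $D$ of $K$. Via Gauss-diagram (Polyak--Viro) formulas these invariants are signed counts over configurations of two and three crossings of $D$. The reduced alternating hypothesis lets me organize $D$ into maximal twist regions; collapsing each region to a single edge produces a $4$-valent \emph{template} carrying integer twist multiplicities, and by the Tait flyping theorem this decorated template is a genuine invariant of $K$. Because the alternating condition fixes all crossing signs inside a twist region, I can rewrite $a_2(D)$ and $v_3(D)$ as polynomials in the twist multiplicities whose self-contributions from each region are sign-definite and monotone in the multiplicity; the signature of $K$, read off from the checkerboard (Goeritz) form of $D$, serves as a convenient auxiliary invariant for keeping track of these signs.

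The crux is to bound the template. The monotonicity above shows that the leading terms of $a_2(D)$ and $v_3(D)$ grow with the number of twist regions, so the system $a_2=0$, $v_3=0$ is solvable only when the twist number $t(D)$ is small; pushing this accounting through yields $t(D)\le 6$, which is precisely the estimate that excludes every reduced alternating diagram of twist number at least $7$. Making the growth and cancellation estimates tight enough to eliminate all templates with seven or more twist regions, while correctly locating the genuine cancellations among regions of opposite sign, is the principal obstacle of the argument.

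Finally I would enumerate, up to flype equivalence and the symmetries of alternating diagrams, the finitely many templates with at most six twist regions, substitute the multiplicities $a,b,c,d,e,f$, and solve $a_2=v_3=0$ in each case. The configurations that survive are exactly the family drawn in Figure~\ref{fig:alt}, which completes the classification. The remaining work is an extensive but routine case check once the template list and the two invariant formulas are in place; the genuine difficulty lies entirely in the growth and sign analysis that caps the twist number.
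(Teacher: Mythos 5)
Your entry point is fine---the reduction to $a_2(K)=0$ and $v_3(K)=0$ is exactly the paper's Lemma~\ref{lem:IchiharaWu}/\ref{lem:IchiharaWu2}---but from there your plan tries to prove a statement strictly stronger than the theorem, namely that \emph{every} non-trivial alternating knot with $a_2=v_3=0$ lies in Figure~\ref{fig:alt}, and that stronger statement is false. Both $a_2$ and $v_3$ are additive under connected sum, and connected sums of alternating knots are alternating, so one can cancel: the paper itself remarks that $K(a,a+1,0,0,a+1,0)$ in the left side of Figure~\ref{fig:alt} has $a_2=w_3=0$ for every $a\ge 0$, and the connected sum of two (or more) such knots is an alternating knot with $a_2=v_3=0$ whose reduced alternating diagram has twist number far exceeding $6$, yet which is not in Figure~\ref{fig:alt}. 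This kills your central claim that the system $a_2=v_3=0$ forces $t(D)\le 6$; no such bound exists. The mechanism you propose for the bound is also wrong on its own terms: contributions of twist regions to $a_2$ and $v_3$ in an alternating diagram are \emph{not} sign-definite, since alternating diagrams carry crossings of both signs (compare the trefoil, $a_2=1$, with the figure-eight, $a_2=-1$, both alternating). With many twist regions you have many free multiplicities and only two polynomial equations, so solutions proliferate rather than disappear.

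The paper's proof runs through three inputs that your outline never invokes and that supply the finiteness your template analysis lacks. First, composite knots admit no purely cosmetic surgeries (Tao), so $K$ is prime---this is what disposes of the counterexamples above, and your proposal has no substitute for it. Second, Hanselman's Heegaard Floer theorem (PCSC holds when $g>2$ and $th<6$), combined with $th=0$ for alternating knots and Wang's genus-one result, forces $g(K)=2$. Third, the Ni--Wu obstruction $\tau(K)=0$ together with $\tau=-\sigma/2$ for alternating knots forces $\sigma(K)=0$; in your sketch the signature appears only as bookkeeping, not as a constraint. With primeness, $g=2$, and $\sigma=0$ in hand, the paper appeals to Stoimenow's classification: such knots arise from the generator diagrams $6_3$, $7_7$, $8_{12}$, $9_{41}$, $10_{58}$, $12_{1202}$ by $\overline{t_{2}^\prime}$~moves, and only \emph{then} do the finite type invariants enter---$a_2\ne 0$ (read off from Jong's Alexander polynomial tables) eliminates the $8_{12}$, $10_{58}$, $12_{1202}$ families, and an explicit computation of $a_2$ and $w_3$ (Claims~\ref{clm:a2w3} and \ref{clm:o2}) eliminates the $7_7$ family, leaving exactly the $6_3$ and $9_{41}$ families of Figure~\ref{fig:alt}. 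Without a replacement for the genus and signature constraints, the finitely-many-templates step of your program never gets off the ground.
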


\begin{figure}
	\centering
	\begin{overpic}[width=.8\textwidth]{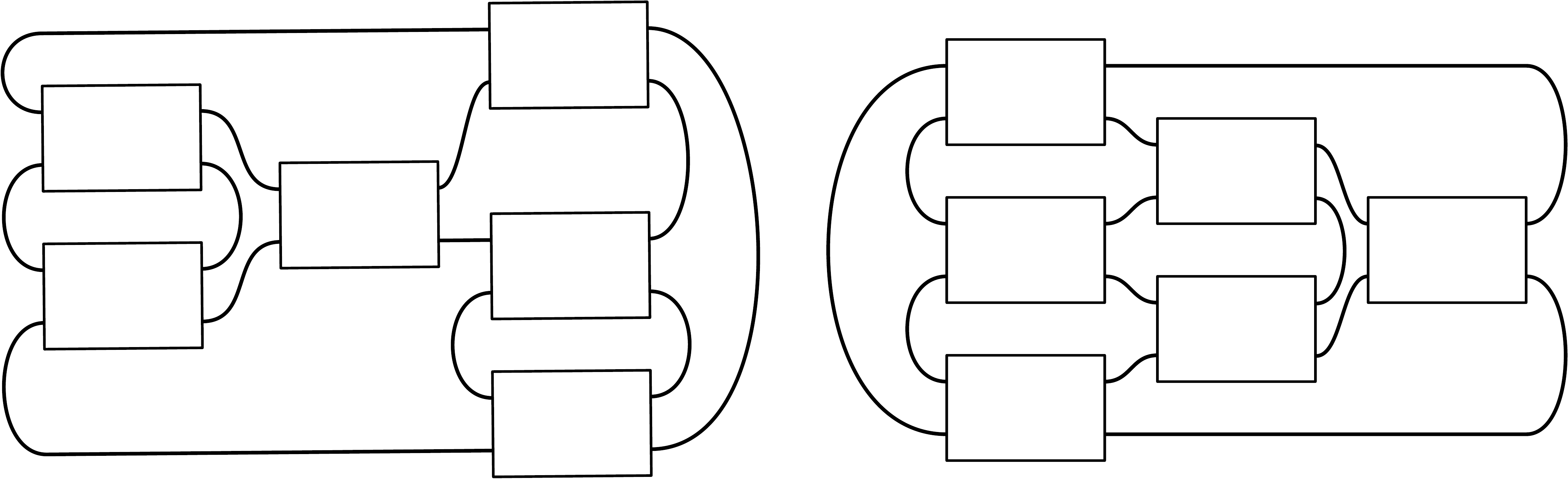}
		\put(4,20.8){$2c+1$}
		\put(4,10.5){$2b+1$}
		\put(19.1,16){$2a+1$}
		\put(31.3,26){$-2d-1$}
		\put(31.4,12.5){$-2e-1$}
		\put(31.4,2.5){$-2f-1$}
		\put(60.4,23.8){$-2a-2$}
		\put(60.4,13.5){$-2b-2$}
		\put(60.4,3.5){$-2c-2$}
		\put(75.1,18.7){$2d+1$}
		\put(75.1,8.7){$2e+1$}
		\put(88.5,13.7){$2f+1$}
	\end{overpic}
	\caption{Alternating knots in Theorem~\ref{thm:alt}.}
	\label{fig:alt}
\end{figure}
\begin{figure}[!htb]
	\centering
	\begin{overpic}[width=.6\textwidth]{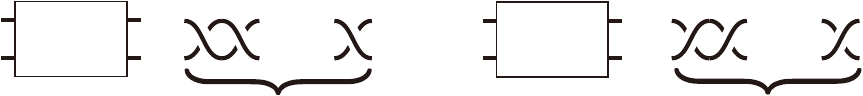}
		\put(7,5.5){$n$} 
		\put(17,5.5){$=$} 
		\put(32,5.5){$\cdots$} 
		\put(63,5.5){$n$} 
		\put(73,5.5){$=$} 
		\put(88.5,5.5){$\cdots$} 
		\put(15,-3){{\small $n$ times right-handed}}
		\put(15,-7){{\small half twists when $n>0$}}
		\put(70,-3){{\small $-n$ times left-handed}}
		\put(70,-7){{\small half twists when $n<0$}}
	\end{overpic}
	\bigskip\bigskip
	\caption{The right-handed (resp.\ left-handed) twists when $n >0$ (resp.\ $n<0$).} 
	\label{fig:twists}
\end{figure}

We call each box in Figure~\ref{fig:alt} a \emph{twist box} which indicates two-strand twists as shown in Figure~\ref{fig:twists}.
In general, the \emph{twist number} of a link diagram $D$ 
is defined as the number of \emph{twists}, which are either maximal connected collections of bigon regions in $D$ arranged in a row or isolated crossings adjacent to no bigon regions.
Then we have the following corollary. 

\begin{corollary}\label{cor:alt}
	Alternating knots having reduced alternating diagram with the twist number at least 7 do not admit purely cosmetic surgeries. 
\end{corollary}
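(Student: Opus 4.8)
The plan is to prove the contrapositive and feed it into Theorem~\ref{thm:alt}, using the fact that the twist number of a reduced alternating diagram is an invariant of the underlying knot. Suppose a non-trivial alternating knot $K$ admits purely cosmetic surgeries; I want to show that every reduced alternating diagram of $K$ has twist number at most $6$, which by contraposition yields the corollary. By Theorem~\ref{thm:alt}, such a $K$ is equivalent to one of the knots displayed in Figure~\ref{fig:alt} for some non-negative integers $a,b,c,d,e,f$, so it suffices to bound the twist number of these particular knots.

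First I would read the twist number directly off the diagrams in Figure~\ref{fig:alt}. Each depicted diagram is assembled from six twist boxes, and since every twisting parameter ($2a+1$, $-2d-1$, $-2a-2$, $2f+1$, and so on) is nonzero for all non-negative $a,\dots,f$, each box carries at least one crossing and hence forms exactly one twist in the sense of Corollary~\ref{cor:alt}, namely a maximal row of bigons, or an isolated crossing in the $\pm 1$ case. Because the boxes are arranged in the Montesinos/pretzel pattern — they share tangle endpoints but no two are joined collinearly along a common pair of strands — no two of them are bigon-adjacent, so they never merge into a single twist region. Thus each diagram in Figure~\ref{fig:alt} is a reduced alternating diagram with exactly six twist regions, and its twist number equals $6$.

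To transfer this bound from the specific diagram in Figure~\ref{fig:alt} to an arbitrary reduced alternating diagram of $K$, I would invoke the invariance of the twist number among reduced alternating diagrams of a fixed alternating knot. This follows from the solution of the Tait flyping conjecture by Menasco and Thistlethwaite: any two reduced alternating diagrams of a knot are related by a finite sequence of flypes, and a flype carries twist regions bijectively to twist regions and therefore preserves the twist number. Consequently every reduced alternating diagram of $K$ has twist number $6 \le 6$, which contradicts the standing assumption that the given diagram has twist number at least $7$. This contradiction establishes Corollary~\ref{cor:alt}.

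The main obstacle here is not the Dehn surgery theory, which is entirely packaged inside Theorem~\ref{thm:alt}, but the bookkeeping that makes the count legitimate: one must confirm that the diagrams in Figure~\ref{fig:alt} are genuinely reduced and alternating for every admissible choice of parameters, and that distinct twist boxes are never bigon-adjacent, so that the $6$ twist boxes really do account for all twist regions. I would also note that even if some nugatory crossing appeared for special parameter values, reducing it could only lower the twist number, leaving the bound $\le 6$ intact; the genuinely essential input is the flype-invariance of the twist number, which is what lets the bound computed on the figure's diagram propagate to the hypothesized diagram of twist number at least $7$.
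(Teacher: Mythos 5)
Your proof is correct and is exactly the argument the paper leaves implicit: by Theorem~\ref{thm:alt} any alternating knot with purely cosmetic surgeries is one of the knots in Figure~\ref{fig:alt}, whose reduced alternating diagrams have precisely six twist regions, and the Menasco--Thistlethwaite flyping theorem (flypes preserving twist regions) transfers the bound $\le 6$ to every reduced alternating diagram of the knot. Your added checks --- that the six boxes are pairwise non-bigon-adjacent so they do not merge, and that the diagrams are reduced for all admissible parameters --- are the right bookkeeping and match what the paper tacitly assumes.
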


Next we consider Montesinos knots. 
A \emph{Montesinos knot} is a knot obtained by closing a tangle, called a Montesinos tangle, which is obtained by putting rational tangles together in a horizontal way. 
The next is our second result. 

\begin{theorem}\label{thm:Montesinos}
If a non-trivial Montesinos knot $K$ admits purely cosmetic surgeries, then $K$ is equivalent to one of the following.
\begin{itemize}
\item[(o1)] 
$M\left(\dfrac{2b}{4ab+2b-1}, \dfrac{1}{2c+1}, \dfrac{1}{2d+1}, \dfrac{1}{2e+1} \right)$ \quad $(a,b,c,d \in \mathbb{Z} \setminus \{0\} \text{ and } a,c,d \ne -1)$
\item[(o2)] 
$M\left(\dfrac{2b}{4ab+2b-1}, \dfrac{2d}{4cd+2d-1}, \dfrac{1}{2e+1} \right)$  \quad $(a,b,c,d,e \in \mathbb{Z} \setminus \{0\}  \text{ and }  a,c,e \ne -1)$
\item[(o3)] 
$M\left(\dfrac{3}{6a \mp 1}, \dfrac{1}{2b+1} , \dfrac{1}{2c+1} \right)$ \quad $(a,b,c \in \mathbb{Z} \setminus \{0\}  \text{ and }  b,c \ne -1)$
\item[(o4)] 
$M\left(\dfrac{4b +2 \mp 1}{8ab +4a \mp 2a \mp 2b \mp 1}, \dfrac{1}{2c+1}, \dfrac{1}{2d+1} \right)$  \quad $(a,b,c,d \in \mathbb{Z} \setminus \{0\} \text{ and }  b,c,d \ne -1)$
\item[(o5)] 
$M\left(\dfrac{4bc-1}{8abc + 4bc -2a -2c -1}, \dfrac{1}{2d+1} ,\dfrac{1}{2e+1} \right)$  \quad $(a,b,c,d,e \in \mathbb{Z} \setminus \{0\} \text{ and }  a,d,e \ne -1)$
\item[(e1)] 
$M\left(\dfrac{1}{2a}, \dfrac{2c}{4bc-1}, \dfrac{2e}{4de-1} \right)$ \quad $(a,b,c,d,e \in \mathbb{Z} \setminus \{0\})$
\end{itemize} 
\end{theorem}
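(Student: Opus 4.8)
The plan is to intersect the known necessary conditions for admitting purely cosmetic surgeries with explicit diagrammatic computations for Montesinos knots, and then to read off the surviving families. The conditions I will use are: if $K$ admits purely cosmetic surgeries, then by Ni--Wu the two surgery slopes have the form $r$ and $-r$; by Boyer--Lines (via the Casson--Walker invariant) the second coefficient of the Conway polynomial vanishes, $a_2(K)=0$; and by Hanselman the Seifert genus satisfies $g(K)\le 2$, so that, since Wang has settled the genus-one case, we may assume $g(K)=2$ (with slopes $\pm 2$). The theorem therefore reduces to the purely knot-theoretic problem of classifying all Montesinos knots $K$ with $g(K)=2$ and $a_2(K)=0$.

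First I would fix the normal form $K=M(\beta_1/\alpha_1,\dots,\beta_n/\alpha_n)$ and expand each tangle fraction as a continued fraction. The parity of the $\alpha_i$ governs how the Montesinos tangle closes up, and the requirement that $K$ be a knot produces exactly the two regimes visible in the statement: all $\alpha_i$ odd, yielding the families (o1)--(o5), and a single even $\alpha_i$, yielding (e1). Using the Seifert surface built from the Montesinos presentation I would convert the hypothesis $g(K)=2$ into a bound on the number of tangles $n$ and on the continued-fraction lengths of the $\beta_i/\alpha_i$: the genus grows with both the number of essential tangles and the lengths of their continued-fraction expansions, so $g(K)=2$ leaves only finitely many ``shapes,'' each an infinite family parametrized by the twisting amounts inside the surviving twist regions. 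Signature computations enter here as a computational aid, and in particular for the alternating members the relevant Heegaard--Floer data are pinned down by $\Delta_K$ and $\sigma(K)$, which makes the genus and slope constraints explicit.

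The computational core is a closed formula for $a_2(M(\beta_1/\alpha_1,\dots,\beta_n/\alpha_n))$ in terms of the tangle fractions. I would derive it through the Conway polynomial and tangle calculus: to each rational tangle one attaches the Conway polynomials of its numerator and denominator closures, these combine under tangle addition in a bilinear fashion, and extracting the $z^2$-coefficient of the closure of $T_1+\dots+T_n$ expresses $a_2(K)$ as an explicit integer-valued expression in the $\alpha_i$ and $\beta_i$. With the genus bound restricting $n$ and the continued-fraction lengths, I would then substitute each surviving shape, set $a_2(K)=0$, and solve the resulting Diophantine equations case by case. After discarding redundancies coming from flype moves and from the action on tangle fractions, the solution set should be precisely (o1)--(o5) and (e1); in particular no solution has $n\ge 5$, which is what yields the nonexistence statement in that range.

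I expect the main obstacle to be the exhaustiveness of the final step rather than any single computation. Deriving the $a_2$-formula and the genus bound uniformly across all parity cases and all sign choices for the twist boxes is already delicate — the alternatives $6a\mp 1$, $4b+2\mp 1$, and so on in the statement record the sign sub-cases that must be tracked separately — but the genuinely hard part is the Diophantine analysis: one must enumerate \emph{all} integer solutions of $a_2(K)=0$ within each surviving shape, confirm that nothing outside the six families survives, and verify that distinct parameter values do not secretly describe the same knot. Organizing this case analysis so that it is simultaneously complete and non-redundant is the crux of the argument.
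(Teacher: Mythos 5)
Your opening reduction is half right, but the core of your plan aims at the wrong statement, and it omits two inputs the paper cannot do without. The families (o1)--(o5), (e1) are \emph{not} the solution set of $a_2(K)=0$: generic members have $a_2\neq 0$ (for (o1), for example, $a_2=(c+1)(d+1)(e+1)-cde+b(a+c+d+e+2)$, as computed in the appendix). What the theorem asserts, and what the paper proves, is membership in the full list of non-pretzel, non-two-bridge Montesinos knots of genus two (Proposition~\ref{prop:Monteg2}), obtained from the Hirasawa--Murasugi genus formulas \cite{HirasawaMurasugi}; the only place $a_2$ enters is to kill the two extra families (e2) and (e3), where $a_2=-2b$ and $a_2=2$ \emph{identically} (Claims~\ref{clm:e2}, \ref{clm:e3}). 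Your sieve also fails in the other direction: you never invoke the known PCSC results for pretzel knots \cite{StipsiczSzabo} and two-bridge knots \cite{IchiharaJongMattmanSaito}, yet nothing in your conditions excludes, say, a genus-two pretzel knot (five odd strands, or three odd strands with $|\gamma|=2$) satisfying the single Diophantine equation $a_2=0$; such knots pass your sieve but are not on the list, so the outcome of your analysis could not be ``precisely (o1)--(o5) and (e1).'' The paper removes these classes by quoting the prior results \emph{before} invoking the enumeration. Note also that the Diophantine step you identify as the crux does not occur in the paper at all: the $a_2$ and $w_3$ formulas for (o1)--(o5), (e1) are relegated to the appendix ``for further studies.''

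Two technical steps are also unjustified as you state them. First, Hanselman's theorem does not give $g(K)\le 2$ by itself; it requires the Heegaard Floer thickness bound $th(K)<6$, without which only the $\pm 1/q$ slopes are constrained for large genus. The paper supplies this via the Turaev genus: $th(K)\le g_T(K)$, with $g_T(K)=0$ for alternating knots and $g_T(K)=1$ for non-alternating Montesinos knots (Lemma~\ref{lem:Hanselman}); your proposal omits this ingredient entirely, and without it the reduction to $g(K)=2$ collapses. Second, your heuristic that the genus ``grows with both the number of essential tangles and the lengths of their continued-fraction expansions'' is false in the even case: in Hirasawa--Murasugi's type (III) the genus carries a correction term $-(p+1)$ depending on the number of leading $\pm 2$'s in the continued fractions, and it is exactly this correction that produces the long-continued-fraction families (e2) (with $\sum m_i=7$) and (e3) (with $\sum m_i=9$) at genus two. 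A naive monotonicity bound would miss precisely these families --- which are the very cases for which the $a_2$ computation is needed. So the genuinely delicate part of the proof is the genus-two enumeration itself, not the vanishing locus of $a_2$.
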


The notation used in the statement above will be explained in Section~\ref{sec:Montg2}.  
The minimal number of rational tangles forming a Montesinos knot is called the \emph{length} of the Montesinos knot. 
The following is an immediate corollary of Theorem~\ref{thm:Montesinos}. 

\begin{corollary}\label{cor:Montesinos}
	Montesinos knots of length at least 5 do not admit purely cosmetic surgeries. 
\end{corollary}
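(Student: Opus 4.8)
The plan is to deduce the corollary directly from Theorem~\ref{thm:Montesinos} by a contrapositive argument, reading off the length of each knot in the exhibited list. Recall that the length of a Montesinos knot is the minimal number of rational tangles needed to build it; in particular, if a Montesinos knot is presented as $M(r_1, \dots, r_n)$ using $n$ rational tangles, then its length is at most $n$.

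First I would inspect the six families (o1)--(e1) in the statement of Theorem~\ref{thm:Montesinos} and simply count the number of rational-tangle slots appearing in each presentation. Family (o1) is written with four slots, while each of (o2), (o3), (o4), (o5), and (e1) is written with exactly three slots. Consequently every knot occurring in the list is presented using at most four rational tangles, so each such knot has length at most $4$.

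Now suppose, for contradiction, that $K$ is a Montesinos knot of length at least $5$ that admits purely cosmetic surgeries. Since a knot of length at least $5$ is in particular non-trivial, Theorem~\ref{thm:Montesinos} forces $K$ to be equivalent to one of (o1)--(e1). But each of these has length at most $4$, contradicting the hypothesis that the length of $K$ is at least $5$. Hence no Montesinos knot of length at least $5$ admits purely cosmetic surgeries, which is the assertion of the corollary.

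The argument is essentially bookkeeping, so I do not anticipate a serious obstacle; the only point requiring minor care is the extremal family (o1), the unique one realizing four slots, where I would stress that no normalization or tangle cancellation is needed because we only require the upper bound $\mathrm{length} \leq 4$, not the exact length. In particular, any slot that happens to become integral for special parameter values (for instance when a factor such as $2e+1$ equals $\pm 1$) only lowers the length further, and so does not affect the bound.
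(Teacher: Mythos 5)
Your argument is correct and is exactly the reasoning the paper intends: the corollary is stated as an immediate consequence of Theorem~\ref{thm:Montesinos}, since every family (o1)--(e1) is presented with at most four rational tangles and hence has length at most $4$. Your extra remarks (length is defined as a minimum, so degenerate slots only help; length $\geq 5$ forces non-triviality) are the right small points of care, and nothing further is needed.
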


Furthermore, based on Theorems~\ref{thm:alt} and \ref{thm:Montesinos}, we obtain the following. 

\begin{theorem}\label{thm:altMontesinos}
	If a non-trivial alternating Montesinos knot $K$ admits purely cosmetic surgeries, then $K$ is equivalent to the knot in the left side of Figure~\ref{fig:alt}, where $a, b, c, d, e, f$ are non-negative integers such that either 
    \begin{itemize}
    \item $c=d=0$ and $e,f \ge 1$, or 
    \item $b,c,e \ge 1$ and $d=f=0$, or 
    \item $a,b,c,d,f \ge 1$ and $e=0$. 
    \end{itemize}
\end{theorem}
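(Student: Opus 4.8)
The plan is to deduce the classification purely by intersecting the two results already in hand. Suppose $K$ is a non-trivial alternating Montesinos knot admitting purely cosmetic surgeries. Since $K$ is alternating, Theorem~\ref{thm:alt} forces $K$ to be one of the knots of Figure~\ref{fig:alt}; since $K$ is Montesinos, Theorem~\ref{thm:Montesinos} forces $K$ into one of the six families (o1)--(o5), (e1). The whole problem thus reduces to deciding which knots can sit in \emph{both} lists at once, and to reading off the resulting constraints on the parameters $a,\dots,f$. In particular, the outcome should land inside the twist-number and length bounds already recorded in Corollaries~\ref{cor:alt} and \ref{cor:Montesinos}, which provides a useful consistency check.

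First I would work on the Montesinos side, where the data are explicit. For each family (o1)--(o5), (e1) I would put the rational tangle slopes $\beta_i/\alpha_i$ into reduced form, normalised so that $0<|\beta_i/\alpha_i|<1$, recording separately the integral parts absorbed into the connecting horizontal twisting. I would then impose the alternating hypothesis through the standard criterion that a Montesinos knot in this normal form is alternating exactly when all the normalised slopes $\beta_i/\alpha_i$ carry the same sign. Feeding this sign condition back into the bare integrality hypotheses of Theorem~\ref{thm:Montesinos} (namely $a,b,\dots\in\Z\setminus\{0\}$ together with the exclusions $a,c,d\neq-1$, etc.) converts them into definite sign requirements on the parameters and discards every sub-range of every family whose tangles cannot be made to agree in sign, since those knots are non-alternating.

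Next I would identify the surviving, genuinely alternating members with the left-hand picture of Figure~\ref{fig:alt}. Concretely, I would redraw each such Montesinos knot as the two-column twist-box diagram of that picture, sorting its rational tangles into the left triple $(2c+1,2b+1,2a+1)$ and the right triple $(-2d-1,-2e-1,-2f-1)$, and then compare continued-fraction data box by box. Because a Montesinos presentation is unique only up to cyclic permutation of the tangles, reversal of the list, and adding integers to individual slopes with a compensating global twist, I would carry out the comparison modulo precisely these moves. The expectation is that each alternating family matches the left-hand knot and pins down exactly which of the parameters $a,\dots,f$ are forced to vanish; the three bulleted cases $\{c=d=0\}$, $\{d=f=0\}$, and $\{e=0\}$ are the three distinct vanishing patterns produced by the alternating families, with the even-type family (e1) and the odd-type families (o$\ast$) distributing themselves among these three cases.

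The step I expect to be the main obstacle is this final identification: faithfully translating the twist-box diagrams of Figure~\ref{fig:alt} into standard Montesinos continued-fraction notation and matching them, across the redundancy of Montesinos presentations, to the explicit fractions of (o1)--(o5) and (e1). The bookkeeping is delicate, since one must track signs and decide which integral parts have been pushed into the connecting twist region, and it is easy to overlook an equivalence or to double-count a knot. A secondary point needing care is ruling out the right-hand knot of Figure~\ref{fig:alt}: I would verify that no choice of parameters makes it an alternating Montesinos knot appearing in the list of Theorem~\ref{thm:Montesinos}, either by the same continued-fraction comparison or, more cheaply, by a twist-number and length count against Corollaries~\ref{cor:alt} and \ref{cor:Montesinos}, so that only the left-hand picture survives.
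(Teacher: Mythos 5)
Your overall strategy --- intersect the list of Theorem~\ref{thm:alt} with that of Theorem~\ref{thm:Montesinos} and identify the overlap --- is logically sound as a necessary-condition argument, but it is not what the paper does, and as sketched it is missing the one tool that makes the case analysis close: the signature. The paper does not match continued fractions against Figure~\ref{fig:alt} across all families. Instead it runs through the alternating members of the families of Proposition~\ref{prop:Monteg2} (using Lemma~\ref{lem:LT} to force all tangles to one sign), computes $\sigma(K)$ for each member either by the Traczyk--Lee formula of Lemma~\ref{lem:signature} on an explicit reduced alternating diagram or by Murasugi's banding estimate (Lemma~\ref{lem:signatureBand}), and discards everything with $\sigma \neq 0$, since $\tau(K) = -\sigma(K)/2 \neq 0$ obstructs purely cosmetic surgeries by Equation~\eqref{eq:tau-sig}. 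Only then are the few $\sigma = 0$ survivors identified, by explicit flypes and isotopies, either with the left-hand knot of Figure~\ref{fig:alt} --- giving exactly the three bullet patterns, which come from (o1'), (o4), and (o5) --- or with $7_7$-derived knots, which are killed by Claim~\ref{clm:o2}. In your version all of these exclusions would instead have to come from proving non-equivalence between specific Montesinos knots and the twist-box knots of Figure~\ref{fig:alt}. The moves you list (cyclic permutation, reversal, integer shifts of slopes with a compensating global twist) classify Montesinos \emph{presentations} of a knot already known to be Montesinos; they cannot decide whether a given Figure~\ref{fig:alt} diagram represents a Montesinos knot at all, nor exclude the alternating family members with $\sigma\neq 0$ from the Figure~\ref{fig:alt} list. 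You would need either the flyping theorem or an invariant, and the invariant that does the work is precisely $\sigma$, which never appears in your outline; your final assertion that the three vanishing patterns ``are produced by the alternating families'' is the conclusion, not an argument. It is also off in one detail: (e1) contributes no pattern, since its alternating $\sigma=0$ members are $7_7$-derived and hence never lie in the list of Theorem~\ref{thm:alt}.

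Separately, your ``cheaper'' disposal of the right-hand knot of Figure~\ref{fig:alt} would fail. Those diagrams have twist number $6$, below the bound $7$ of Corollary~\ref{cor:alt}, so that corollary says nothing; and Corollary~\ref{cor:Montesinos} gives nothing either, because a hypothetical Montesinos presentation of a right-hand knot need not have length at least $5$ --- the candidate families all have length $3$ or $4$. Moreover, both corollaries assert the absence of purely cosmetic surgeries, not non-membership in the classification lists, so invoking them to exclude membership conflates two different things. In the paper the right-hand ($9_{41}$-derived) family is excluded automatically: the enumeration of alternating genus-two Montesinos knots with $\sigma = 0$ simply never produces such a knot. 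Without the signature computations your proposal has no mechanism to establish this, so the main obstacle you flag is a genuine gap rather than mere bookkeeping.
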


As a corollary of Theorem~\ref{thm:altMontesinos}, we obtain the following. 

\begin{corollary}\label{cor:altMontesinos}
    Alternating Montesinos knots of length at least 4 admit no purely cosmetic surgeries. 
\end{corollary}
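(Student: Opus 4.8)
The plan is to prove the corollary by contraposition from Theorem~\ref{thm:altMontesinos}, the point being that every knot listed in the conclusion of that theorem is a Montesinos knot of small length. Recall that the length of a Montesinos knot is defined as the minimal number of rational tangles taken over all of its Montesinos presentations, and is therefore an invariant of the knot: if a knot admits one presentation using $m$ rational tangles, then its length is at most $m$. Hence it suffices to exhibit, for the knot on the left-hand side of Figure~\ref{fig:alt} under each of the three parameter regimes, a Montesinos presentation with at most three rational tangles. Granting this, an alternating Montesinos knot of length at least $4$ cannot be isotopic to any knot in the conclusion of Theorem~\ref{thm:altMontesinos}, so by that theorem it cannot admit purely cosmetic surgeries.

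To produce such a presentation I would parse the left-hand diagram of Figure~\ref{fig:alt} directly. The six twist boxes organise into (at most) three rational tangles, each built from a pair of twist parameters via its continued fraction, and these tangles are joined horizontally; this is exactly the Montesinos form, with three rational tangles, so the length is at most $3$ for arbitrary non-negative $a,b,c,d,e,f$. I would then observe that the three regimes of the theorem only make this bound easier to attain: in each case some of the twist boxes carry the value $+1$ or $-1$, i.e.\ a single crossing, so the corresponding continued fractions degenerate and the affected tangles become integral or trivial. Such degenerations can only merge or delete rational tangles, and hence can only decrease the length; in no regime do they create a fourth tangle. Thus in all three cases the knot has length at most $3$.

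The step requiring the most care, and the only place where something could go wrong, is the faithful translation of the pictorial diagram into the algebraic Montesinos form together with the correct accounting of the degenerate $\pm 1$ boxes. One must verify that grouping the twist boxes into three tangles really reproduces the knot of Figure~\ref{fig:alt}, and that when a continued fraction collapses to an integer the resulting tangle is genuinely absorbed as a trivial or integral tangle rather than silently contributing an extra rational tangle. Since we only need an upper bound on the length, no appeal to the classification of Montesinos knots is required: a single three-tangle presentation settles the matter. Once the three-tangle presentation is confirmed in each regime, every knot of Theorem~\ref{thm:altMontesinos} has length at most $3$, so alternating Montesinos knots of length at least $4$ are excluded and therefore admit no purely cosmetic surgeries.
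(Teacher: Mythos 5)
Your overall skeleton is the same as the paper's: the paper also proves the corollary by noting that every knot in the conclusion of Theorem~\ref{thm:altMontesinos} has a Montesinos presentation of length three, so a knot of length at least four cannot appear in the list. But your central claim --- that the left-hand diagram of Figure~\ref{fig:alt} parses into three horizontally summed rational tangles, so that the knot has length at most~$3$ \emph{for arbitrary non-negative} $a,\dots,f$, the three regimes only making this ``easier'' --- is false, and it is exactly the point where the real work lives. The six twist boxes of that diagram do not pair up into three rational tangles: the three right-hand boxes form a pretzel-type unit attached to a second unit on the left, and for generic parameters the knot is not a Montesinos knot at all. Concretely, if $a,\dots,f\ge 1$ the reduced alternating diagram has six twist regions, whereas every genus-two alternating Montesinos knot enumerated in Proposition~\ref{prop:Monteg2} admits a reduced alternating Montesinos diagram with at most five twist regions (e.g.\ $2+1+1+1$ for (o1), $3+1+1$ for (o4), (o5)); since the twist number of reduced alternating diagrams is an invariant by the flyping theorem (this is what underlies Corollary~\ref{cor:alt}), the generic member of the family cannot be Montesinos. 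Equivalently, a presentation $M([\,\cdot,\cdot\,],[\,\cdot,\cdot\,],[\,\cdot,\cdot\,])$ with three length-two odd-type tangles is incompatible with genus two by the Hirasawa--Murasugi genus formula, since then $\sum b^{(i)}=6$. So ``degenerations can only decrease the length'' is the wrong picture: generically there is no three-tangle Montesinos form to degenerate from.

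What actually happens is the reverse: only in the three parameter regimes does the $\pm1$ box allow the non-Montesinos-shaped diagram to collapse, via flypes and continued-fraction identities, into a genuine length-three Montesinos presentation. This collapse must be exhibited regime by regime, and the paper gets it for free from its proof of Theorem~\ref{thm:altMontesinos}: the regime $c=d=0$, $e,f\ge1$ was obtained there as a reparametrization of (o1'), i.e.\ of $M([2a+1,2b],[2c+1],[-1,2d])$ (Figure~\ref{fig:o1-5}); the regime $d=f=0$, $b,c,e\ge1$ as (o4) (Figure~\ref{fig:o4-1}); and the regime $e=0$, $a,b,c,d,f\ge1$ as (o5) --- each visibly of length three. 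So the fix for your argument is not more care in parsing Figure~\ref{fig:alt} generically, but to restrict from the start to the three regimes and cite (or redo) these explicit identifications; your contrapositive framing and your correct use of the length as a minimum over presentations then go through verbatim.
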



A knot is called \textit{small} if its exterior contains no closed essential surfaces, and is said to be \textit{large} if it is not small. 
It is known that if a Montesinos knot is large, then it is of length at least 4. 
See \cite[Corollary 4]{Oertel}. 
Thus the corollary above implies that large alternating Montesinos knots have no purely cosmetic surgeries. 




\subsection*{Organization}
In Section~\ref{sec:obst}, we introduce some knot invariants which can be used to show that a knot admits no purely cosmetic surgeries. 
In Section~\ref{sec:Montg2}, we give a list of non-pretzel and non-two-bridge Montesinos knots of genus two. 
In Section~\ref{sec:proof}, we prove Theorems~\ref{thm:alt}, \ref{thm:Montesinos}, \ref{thm:altMontesinos}.

\section{Obstructions}\label{sec:obst} 

In this section, we introduce knot invariants used to prove Theorems~\ref{thm:alt}, \ref{thm:Montesinos}, and \ref{thm:altMontesinos}. 
These invariants are used to show that a knot admits no purely cosmetic surgeries. 
Hereafter, we assume that a knot $K$ is non-trivial and prime unless otherwise noted.

\subsection{Genus obstruction}
For a knot $K$ in $S^3$, $g(K)$ denotes the genus of $K$, that is, the minimal genus of Seifert surfaces for $K$. 

\begin{lemma}\label{lem:Hanselman}
	Let $K$ be an alternating knot or a Montesinos knot. 
	If $g(K) \ne 2$, then PCSC is true for $K$. 
\end{lemma}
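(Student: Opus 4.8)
The plan is to reduce the statement to two deep external results about knots admitting purely cosmetic surgeries and to assemble them into a complete case analysis on the Seifert genus. Since $K$ is assumed non-trivial, the value $g(K) = 0$ (the unknot) is excluded at the outset, so proving ``$g(K) \ne 2 \Rightarrow$ PCSC'' amounts to ruling out purely cosmetic surgeries in exactly the two ranges $g(K) = 1$ and $g(K) \ge 3$.

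The main input I would use is Hanselman's theorem, derived from the immersed-curve formulation of knot Floer homology: if a knot in $S^3$ admits a purely cosmetic surgery, then $g(K) \le 2$, and in the boundary case $g(K) = 2$ the two surgery slopes are forced to be $\pm 2$. As preliminaries feeding into this, I would first record the classical slope restrictions of Boyer--Lines and Ozsv\'{a}th--Szab\'{o} (via the Casson--Walker invariant, giving $\Delta_K''(1) = 0$) together with Ni--Wu, which show that the two slopes must be of the form $r$ and $-r$; these set up the symmetry that Hanselman's argument exploits. The genus bound $g(K) \le 2$ disposes of every case $g(K) \ge 3$ at once. For the alternating knots in the statement one may additionally note that alternating knots are Floer thin, so that the thickness term appearing in Hanselman's estimates vanishes and the bound becomes especially transparent; for Montesinos knots I would simply invoke the genus bound in its general form, which does not require Floer thinness.

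It then remains only to exclude $g(K) = 1$, and this is exactly Wang's theorem \cite{Wang} that genus-one knots admit no purely cosmetic surgeries. Combining the three inputs, a non-trivial alternating or Montesinos knot carrying purely cosmetic surgeries must have $g(K) = 2$; contrapositively, $g(K) \ne 2$ forces PCSC for $K$. I expect the only genuine difficulty here to be organizational rather than computational: all of the analytic weight is externalized into Hanselman's Floer-theoretic genus bound and into Wang's genus-one result, and the role of the lemma is to recognize that these two theorems together cover the entire complement of $g(K) = 2$ for a non-trivial knot. Indeed, I anticipate that the hypothesis of $K$ being alternating or Montesinos is not actually needed for this particular genus reduction, and is stated only to fit the setting in which the lemma will later be applied.
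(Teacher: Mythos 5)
Your reduction of the genus-one case to Wang's theorem \cite{Wang} matches the paper, but your treatment of the range $g(K) \ge 3$ rests on a misquotation of Hanselman's theorem, and this is a genuine gap. Hanselman \cite{Hanselman} does \emph{not} prove unconditionally that a knot admitting purely cosmetic surgeries has $g(K) \le 2$. His theorem says that if $S^3_r(K) \cong S^3_{r'}(K)$ for $r \ne r'$, then either $\{r,r'\} = \{\pm 2\}$ and $g(K) = 2$, or $\{r,r'\} = \{\pm 1/q\}$ with $q$ bounded above in terms of the Heegaard Floer thickness and the genus, roughly $q \le \bigl(th(K) + 2g(K)\bigr)/\bigl(2g(K)(g(K)-1)\bigr)$. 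For a knot of genus $\ge 3$ with large thickness this leaves open cosmetic surgeries along slopes $\pm 1/q$ for small $q$; the genus bound ``in its general form'' that you invoke for Montesinos knots simply does not exist, and your closing guess that the alternating/Montesinos hypothesis ``is not actually needed'' is exactly backwards --- it is the only place where that hypothesis does any work. What Hanselman's estimate yields is: PCSC holds for $J$ whenever $g(J) > 2$ and $th(J) < 6$ (for then the bound forces $q < 1$), and one must then \emph{prove} the thickness bound for the knots at hand.

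The missing step, which is the actual content of the paper's proof, is the chain $th(K) \le g_T(K)$ (Lowrance \cite{Lowrance}, via the width of knot Floer homology), together with $g_T(K) = 0$ for alternating knots \cite{DasEtAl} and $g_T(K) = 1$ for non-alternating Montesinos knots \cite{AbeJongKishimoto}, where $g_T$ is the Turaev genus. This gives $th(K) \le 1 < 6$ in both cases, and only then does Hanselman's criterion dispose of $g(K) > 2$. You gesture at thinness for alternating knots, but you treat it as an optional simplification rather than a required hypothesis, and for Montesinos knots (which are generally not thin) you supply no thickness bound at all; as written, the proposal proves the lemma only for alternating knots and leaves the Montesinos case open. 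The Boyer--Lines and Ni--Wu slope-symmetry preliminaries you cite are accurate background but are not needed once Hanselman's theorem is stated correctly.
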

\begin{proof}
	Suppose that $g(K) \ne 2$. 
	Since PCSC is true for knots of genus one~\cite{Wang}, we may assume that $g(K) > 2$. 
	Hanselman~\cite{Hanselman} proved that PCSC is true for any knot $J$ in $S^3$ with $g(J) >2$ and $th(J) < 6$. 
	Here $th(J)$ is the Heegaard Floer thickness of $J$. 
	Thus, it suffices to show that $th(K) <6$ when $K$ is alternating or Montesinos. 
	For any knot $J$ in $S^3$, $th(J)\le g_T(J)$ holds~\cite{Lowrance}\footnote{$th(J) -1 = w_{HF}(J)$ holds. For the definition of $w_{HF}$, see \cite{Lowrance}.}, where $g_T(J)$ is the Turaev genus of $J$. 
	If $K$ is alternating, then $g_T(K) = 0$~\cite[Corollary 4.6]{DasEtAl}, and thus, $th(K) =0$ holds. 
	If $K$ is a non-alternating Montesinos knot, 
	then $g_T(K) = 1$~\cite[Corollary 5.4]{AbeJongKishimoto}, and thus, $th(K) \le 1$ holds. 
\end{proof}




\subsection{Finite type invariant obstruction}

We denote by $V_K(t)$ the Jones polynomial of a knot $K$. 
The first author and Wu proved the following as an improvement of a result by Boyer and Lines~\cite[Proposition 5.1]{BoyerLines}. 

\begin{lemma}[{\cite[Theorem 1.1]{IchiharaWu}}]\label{lem:IchiharaWu}
	If a knot $K$ satisfies either $V_K''(1) \ne 0$ or $V_K'''(1) \ne 0$, then PCSC is true for $K$. 
\end{lemma}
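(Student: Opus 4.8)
The plan is to prove the contrapositive: if $K$ admits purely cosmetic surgeries, then both $V_K''(1)=0$ and $V_K'''(1)=0$. First I would invoke the structural result of Ni--Wu: if $S^3_K(p/q)$ and $S^3_K(p/q')$ are orientation-preservingly homeomorphic with $p/q \neq p/q'$, then $p/q' = -p/q$ and $q^2 \equiv -1 \pmod{p}$. Hence one may assume the two slopes are $r=p/q$ and $-r$, so that the surgered manifolds $M_+ = S^3_K(p/q)$ and $M_- = S^3_K(-p/q)$ are orientation-preservingly homeomorphic rational homology spheres. Every subsequent step compares a finite type invariant of $M_+$ with that of $M_-$.

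The first, coarser constraint comes from the Casson--Walker invariant $\lambda$, the degree one finite type invariant of rational homology spheres. Its surgery formula expresses $\lambda(S^3_K(p/q))$ as the sum of a lens space term $\lambda(L(p,q))$ and a knot term proportional to $\tfrac{q}{p}\,\Delta_K''(1)$. Since $\lambda$ reverses sign under orientation reversal and $L(p,-q)=-L(p,q)$, equating $\lambda(M_+)=\lambda(M_-)$ and using the Ni--Wu constraint on $(p,q)$ makes the lens space contributions match while leaving a nonzero multiple of $\Delta_K''(1)$; this forces $\Delta_K''(1)=0$, equivalently $a_2(K)=0$. Translating through the standard identity $V_K''(1) = -3\,\Delta_K''(1)$ gives $V_K''(1)=0$. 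This step is essentially the Boyer--Lines argument \cite{BoyerLines}.

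The finer constraint, which is the genuinely new input, comes from the next finite type invariant of rational homology spheres beyond Casson--Walker, say $\lambda_2$, the second invariant in the LMO/Ohtsuki hierarchy. Its surgery formula along a knot involves the degree three finite type knot invariant $v_3(K)$ together with $a_2(K)$ and lens space data. Since $\lambda_2(-M)=\lambda_2(M)$, it is preserved under orientation reversal, so equating $\lambda_2(M_+)=\lambda_2(M_-)$, substituting $a_2(K)=0$ from the previous step, and again using the $(p,q)$ constraint isolates $v_3(K)=0$. Expressing $v_3$ as a fixed linear combination of $V_K'''(1)$ and $V_K''(1)$ and using $V_K''(1)=0$ then yields $V_K'''(1)=0$, completing the contrapositive.

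The main obstacle is this last step: establishing and correctly applying the surgery formula for $\lambda_2$, and disentangling its lens space contributions from the knot contribution. Unlike the Casson--Walker case, this surgery formula is delicate, and one must check that, under the Ni--Wu constraints on $(p,q)$, the lens space terms coming from $M_+$ and $M_-$ genuinely cancel so that only the $v_3(K)$ term survives. Careful bookkeeping of the orientation behaviour (odd versus even index in the hierarchy) and of the normalization constants relating $v_3(K)$ to $V_K'''(1)$ is where the real care lies; once the surgery formula is in hand, the extraction of $v_3(K)=0$ parallels the degree one computation.
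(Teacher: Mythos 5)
First, a point of comparison: the paper contains no proof of this lemma --- it is quoted verbatim as \cite[Theorem 1.1]{IchiharaWu}, and the only manipulation performed here is its translation into Lemma~\ref{lem:IchiharaWu2} via the identity $w_3(K) = \frac{1}{72}V_K'''(1) + \frac{1}{24}V_K''(1)$. Measured against the proof in the cited reference, your outline is essentially the argument Ichihara--Wu actually give: Ni--Wu's theorem reduces a purely cosmetic pair to $S^3_K(p/q) \cong S^3_K(-p/q)$ with $q^2 \equiv -1 \pmod{p}$; the Casson--Walker comparison in the style of \cite{BoyerLines} then yields $\Delta_K''(1) = 0$, i.e.\ $V_K''(1) = -6a_2(K) = 0$, where the congruence $q^2 \equiv -1 \pmod{p}$ enters by making $L(p,q)$ amphichiral, hence $\lambda(L(p,q)) = 0$; and the second step is carried out with the degree-two invariant $\lambda_2$ of the Ohtsuki/LMO hierarchy, exactly as you propose. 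Your orientation bookkeeping ($\lambda(-M) = -\lambda(M)$ while $\lambda_2(-M) = \lambda_2(M)$) is also correct.

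That said, as a proof your text has a genuine gap, and you name it yourself: the surgery formula for $\lambda_2$ is asserted rather than established, and that formula \emph{is} the theorem --- everything before it is already in \cite{BoyerLines}. What must be supplied is a computation of the degree-two part of the Ohtsuki series of $S^3_K(p/q)$, which Ichihara--Wu extract from the rational surgery formula for the LMO invariant due to Bar-Natan and Lawrence; the upshot is that the difference $\lambda_2(S^3_K(p/q)) - \lambda_2(S^3_K(-p/q))$ reduces, modulo terms carrying a factor of $a_2(K)$, to a nonzero multiple of $(q/p)\,w_3(K)$, so that after the first step ($a_2(K)=0$) one gets $w_3(K)=0$ outright, and then $V_K''(1)=0$ together with $w_3(K) = \frac{1}{72}V_K'''(1) + \frac{1}{24}V_K''(1)$ forces $V_K'''(1)=0$. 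One of your worries also resolves more simply than you expect: the lens-space contributions to $\lambda_2$ cancel automatically, because $L(p,-q) = -L(p,q)$ and $\lambda_2$ is orientation-invariant, so no appeal to $q^2 \equiv -1 \pmod{p}$ is needed in the degree-two step (it is needed only in the Casson--Walker step). Until the $\lambda_2$ surgery formula is proved or precisely cited, however, what you have is a correct plan following the same route as the cited proof, not yet a proof.
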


It is known that for any knot $K$, 
\[V_K''(1) = -6 a_2(K) \] 
holds, where $a_2(K)$ is the second coefficient of the Conway polynomial of $K$. 
Suppose $(K_+, K_-, L_0)$ is a skein triple of knots as depicted in Figure~\ref{fig:SkeinTriple}. 
For any oriented knot, the following skein relation holds: 
\begin{equation}\label{eq:a2skein}
a_2(K_+) -a_2(K_-) = \mathrm{lk} (L_0)\, , 
\end{equation}
where $\mathrm{lk}(L)$ denotes the linking number of a two-component link $L$. 

\begin{figure}[htb]
    \centering
	\begin{overpic}[width=.4\textwidth]{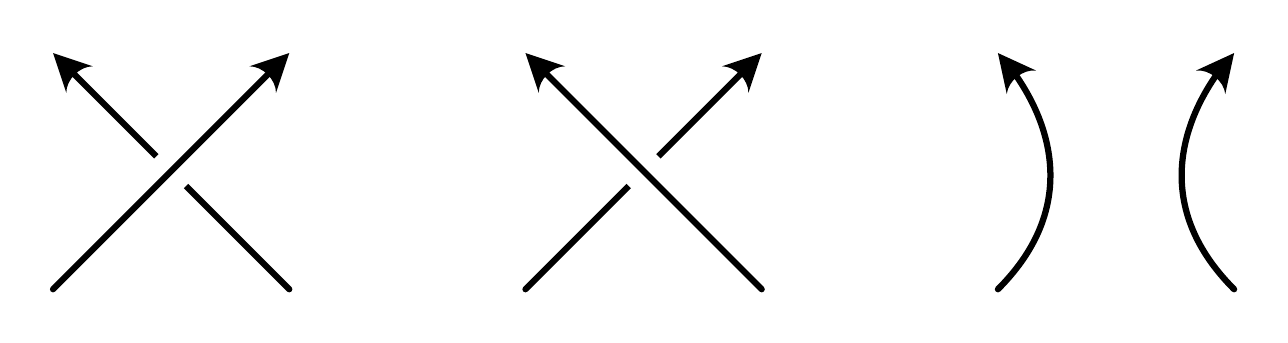}
		\put(9,-3){$K_+$}
		\put(45,-3){$K_-$}
		\put(82,-3){$L_0$}
	\end{overpic}
	\caption{The diagrams of $K_+$, $K_-$, $L_0$ are identical except at one crossing.}
	\label{fig:SkeinTriple}
\end{figure}

In \cite{Lescop}, Lescop defined an invariant $w_3$ for a knot $K$ in a homology sphere $Y$. 
When $Y = S^3$, the knot invariant $w_3$ satisfies the following crossing change formula. 
\begin{equation}\label{eq:w3skein}
	w_3(K_+) - w_3(K_-) = \dfrac{a_2(K') + a_2(K'')}{2} - \dfrac{a_2(K_+) + a_2(K_-) + \mathrm{lk}^2(K',K'')}{4}, 
\end{equation}
where $(K_+, K_-, K \cup K'')$ is the skein triple consisting of two knots $K_{\pm}$ and a two-component link $L_0 = K' \cup K''$ ~\cite[Proposition 7.2]{Lescop}. 

It is shown in \cite[Lemma 2.2]{IchiharaWu} that for any knot $K$ in $S^3$, 
\[ w_3(K) = \dfrac{1}{72}V_K'''(1) + \dfrac{1}{24}V_K''(1) \, . \]
Thus, Lemma~\ref{lem:IchiharaWu} is equivalent to the following. 

\begin{lemma}\label{lem:IchiharaWu2}
	If a knot $K$ satisfies either $a_2(K) \ne 0$ or $w_3(K) \ne 0$, then PCSC is true for $K$. 
\end{lemma}

Let $\mathrm{DT}(2x, 2y)$ be the \emph{double twist knot of type} $(2x, 2y)$ as shown in Figure~\ref{fig:tw}, where $x,y$ are integers. 
We here give formulae used in Section~\ref{sec:proof}. 

\begin{lemma}\label{lem:a2TK}
	$a_2\(\mathrm{DT}(2x,2y) \) = xy$. 
\end{lemma}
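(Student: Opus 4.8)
The plan is to prove the formula by induction using the Conway skein relation \eqref{eq:a2skein}, running the induction on one of the two twist parameters. First I would record the base case: when $x=0$ the twist box with $2x$ crossings is empty, so the two strands passing through it can be pulled apart and $\mathrm{DT}(0,2y)$ is the unknot; hence $a_2(\mathrm{DT}(0,2y))=0=0\cdot y$. By the symmetry between the two twist boxes the same holds when $y=0$.

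For the inductive step I would focus on the twist box carrying the $2x$ crossings and apply the skein relation at one of its crossings. Reversing that crossing flips one of the $2x$ equally signed crossings, and a Reidemeister II move then cancels it against a neighbour, leaving $\mathrm{DT}(2x-2,2y)$. Orienting $\mathrm{DT}(2x,2y)$ and taking the skein triple $(K_+,K_-,L_0)$ at this crossing, I get $K_+=\mathrm{DT}(2x,2y)$ and $K_-=\mathrm{DT}(2x-2,2y)$ (after the simplification), while $L_0$ is the oriented resolution. Then \eqref{eq:a2skein} reads
\[ a_2\!\left(\mathrm{DT}(2x,2y)\right) = a_2\!\left(\mathrm{DT}(2x-2,2y)\right) + \mathrm{lk}(L_0), \]
so with the inductive hypothesis $a_2(\mathrm{DT}(2x-2,2y))=(x-1)y$ it remains to show $\mathrm{lk}(L_0)=y$.

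The crux is this linking-number computation. Smoothing the chosen crossing unclasps the $2x$-box, and I expect the resulting two-component link $L_0$ to be isotopic to the closed $2$-braid coming from the remaining twist box, i.e.\ the $(2,2y)$-torus link, whose linking number is $y$. I would verify this using the orientation conventions of Figure~\ref{fig:twists}: the two strands in the $2y$-box must lie on distinct components, and the $2y$ crossings between them must each contribute with the same sign, so that the signed count gives $\mathrm{lk}(L_0)=y$. Combined with the display above this yields $a_2(\mathrm{DT}(2x,2y))=(x-1)y+y=xy$. The main obstacle is exactly getting these signs right: I must confirm the contribution of each crossing is consistent with the sign conventions for $2x$ and $2y$, and arrange the induction to reduce $|x|$ toward the base case so that the product $xy$ emerges with the correct sign for negative parameters as well.

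Finally, I would note an alternative that bypasses the induction entirely. The knot $\mathrm{DT}(2x,2y)$ bounds an evident genus-one Seifert surface built from two twisted bands, and for a suitable choice of basis its Seifert matrix has the form $\left(\begin{smallmatrix} x & 1 \\ 0 & y\end{smallmatrix}\right)$ (up to the usual conventions and signs). Since for a genus-one knot the second Conway coefficient equals the determinant of the Seifert matrix, a direct computation gives $a_2=xy$. Either route reduces the statement to a short, sign-sensitive bookkeeping, which is where all the real care is needed.
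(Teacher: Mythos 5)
Your proposal is correct and is essentially the paper's own route: the authors omit the proof, deferring to the elementary skein-relation calculation of \cite[Propositions 4.1, 4.4]{IchiharaWu}, which is precisely your induction via Equation~\eqref{eq:a2skein}, unclasping one twist box so that $L_0$ becomes the $(2,2y)$-torus link with linking number $\pm y$ (the sign bookkeeping you flag is the only real content, and it works out to give $xy$ for all signs of $x,y$). Your alternative via the genus-one Seifert matrix $\left(\begin{smallmatrix} x & 1 \\ 0 & y \end{smallmatrix}\right)$, using $a_2 = \det V$ for genus-one knots, is an equally valid shortcut.
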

\begin{lemma}\label{lem:w3TK}
	$w_3(\mathrm{DT}(2x,2y))=\dfrac{xy(x+y)}{4}$. 
\end{lemma}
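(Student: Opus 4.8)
The plan is to prove the identity by induction on $x$ for each fixed integer $y$, feeding the values of $a_2$ from Lemma~\ref{lem:a2TK} into the crossing change formula \eqref{eq:w3skein}. For the base case $x=0$, the twist box labelled $2x$ carries no crossings, so $\mathrm{DT}(0,2y)$ is the unknot and $w_3(\mathrm{DT}(0,2y))=0=\tfrac{0\cdot y(0+y)}{4}$, as required. The recursion I derive below will hold for every integer $x$, so solving it from the base case in both directions settles all integers $x$; this is moreover consistent with the symmetry of the claimed formula under $x\leftrightarrow y$ (matching $\mathrm{DT}(2x,2y)=\mathrm{DT}(2y,2x)$) and with its sign change under $(x,y)\mapsto(-x,-y)$ (matching the mirror image).

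For the inductive step I would choose a crossing $c$ in the twist region carrying $2x$ crossings such that the oriented resolution at $c$ splits the knot into a two-component link $L_0=K'\cup K''$, and form the skein triple $(K_+,K_-,L_0)$ there. A crossing change at $c$ followed by a Reidemeister~II cancellation removes one full twist from that region, relating $\mathrm{DT}(2x,2y)$ to $\mathrm{DT}(2(x-1),2y)$; fixing the sign convention of the twist so that the less-twisted knot is $K_+$, we get $\{K_+,K_-\}=\{\mathrm{DT}(2(x-1),2y),\,\mathrm{DT}(2x,2y)\}$, and Lemma~\ref{lem:a2TK} gives $a_2(K_+)=(x-1)y$ and $a_2(K_-)=xy$. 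The convenient point is that the linking number of the resolution link need not be read off the diagram: the Conway skein relation \eqref{eq:a2skein} yields $\mathrm{lk}(L_0)=a_2(K_+)-a_2(K_-)=\pm y$, hence $\mathrm{lk}^2(K',K'')=y^2$.

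It then remains to evaluate $a_2(K')$ and $a_2(K'')$. Inspecting the resolved diagram, smoothing $c$ separates $\mathrm{DT}(2x,2y)$ into two unknotted circles, their mutual linking being carried by the other twist region, so $a_2(K')=a_2(K'')=0$. Substituting these values into \eqref{eq:w3skein} gives
\begin{equation*}
w_3(\mathrm{DT}(2x,2y))-w_3(\mathrm{DT}(2(x-1),2y))=\frac{y\,(2x-1+y)}{4}.
\end{equation*}
Telescoping from the base case and using $\sum_{k=1}^{x}(2k-1+y)=x^2+xy=x(x+y)$ produces
$w_3(\mathrm{DT}(2x,2y))=\tfrac{y}{4}\,x(x+y)=\tfrac{xy(x+y)}{4}$, completing the induction.

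The step I expect to require the most care is the bookkeeping of orientations and crossing signs. One must confirm that the chosen twist region genuinely splits into a two-component link under the oriented resolution, so that \eqref{eq:w3skein} applies, and that the sign convention for the twist is used consistently throughout, since the opposite choice would flip the overall sign of the answer. The claim that the two resolution components are unknots is geometrically transparent but should be stated explicitly, and the agreement of $\mathrm{lk}(L_0)=\pm y$ with the Conway relation \eqref{eq:a2skein} provides a useful internal check that the skein triple has been set up correctly.
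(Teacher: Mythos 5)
Your proof is correct and is essentially the elementary skein computation the paper has in mind (the paper omits it, citing \cite[Propositions 4.1, 4.4]{IchiharaWu}): your recursion is exactly the paper's twist formula, Lemma~\ref{lem:w3twists}, specialized to $K_0$ the unknot, with $a_2(K_0)=0$, $n=x$, and $\lk=-y$. Your stipulated sign convention (the less-twisted knot is $K_+$) does agree with the paper's conventions---for $n>0$ the crossings in anti-parallel full twists are negative, as noted in the proof of Lemma~\ref{lem:w3twists}---so the overall sign comes out as stated.
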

We omit proofs of Lemmas~\ref{lem:a2TK}, \ref{lem:w3TK} since one can prove them by elementary calculations. 
See \cite[Proposition 4.1, 4.4]{IchiharaWu} for example.

\begin{figure}[htb]
    \centering
	\begin{overpic}[width=.15\textwidth]{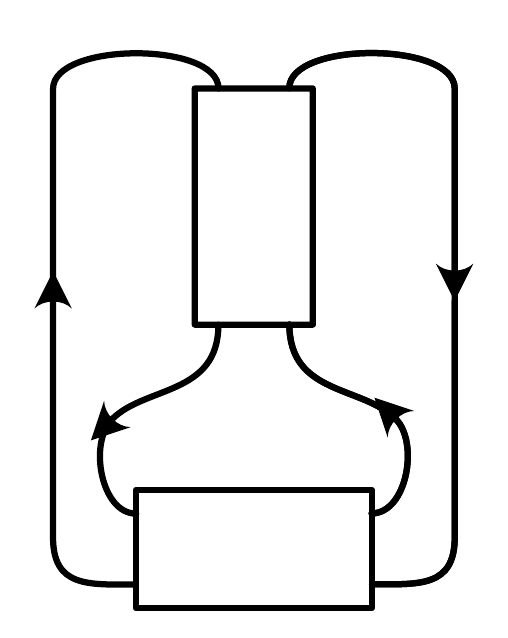}
		\put(34,65){$2x$}
		\put(34,10.5){$2y$}
	\end{overpic}
	\caption{$\mathrm{DT}(2x, 2y)$ called a double twist knot}
	\label{fig:tw}
\end{figure}


\begin{lemma}[A twist formula for $w_3$]\label{lem:w3twists}
	Let $K_n$ be a knot admits a diagram which contains anti-parallel $n$-full twists as in Figure~\ref{fig:AntiParallel}. 
	Suppose that each component of the two-component link $K' \cup K''$ obtained by smoothing a crossing in the $n$-full twists is a trivial knot. 
	Let $\lk = \lk(K',K'')$. 
	Then 
	\begin{equation}\label{eq:TwistFormula}
		w_3(K_n) = w_3(K_0) + \dfrac{n}{2} a_2(K_0) + \dfrac{n}{4} \lk\(\lk -n\) \, . 
	\end{equation}
\end{lemma}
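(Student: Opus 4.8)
The plan is to prove \eqref{eq:TwistFormula} by induction on $n$, comparing $K_j$ with $K_{j-1}$ one full twist at a time via the crossing change formula \eqref{eq:w3skein} and then telescoping. First I would fix orientations on the two anti-parallel strands and single out one crossing $c$ inside the $n$-full twists of the diagram in Figure~\ref{fig:AntiParallel}. Changing $c$ reverses one of the crossings in the twist region, and the reversed crossing then cancels with an adjacent one by a Reidemeister~II move; thus a single crossing change at $c$ converts $K_j$ into $K_{j-1}$. With a suitable choice of which of $K_{j-1},K_j$ plays the role of $K_+$, this exhibits $(K_+,K_-,L_0)=(K_{j-1},K_j,K'\cup K'')$ as a skein triple to which both \eqref{eq:a2skein} and \eqref{eq:w3skein} apply.

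The key geometric step is to identify the oriented resolution $L_0$ at $c$ and to check that it is independent of $j$. Because the two strands are anti-parallel, the oriented (Seifert) smoothing at $c$ turns the two strands back on themselves; the remaining crossings of the twist region then become self-crossings of the two resulting components, i.e.\ Reidemeister~I kinks, and unwind. Hence the smoothing yields the \emph{same} two-component link $K'\cup K''$ for every $j$, with both components unknotted by hypothesis and with $\lk(K',K'')=\lk$ constant in $j$. This is the step I expect to be the main obstacle: one must argue carefully, with fixed orientation conventions, that the smoothing genuinely kills all the remaining twisting (so that $\lk$ is independent of $j$ and of which crossing is smoothed) and must pin down the signs so that \eqref{eq:a2skein} and \eqref{eq:w3skein} are applied in the correct direction.

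Granting this, I would first apply \eqref{eq:a2skein} to the triple $(K_{j-1},K_j,K'\cup K'')$. Since $\lk(L_0)=\lk$ is the same at every stage, $a_2$ is linear in the twist parameter, $a_2(K_j)=a_2(K_0)-j\,\lk$, the sign being fixed by the orientation convention above and checked for consistency against Lemmas~\ref{lem:a2TK} and \ref{lem:w3TK} on double twist knots. Next I would feed this into \eqref{eq:w3skein}. Because $K'$ and $K''$ are trivial we have $a_2(K')=a_2(K'')=0$, so the formula collapses to
\[
w_3(K_j)-w_3(K_{j-1})=\frac{a_2(K_{j-1})+a_2(K_j)+\lk^2}{4}=\frac12 a_2(K_0)+\frac{\lk^2-(2j-1)\lk}{4}.
\]
Finally, summing this telescoping identity for $j=1,\dots,n$ and using $\sum_{j=1}^{n}(2j-1)=n^2$ gives
\[
w_3(K_n)=w_3(K_0)+\frac{n}{2}a_2(K_0)+\frac{n}{4}\lk(\lk-n),
\]
which is \eqref{eq:TwistFormula}. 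The base case $n=0$ is immediate, and the case $n<0$ follows by running the same crossing-change argument in the opposite direction.
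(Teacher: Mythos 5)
Your proposal is correct and takes essentially the same route as the paper's proof: a telescoping application of the crossing change formula \eqref{eq:w3skein}, specialized via $a_2(K')=a_2(K'')=0$, combined with the linearity $a_2(K_j)=a_2(K_0)-j\lk$ from \eqref{eq:a2skein}, with the $n<0$ case deferred to symmetry. The only differences are cosmetic: you substitute the $a_2$-values into each increment before summing (via $\sum_{j=1}^{n}(2j-1)=n^2$) whereas the paper sums first and then substitutes, and you make explicit the geometric point, left implicit in the paper, that the smoothed link $K'\cup K''$ and hence $\lk$ are independent of $j$.
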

\begin{proof}
	By the assumption that $K'$ and $K''$ are trivial, $a_2(K') = a_2(K'') = 0$. 
	Then, by Equation~\eqref{eq:w3skein}, 
	\begin{equation}\label{eq:w3skeinU}
		w_3(K_+) - w_3(K_-) = -\dfrac14\(a_2(K_+) + a_2(K_-) + \mathrm{lk}^2(K',K'')\). 
	\end{equation}
	Assume that $n \ge 0$. 
	Then the crossings in the $n$-full twists are negative. 
	Applying Equation~\eqref{eq:w3skeinU} repeatedly, we have 
	\begin{align}
		w_3(K_n) 
		&= w_3(K_{n-1}) + \dfrac{1}{4}\( a_2(K_n) + a_2(K_{n-1}) + \lk^2 \) \notag \\ 
		&= w_3(K_{n-2}) + \dfrac{1}{4}\( a_2(K_{n-1}) + a_2(K_{n-2}) + \lk^2 \) + \dfrac{1}{4}\( a_2(K_n) + a_2(K_{n-1}) + \lk^2 \) \notag \\ 
		&= \cdots \notag \\
		&= w_3(K_0) + \dfrac14 \sum_{k=0}^{n-1} \( a_2(K_{k+1}) + a_2(K_{k}) + \mathrm{lk}^2 \) \notag \\ 
		&= w_3(K_0) + \dfrac12 \sum_{k=1}^{n-1} a_2(K_k) + \dfrac14\( a_2(K_{0}) + a_2(K_{n}) \) + \dfrac{n}{4}\lk^2 .  \label{eq:lemTF}
	\end{align}
	By Equation~\eqref{eq:a2skein}, $a_2(K_k) = a_2(K_0) - k\lk$. Then RHS of Equation \eqref{eq:lemTF} equals 
	\begin{align*}
		&w_3(K_0) + \dfrac12 \sum_{k=1}^{n-1} \(a_2(K_0) - k\lk\) + \dfrac14\( a_2(K_{0}) + a_2(K_0) - n\lk \) + \dfrac{n}{4}\lk^2 \\ 
		&= w_3(K_0) + \dfrac{n-1}{2} a_2(K_0) - \dfrac{(n-1)n}{4}\lk + \dfrac14 a_2(K_{0}) + \dfrac14 a_2(K_{0}) - \dfrac{n}{4}\lk  + \dfrac{n}{4}\lk^2 \\ 
		&= w_3(K_0) + \dfrac{n}{2} a_2(K_0) + \dfrac{n}{4}\lk\(\lk -n\) . 
	\end{align*}
	Now we complete the proof in the case where $n \ge 0$. 
	We omit the proof in the case where $n \le 0$ since it is achieved by a similar way. 
\end{proof}

\begin{figure}[htb]
    \centering
	\begin{overpic}[width=.45\textwidth]{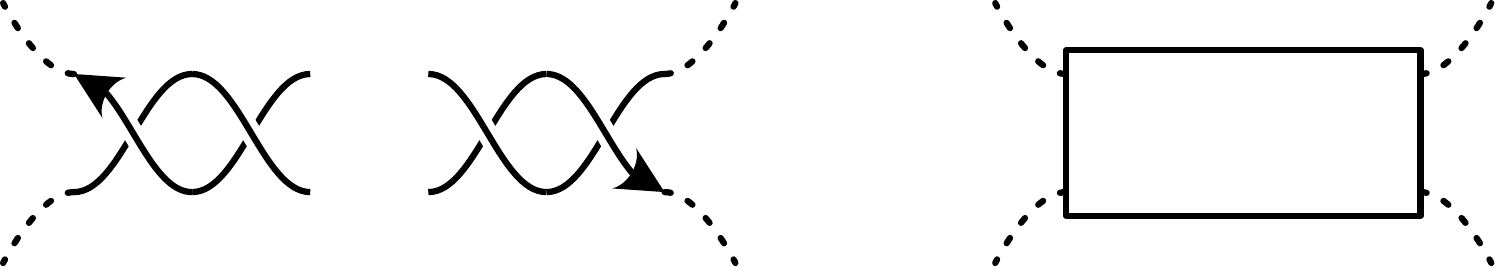}
		\put(21.5,7){$\cdots$}
		\put(56,7){$=$}
		\put(80,7){$2n$}
	\end{overpic}
	\caption{Anti-parallel $n$-full twists}
	\label{fig:AntiParallel}
\end{figure}

\subsection{$\tau$ invariant obstruction}

Another known class of knots having no purely cosmetic surgeries are that of knots with $\tau (K) \ne 0$~\cite[Theorem 1.2 (c)]{NiWu}, 
where $\tau$ is the concordance invariant defined by Ozs\'vath-Szab\'o and Rasmussen using Floer homology. 
It is known that if $K$ is alternating, then 
\begin{equation}\label{eq:tau-sig}
	\tau(K) = -\sigma(K)/2\, ,  
\end{equation}
where $\sigma(K)$ is the signature of $K$. 
Thus, for alternating knots, the signature is an obstruction to admit purely cosmetic surgeries. 
The signature of an alternating knot can be calculated diagramatically due to the following lemma. 

\begin{lemma}[{\cite[Proposition 3.11]{Lee}, \cite[Theorem 2(1)]{Traczyk}}]\label{lem:signature}
	For an oriented non-split alternating link $L$ and a reduced alternating diagram $D$ of $L$, we have 
	\begin{equation}\label{eq:Sig}
		\sigma(L) = o(D) - y(D) - 1\, . 
	\end{equation}
	Here $o(D)$ is the number of components of the diagram obtained by 0-resolutions of pattern $A$ (see Figure~\ref{fig:0-resolution}) at all the crossings of $D$, and $y(D)$ is the number of positive crossings of $D$. 
\end{lemma}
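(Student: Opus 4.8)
The plan is to reduce the computation of $\sigma(L)$ to a signed count of crossings by exploiting the checkerboard structure of the reduced alternating diagram $D$ together with the Gordon--Litherland formula. First I would fix a checkerboard coloring of $D$, dividing the plane into black and white regions. The uniform smoothing prescribed by the ``$0$-resolution of pattern $A$'' is exactly the smoothing compatible with one of the two checkerboard spanning surfaces $F$, and the loops obtained by resolving every crossing are the circles surrounding the regions of one fixed color; thus $o(D)$ equals the number of regions of that color. The surface $F$ carries the Gordon--Litherland symmetric form $\mathcal{G}_F$ on $H_1(F)$, represented by the Goeritz matrix $G_F$, and the Gordon--Litherland theorem gives
\[
  \sigma(L) = \operatorname{sig}(G_F) - \mu(F),
\]
where $\mu(F)$ is the correction term recording the normal Euler number of $F$, that is, a signed count of the crossings that are incoherent with respect to the chosen orientation.

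The key structural input, which I expect to be the clean part, is the definiteness of $G_F$. The Goeritz matrix of $D$ is, up to sign, the reduced Laplacian of the Tait graph associated with the chosen color, with one edge weight $\eta(c) = \pm 1$ for every crossing $c$. The defining feature of an alternating diagram is precisely that, for a checkerboard coloring, all of these signs $\eta(c)$ coincide; hence $G_F$ is (after deleting one redundant row and column) the reduced Laplacian of a connected graph with edge weights of a single sign, and such a matrix is definite. Reducedness guarantees that there are no nugatory crossings, so that $o(D)$ is the honest number of regions and the surface is essential. With the color and orientation conventions chosen appropriately, $G_F$ is an $(o(D)-1)\times(o(D)-1)$ positive definite matrix, so that $\operatorname{sig}(G_F) = o(D)-1$.

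It then remains to evaluate the correction term. I would classify each crossing of $D$ by its type (type I or type II) relative to the orientation and to $F$: for an alternating diagram the type is governed by the sign of the crossing, and a direct local check at the two possible configurations shows that the incoherent crossings contributing to $\mu(F)$ are exactly the positive ones, so that $\mu(F) = y(D)$. Substituting into the Gordon--Litherland formula yields
\[
  \sigma(L) = \bigl(o(D) - 1\bigr) - y(D) = o(D) - y(D) - 1,
\]
as claimed. The main obstacle is this last bookkeeping: pinning down, with mutually consistent sign conventions, both the sign of the definite form $G_F$ and the identification $\mu(F) = y(D)$, since an error in either convention flips a sign and destroys the identity. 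An alternative, more computational route would be an induction on the number of crossings in the spirit of Traczyk, tracking how $\sigma(L)$, $o(D)$, and $y(D)$ each change under a single crossing modification; this requires controlling the non-additivity of the signature and seems less transparent than the Goeritz-matrix argument above.
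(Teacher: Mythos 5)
Your argument is correct, but note first that the paper contains no proof of this lemma to compare against: it is imported wholesale from Lee \cite[Proposition 3.11]{Lee} and Traczyk \cite[Theorem 2(1)]{Traczyk}. Measured against those sources, your Gordon--Litherland/Goeritz route is essentially the standard published argument (the inductive crossing-by-crossing route you mention and set aside at the end is Traczyk's), and all of its asserted ingredients are true: for a connected alternating diagram the all-$A$ state circles are exactly the boundaries of one checkerboard color class, so $o(D)$ counts those regions; the Goeritz matrix with respect to that class is the reduced Laplacian of the Tait graph with single-signed edge weights, hence definite; its size is $o(D)-1$ since $\mathrm{rank}\, H_1(F) = (\text{number of complementary regions}) - 1$ by an Euler characteristic count; and the Gordon--Litherland correction evaluates to $\mu(F)=y(D)$. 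Two points deserve to be nailed down rather than waved at. First, definiteness of a reduced Laplacian requires the Tait graph to be \emph{connected}, which is where non-splitness enters (a reduced alternating diagram of a non-split link is connected); reducedness itself is used not so much for ``$o(D)$ being honest'' as for ensuring no crossing joins a region of the distinguished class to itself, so that every crossing genuinely contributes an edge to the Laplacian and an $\eta(c)=\pm1$ to the correction term. Second, and this is the caveat you yourself flag: the color class is \emph{not} free to be ``chosen appropriately,'' because the pattern-$A$ resolution in the statement fixes it, and picking the other class would prove the mirror identity instead. So the local check is not optional bookkeeping but the crux; fortunately it is a finite verification on a single alternating crossing with its two orientation choices, showing that for the surface complementary to the $A$-state color one has $\eta(c)=+1$ at every crossing and that a crossing is of type II precisely when it is positive, whence $\operatorname{sig}(G_F)=o(D)-1$ and $\mu(F)=y(D)$. (Both trefoils confirm the conventions: for the right-handed trefoil $o=2$, $y=3$, $\sigma=-2$; for the left-handed one $o=3$, $y=0$, $\sigma=2$.) With that check carried out, your proof closes and agrees with the cited sources.
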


\begin{figure}[htb]
    \centering
	\begin{overpic}[width=.22\textwidth]{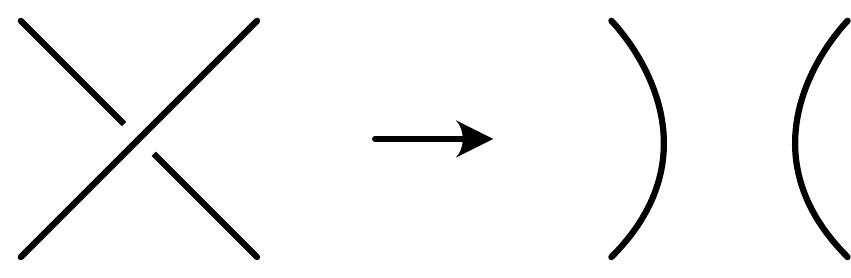}
	\end{overpic}
	\caption{$0$-resolution of pattern $A$}
	\label{fig:0-resolution}
\end{figure}

Another method to evaluate the signature of a knot is using a banding. 
For a given link $L$ in $S^3$ and an embedding $b \colon I \times I \to S^3$ such that $b ( I \times I ) \cap L = b ( I \times \partial I )$, where $I$ denotes a closed interval, we obtain a (new) link as $ ( L - b ( I \times \partial I ) ) \cup b ( \partial I \times I )$. 
We call the link so obtained the link obtained from $L$ by a \emph{banding}\footnote{The operation is sometimes called a band surgery, a bund sum (operation), or a hyperbolic transformation in a variety of contexts.} along the band $b$. 

\begin{lemma}[{\cite[Lemma 7.1]{Murasugi65}}]\label{lem:signatureBand}
	Let $K$ be a knot and $L$ a two-component link $L$ obtained from $K$ by a banding. 
    Then $|\sigma(K) - \sigma(L)| \le 1$. 
\end{lemma}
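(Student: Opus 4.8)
The plan is to realize the banding as the attachment of a single $1$--handle between Seifert surfaces and then to invoke eigenvalue interlacing for the symmetrized Seifert forms. Recall that $\sigma(K)$ and $\sigma(L)$ are the signatures of the symmetrized Seifert matrices $V_K + V_K^{T}$ and $V_L + V_L^{T}$ associated to Seifert surfaces $F_K$ and $F_L$ of $K$ and $L$. The essential geometric input is the construction of a \emph{compatible} pair of such surfaces, after which the estimate becomes pure linear algebra.

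First I would build $F_L$ and $F_K$ differing by exactly one band. Starting from the band $b$ realizing the banding $K \rightsquigarrow L$, one can isotope a Seifert surface so that its interior meets $b$ only along the attaching arcs; concretely, I would choose a Seifert surface $F_L$ for $L$ whose interior is disjoint from the interior of $b(I \times I)$, so that $F_K := F_L \cup b(I \times I)$ is a Seifert surface for $K$. Here one must check orientation coherence: since $L$ is the oriented link obtained from the oriented knot $K$, the band is attached coherently and $F_K$ inherits an orientation extending that of $F_L$. Because the band merges the two boundary components of $F_L$ into the single boundary component $K$, an Euler characteristic count gives $\chi(F_K) = \chi(F_L) - 1$, hence
\[
\operatorname{rank} H_1(F_K) = \operatorname{rank} H_1(F_L) + 1 .
\]

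Next I would compare the two Seifert forms. The inclusion $F_L \hookrightarrow F_K$ is injective on $H_1$, so I can extend a basis of $H_1(F_L)$ by a single class $\gamma$ (a loop running through the new band) to a basis of $H_1(F_K)$. Since the band is a thin strip off which the old cycles can be pushed, the Seifert linking numbers among the classes of $H_1(F_L)$ are computed in $F_K$ exactly as in $F_L$. Thus, in this basis, $V_K$ is obtained from $V_L$ by adjoining one row and one column, so that $V_L$ is the leading principal block of $V_K$; consequently $V_K + V_K^{T}$ is obtained from $V_L + V_L^{T}$ by bordering with one row and one column. The estimate is then immediate: if a symmetric matrix $B$ of size $n+1$ has a symmetric $n \times n$ matrix $A$ as its leading principal submatrix, Cauchy's interlacing theorem forces the eigenvalues of $A$ and $B$ to interlace, so the numbers of positive and of negative eigenvalues of $B$ each exceed those of $A$ by at most $1$. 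Hence $|\sigma(B) - \sigma(A)| \le 1$, which yields $|\sigma(K) - \sigma(L)| \le 1$.

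I expect the main obstacle to be the surface step rather than the algebra. One must genuinely produce the pair $(F_L, F_K)$ differing by a single coherently oriented band and verify the leading-block claim, namely that the old Seifert pairings are unchanged after the band is attached. Once this geometric normalization is in place, the interlacing argument closes the proof at once.
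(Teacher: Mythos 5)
Your proof is correct and is essentially the argument behind the paper's own treatment of this statement: the paper gives no proof, citing Murasugi's Lemma 7.1, whose proof follows exactly your scheme --- realize the banding as the attachment of a single $1$--handle joining Seifert surfaces, so that the symmetrized Seifert matrix of $K$ is a one-row-one-column bordering of that of $L$, whence the signatures differ by at most one by interlacing. The geometric normalization you flag (an $F_L$ whose interior avoids $b(I \times I)$, meeting it only along $b(\partial I \times I) \subset L$) is standard --- eliminate intersection circles and arcs with the band disk by innermost-circle compressions and outermost-arc boundary compressions --- and the orientation coherence you worry about is automatic here, since an incoherent banding on a knot produces a knot rather than a two-component link.
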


\section{Montesinos knots of genus two}\label{sec:Montg2}

Since PCSC is true for any pretzel knot \cite{StipsiczSzabo} and any two-bridge knot \cite{IchiharaJongMattmanSaito}, by Lemma~\ref{lem:Hanselman}, one of the next targets is the set of non-pretzel and non-two-bridge Montesinos knots of genus two. 
Here we enumerate such knots. 
We follow the conventions of \cite{HirasawaMurasugi} for our notation to describe Montesinos knots. 
In particular, continued fractions are subtractive, that is, $\left[ c_1, c_2, \dots, c_n\right]$ indicates the following subtractive continued fraction: 
\[ \dfrac{1}{c_1 - \dfrac{1}{c_2 - \cdots -\dfrac{1}{c_n}}} \]
Subtractive continued fractions satisfy the following formulae, see \cite[p.\ 57 (3), (5), (6), (7), (8)]{HirasawaMurasugi}: 
\begin{align}
    [\, \ldots, c_{n-1}, \pm 2] &= [\, \ldots, c_{n-1} \mp 1, \mp 2], \label{eq:HM3} \\ 
    [\, \ldots, c_{i-1}, \pm 1, c_{i+1}, \ldots \,] &= [\, \ldots, c_{i-1} \mp 1, c_{i+1}\mp 1, \ldots \, ] \quad \text{if } i \ge 2, \label{eq:HM5} \\ 
    [\, \ldots, c_{n-1}, \pm 1 ] &= [\, \ldots, c_{n-1} \mp 1 ], \label{eq:HM6} \\ 
    [2,2,\dots, 2, c_{k+1}, \ldots \, ] &= 1 + [-(k+1), c_{k+1}-1, \ldots \, ] \quad \text{if } k \ge 1, \label{eq:HM7} \\ 
    [-2,-2,\dots, -2, c_{k+1}, \ldots \, ] &= -1 + [k+1, c_{k+1}+1, \ldots \, ] \quad \text{if } k \ge 1. \label{eq:HM8} 
\end{align}

\begin{proposition}\label{prop:Monteg2}
Let $a, b, c, d, e$ be non-zero integers. 
A non-pretzel and non-two-bridge Montesinos knot of genus two is equivalent to one of the following: 
\begin{description}
 \item[(o1)] 
$M\left([2a+1, 2b], [2c+1], [2d+1], [2e+1] \right) = M\left(\dfrac{2b}{4ab + 2b-1}, \dfrac{1}{2c+1}, \dfrac{1}{2d+1}, \dfrac{1}{2e+1} \right)$ \\ ($a, c, d, e \ne -1$)
\item[(o1')] 
$M\left([2a+1, 2b], [2c+1], [2d+1], \pm 1 \right) = M\left(\dfrac{2b}{4ab + 2b-1}, \dfrac{1}{2c+1}, \dfrac{1}{2d+1}, \pm 1\right)$ ($a, c, d \ne -1$)
\item[(o2)] 
$M\left([2a+1, 2b], [2c+1, 2d], [2e+1] \right) = M\left(\dfrac{2b}{4ab + 2b-1}, \dfrac{2d}{4cd + 2d-1}, \dfrac{1}{2e+1} \right)$ ($a,c,e \ne -1$)
\item[(o3)] 
$M\left([2a, \pm3], [2b+1] ,[2c+1] \right) = M\left(\dfrac{3}{6a \mp 1}, \dfrac{1}{2b+1} , \dfrac{1}{2c+1} \right)$ ($b,c \ne -1$ and $|a| \ge 2$)
\item[(o3')] 
$M\left([\pm2, \mp3], [2b+1] ,[2c+1] \right) = M\left( \pm \dfrac{3}{7}, \dfrac{1}{2b+1}, \dfrac{1}{2c+1} \right)$ ($b,c \ne -1$)
\item[(o4)] 
$M\left([2a, \pm2, 2b+1], [2c+1] ,[2d+1] \right) = M\left(\dfrac{4b +2 \mp 1}{8ab +4a \mp 2a \mp 2b \mp 1}, \dfrac{1}{2c+1} , \dfrac{1}{2d+1} \right)$  ($b,c,d \ne -1$ and $|a| \ge 2$)
\item[(o4')] 
$M\left([\pm2, \mp2, 2b+1], [2c+1] ,[2d+1] \right) = M\left(\dfrac{\pm 4b + 1 \pm 2}{10b + 5 \pm2}, \dfrac{1}{2c+1} , \dfrac{1}{2d+1} \right)$  ($b,c,d \ne -1$)
\item[(o5)] 
$M\left([2a+1, 2b, 2c], [2d+1] ,[2e+1] \right) = M\left(\dfrac{4bc-1}{8abc + 4bc -2a -2c -1}, \dfrac{1}{2d+1} ,\dfrac{1}{2e+1} \right)$  ($a,d,e \ne -1$)
\item[(e1)] 
$M\left([2a], [2b, 2c] ,[2d, 2e] \right) = M\left(\dfrac{1}{2a}, \dfrac{2c}{4bc-1}, \dfrac{2e}{4de-1} \right)$ 
\item[(e2)] 
$M\left([2], [-2, 2a] ,[2, 2b], [-2, 2c] \right) = M\left(\dfrac{1}{2}, \dfrac{-2a}{4a+1}, \dfrac{2b}{4b-1}, \dfrac{-2c}{4c+1} \right)$ or its mirror 
\item[(e3)] 
$M\left([3, 2a+1], [-3] ,[3], [-3] \right) = M\left(\dfrac{2a+1}{6a+2}, \dfrac{-1}{3}, \dfrac{1}{3}, \dfrac{-1}{3} \right)$ or its mirror ($a \ne 0,-1$) 
\end{description}
\end{proposition}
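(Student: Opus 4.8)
The plan is to derive the list from the explicit genus formula for Montesinos links due to Hirasawa and Murasugi, whose conventions we have already adopted. First I would put each constituent rational number into the subtractive continued-fraction normal form used in \cite{HirasawaMurasugi}, so that the genus of $M(\beta_1/\alpha_1, \dots, \beta_r/\alpha_r)$ is expressed as a sum of local contributions, one from each tangle, read off from the parities and the number of entries of its continued fraction. The single-component (knot) condition splits the analysis into two parity regimes, corresponding to the two ways a Montesinos link can be a knot: the case in which every denominator $\alpha_i$ is odd, which produces the families (o1)--(o5) together with their degenerate relatives (o1$'$)--(o4$'$), and the case in which exactly one $\alpha_i$ is even, which produces (e1)--(e3).

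With the genus written as such a sum, imposing $g(K) = 2$ becomes a bounded packing problem: each tangle whose continued fraction has length at least two and nontrivial entries contributes a strictly positive, quantized amount to the genus, so a total of two caps both the length $r$ and the lengths of the individual continued fractions. I would enumerate the finitely many ways to distribute the genus budget of two among the tangles, recording for each distribution the admissible continued-fraction shapes; this yields a provisional and visibly finite list of combinatorial types.

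The provisional list must then be cleaned up. Using the continued-fraction reduction identities \eqref{eq:HM3}--\eqref{eq:HM8} I would absorb terminal $\pm 1$ and $\pm 2$ entries and bring each tangle to canonical form, and using the Bonahon--Siebenmann classification of Montesinos knots I would identify types that present the same knot up to orientation-preserving homeomorphism and mirror image, collapsing the redundancies coming from permuting the tangles, shifting the integer framing, and reflecting. Finally I would strike out the pretzel knots, which are exactly those in which every tangle is of the form $1/q$ (continued fraction of length one), and the two-bridge knots, which arise only in effective length at most two; what survives is precisely the stated list.

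The main obstacle is not any individual computation but the twin requirements of completeness and non-redundancy. I must be certain that the genus-budget analysis overlooks no combinatorial type and that the subsequent normalization collapses the list to the stated representatives exactly, neither merging genuinely distinct knots nor leaving duplicates. The delicate points are the parity bookkeeping that separates the odd families from the even families, and the borderline tangles carrying $\pm 2$ or $\pm 1$ entries, where the identities \eqref{eq:HM3} and \eqref{eq:HM7}--\eqref{eq:HM8} can alter the apparent length of a continued fraction and thereby move a knot between the primed and unprimed families; these cases will demand the most careful handling.
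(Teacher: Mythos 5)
Your overall route coincides with the paper's: both arguments run the Hirasawa--Murasugi genus formulas, split into the odd regime (all $\alpha_i$ odd, yielding (o1)--(o5) and the primed families) and the even regime (one even $\alpha_i$, yielding (e1)--(e3)), enumerate the ways a genus budget of two can be distributed, normalize with the identities \eqref{eq:HM3}--\eqref{eq:HM8}, and strike pretzel and two-bridge knots. However, there is one concrete gap: your premise that the genus ``is expressed as a sum of local contributions, one from each tangle, read off from the parities and the number of entries of its continued fraction'' is false in precisely the case that produces (e2) and (e3). In the even case, \cite[Theorem 3.2]{HirasawaMurasugi} gives $g = \frac12\left(1+\sum_{i} m_i\right)$ when $\gamma \ne 0$, and $g = \frac12\left(-1+\sum_{i} m_i\right)$ when $\gamma = 0$ and the leading entries satisfy $\left(c_1^{(1)},\dots,c_1^{(r)}\right) \ne \pm(1,-1,\dots,1,-1)$; but in the remaining alternating-sign case the formula carries the \emph{global} correction $-(p+1)$, where $p=\min\{p_1,\dots,p_r\}$ and $p_i$ counts the leading run of $2$'s (resp.\ $-2$'s) in $S_i$. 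This term depends jointly on the values of the leading entries of all tangles, not merely on parities and lengths. A purely local budget analysis concludes $\sum_i m_i = 5$, which with $m_1\ge 1$ odd and $m_i\ge 2$ even forces $r=3$ and $(m_1,m_2,m_3)=(1,2,2)$, i.e.\ only (e1); the families (e2) (arising from $r=4$, $p=1$, $\sum m_i = 7$) and (e3) (from $r=4$, $p=2$, $\sum m_i = 9$, then brought to the stated form via \eqref{eq:HM7}, \eqref{eq:HM8} and the flype of Figure~\ref{fig:e3}) would be silently omitted, so the completeness you yourself flag as the chief difficulty would fail exactly where the formula is non-additive.

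Two lesser discrepancies with the paper's proof are worth noting. First, the pretzel knots to be excluded are not only those in which every tangle has a length-one fraction: in the odd case with $\gamma \ne 0$ the integer parameter is absorbed as $|\gamma|$ additional $\pm 1$ strands, giving pretzels with $r+|\gamma|$ strands (this is how the paper disposes of the cases $(r,|\gamma|)=(4,1)$ and $(3,2)$), so your criterion must be phrased after absorbing $\gamma$. Second, the paper never invokes the Bonahon--Siebenmann classification: cyclic permutation of tangles, the identities \eqref{eq:HM3}--\eqref{eq:HM8}, mirror images, and one explicit flype suffice to collapse the list, which keeps the enumeration self-contained; your appeal to the classification is not wrong, but it is heavier machinery than the normalization actually requires.
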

\begin{proof}
The genus of a Montesinos knot is completely determined by Hirasawa and Murasugi~\cite{HirasawaMurasugi}. 
Let 
\[ K= M\(\beta_1/\alpha_1, \beta_2/\alpha_2, \dots, \beta_r/\alpha_r \mid \gamma \)\] 
be a Montesinos knot\footnote{We use a notation $M\(\beta_1/\alpha_1, \dots, \beta_r/\alpha_r \mid \gamma \)$ instead of $M\(\beta_1/\alpha_1, \dots, \beta_r/\alpha_r \mid e \)$ since the symbol $e$ is used in the statement of Proposition~\ref{prop:Monteg2}.}. 
If $r \le 2$, then $K$ is a two-bridge knot. 
Thus, we assume that $r \ge 3$.  
We also assume that for all $i=1,2, \dots, r$, we have $\alpha_i >1$ and $- \alpha_i < \beta_i < \alpha_i$, with $\gcd (\alpha_i, \beta_i) =1$ as in \cite{HirasawaMurasugi}. 

First we consider the case where $K$ is of odd type, that is, suppose that $\alpha_i$ is odd for $i = 1,2,\cdots,r$. 
For each $i$, let 
\[ S_i = [2 {a_1}^{(i)}, {b_1}^{(i)}, 2 {a_2}^{(i)}, {b_2}^{(i)}, \dots, 2 {a_{q_i}}^{(i)}, {b_{q_i}}^{(i)}] \] 
be a strict continued fraction of $\beta_i/ \alpha_i$, that is, if $|{a_j}^{(i)}| = 1$, then ${a_j}^{(i)}{b_j}^{(i)} < 0$. 
By \cite[Theorem 3.1]{HirasawaMurasugi}, we have 
\[ g(K) = \(\sum_{i=1}^r b^{(i)} + |\gamma| -1\)/2 \, , \] 
where $b^{(i)} = \sum_{j=1}^{q_i} |{b_j}^{(i)}|$. 
Now we assume that $g(K) = 2$, then we have 
\[ \sum_{i=1}^r b^{(i)} + |\gamma| = 5 \, . \]
Since $b^{(i)} \ge 1$ for each $i$, we have $3 \le r \le 5$. 
Here we start the case by case argument. 

\begin{enumerate}[leftmargin=17pt]
\item 
Assume that $r = 5$. 
Then $\gamma = 0$, and $b^{(i)} = 1$, that is, $| b_1^{(i)} | = 1$ for $i=1,\dots,5$. 
By Equation~\eqref{eq:HM6}, we have 
\[ S_i = [2{a_1}^{(i)}, {b_1}^{(i)}] = [ 2{a_1}^{(i)} - {b_1}^{(i)} ]\, . \]
Note that $|2 {a_1}^{(i)} - {b_1}^{(i)}| \ge 3$ for each $i$ since if ${a_1}^{(i)} = \pm 1$, then ${b_1}^{(i)} = \mp 1$ by the definition of a strict continued fraction. 
Therefore $K$ is a pretzel knot with five strands; 
\[ P \( 2{a_1}^{(1)} - {b_1}^{(1)}, 2{a_1}^{(2)} - {b_1}^{(2)}, 2{a_1}^{(3)} - {b_1}^{(3)}, 2{a_1}^{(4)} - {b_1}^{(4)}, 2{a_1}^{(5)} - {b_1}^{(5)} \) . \] 
We exclude this case since we only enumerate non-pretzel Montesinos knots. 

\item 
Assume that $r = 4$. 
Then $|\gamma| = 0 $ or $1$. 
\begin{enumerate}[leftmargin=17pt]
\item
Assume that $|\gamma| = 1$. 
Then by the same argument as in (i), 
\[ K = P\( 2{a_1}^{(1)} - {b_1}^{(1)}, 2{a_1}^{(2)} - {b_1}^{(2)}, 2{a_1}^{(3)} - {b_1}^{(3)}, 2{a_1}^{(4)} - {b_1}^{(4)}, \gamma \). \] 
Since $|2{a_1}^{(i)} - {b_1}^{(i)}| \ge 3$, 
each of $\{1,-1\}, \{2,-1\}, \{1,-2\}$ is not a subset of $\{ 2{a_1}^{(1)} - {b_1}^{(1)}, 2{a_1}^{(2)} - {b_1}^{(2)}, 2{a_1}^{(3)} - {b_1}^{(3)}, 2{a_1}^{(4)} - {b_1}^{(4)}, \gamma  \}$. 
Thus, $K$ is also a pretzel knot with five strands, which have to be excluded. 

\item
Assume that $|\gamma| = 0$. 
By a cyclic permutation, we may assume that $b^{(1)} = 2$ and $b^{(2)} = b^{(3)} = b^{(4)} = 1$. 
Then we have $S_i = [2{a_1}^{(i)} - {b_1}^{(i)}]$ for $i = 2, 3, 4$. 
There are two cases for the condition $b^{(1)} = 2$, that is, the case where $|{b_1}^{(1)}| = |{b_2}^{(1)}| = 1$, or the case where $|{b_1}^{(1)}| = 2$. 

First we consider the case where $|{b_1}^{(1)}| = |{b_2}^{(1)}| = 1$. 
Then, by Equations~\eqref{eq:HM5} and \eqref{eq:HM6}, we have 
\[ S_1 
= [2{a_1}^{(1)}, {b_1}^{(1)}, 2{a_2}^{(1)}, {b_2}^{(1)}] 
= [2{a_1}^{(1)}- {b_1}^{(1)}, 2{a_2}^{(1)} - {b_1}^{(1)} - {b_2}^{(1)}] \, . 
\] 
Setting $2{a_1}^{(1)}- {b_1}^{(1)} = 2a +1$, 
$2{a_2}^{(1)} - {b_1}^{(1)} - {b_2}^{(1)}= 2b$, 
$2a_1^{(2)} - b_1^{(2)} = 2c+1$, 
$2a_1^{(3)} - b_1^{(3)} = 2d+1$, and 
$2a_1^{(4)} - b_1^{(4)} = 2e+1$, 
we have the knots 
\[ M\left([2a+1, 2b], [2c+1], [2d+1], [2e+1] \right), \] 
where $a, b, c, d , e$ are non-zero integers with $a,c,d,e \ne -1$. 
All of these knots form the family (o1). 
Next we consider the case where $|{b_1}^{(1)}| = 2$. 
By Equation~\eqref{eq:HM3}, we have 
\[ S_1 
= [2{a_1}^{(1)}, {b_1}^{(1)}]  
= [2{a_1}^{(1)} - {b_1}^{(1)}/2, -{b_1}^{(1)}] \, . 
\] 
Then we obtain knots already appeared in the previous case, that is, these knots are contained in the family (o1). 
\end{enumerate}

\item 
Assume that $r = 3$. 
Then $|\gamma| = 0, 1,$ or $2$. 
\begin{enumerate}[leftmargin=17pt]
\item
Assume that $|\gamma| = 2$. 
Then by the same argument as in (i), 
\[ K = P \(2{a_1}^{(1)} - {b_1}^{(1)}, 2{a_1}^{(2)} - {b_1}^{(2)}, 2{a_1}^{(3)} - {b_1}^{(3)}, \varepsilon, \varepsilon\), \]  
where $|\varepsilon| = 1$. 
Then $K$ is a pretzel knot with five strands, which have to be excluded. 

\item[(iii-2)]
Assume that $|\gamma| = 1$. 
By the same argument as in (ii-2), we have the family (o1'). 

\item[(iii-3)]
Assume that $|\gamma| = 0$. 
By a cyclic permutation, we may assume that either 
$b^{(1)} = b^{(2)} = 2$ and $b^{(3)} = 1$, 
or $b^{(1)} = 3$ and $b^{(2)} = b^{(3)} = 1$. 
If $b^{(1)} = b^{(2)} = 2$ and $b^{(3)} = 1$, then by the same argument as in (ii-2) or (iii-2), we obtain the family (o2). 
If $b^{(1)} = 3$ and $b^{(2)} = b^{(3)} = 1$, then we have the families (o3)--(o5) as follows: 
Since $b^{(1)} = 3$, there are four cases for $S_1$ as 
\begin{align*}
S_1 
&= \begin{cases}
[2 {a_1}, 3 \varepsilon_1], \\ 
[2 a_1, 2 \varepsilon_1, 2 a_2, \varepsilon_2], \\ 
[2 a_1,\varepsilon_1, 2 a_2, 2\varepsilon_2], \\ 
[2 a_1, \varepsilon_1, 2 a_2, \varepsilon_2, 2 a_3, \varepsilon_3], 
\end{cases}\\
&= \begin{cases}
[2 a_1, 3 \varepsilon_1],  \\ 
[2 a_1, 2 \varepsilon_1, 2 a_2 - \varepsilon_2],  \\ 
[2 a_1 - \varepsilon_1, 2 a_2 - \varepsilon_1 - \varepsilon_2, -2\varepsilon_2], \\ 
[2 a_1 - \varepsilon_1, 2 a_2 - \varepsilon_1 - \varepsilon_2, 2 a_3 - \varepsilon_2 - \varepsilon_3], 
\end{cases} 
\end{align*}
where $a_i = {a_i}^{(1)}$, and $|\varepsilon_i| = 1$. 

First we consider the case where $S_1 = [2 a_1, 3 \varepsilon_1]$. 
If $a_1 = \pm 1$, then $\varepsilon_1 = \mp 1$, and then we obtain the family (o3'). 
If $a_1 \ne \pm 1$, then we obtain the family (o3). 

Next we consider the case where $S_1 = [2 a_1, 2 \varepsilon_1, 2 a_2 - \varepsilon_2]$. 
By the same argument as in (o3') and (o3), we have the families (o4) and (o4'). 

Finally, we obtain the family (o5) from the cases where $S_1 = [2 a_1 - \varepsilon_1, 2 a_2 - \varepsilon_1 - \varepsilon_2, -2\varepsilon_2]$ or $S_1 = [2 a_1 - \varepsilon_1, 2 a_2 - \varepsilon_1 - \varepsilon_2, 2 a_3 - \varepsilon_2 - \varepsilon_3]$. 
\end{enumerate}
\end{enumerate}
Now we complete the enumeration for odd type. 

Next we consider Montesinos knots of even type. 
Suppose that $\alpha_1$ is even and $\beta_1$ is odd, and that $\alpha_i$ is odd and $\beta_i$ is even for any $i \ge 2$. 
For each $i$, let $S_i = [2 {c_1}^{(i)}, 2 {c_2}^{(i)}, \dots, 2 {c_{m_i}}^{(i)}]$ be an even continued fraction of $\beta_i/ \alpha_i$. 
Note that $m_1$ is odd and $m_i$ is even for $i=2,\dots, r$. 
\begin{enumerate}[leftmargin=17pt]
\item 
Assume that $\gamma \ne 0$. 
By \cite[Theorem 3.2]{HirasawaMurasugi}, $\displaystyle g(K) = \dfrac12\(1 + \sum_{i=1}^r m_i\)$. 
Thus, if $g(K) = 2$, then we have \[ \displaystyle \sum_{i=1}^r m_i = 3 , \] which contradicts to the conditions $m_1 \ge 1$ and $m_i \ge 2$ for $i = 2, 3, \dots, r$, and $r \ge 3$. 
So we may assume that $\gamma = 0$. 

\item 
Assume that $\gamma =0$. 
\begin{enumerate}[leftmargin=17pt]
\item 
Assume that $\(c_1^{(1)}, c_1^{(2)}, \dots, c_1^{(r)}\) \ne \pm \(1, -1, \dots, 1, -1\)$. 
By \cite[Theorem 3.2 (II)]{HirasawaMurasugi}, 
\[ \displaystyle g(K) = \dfrac12\(-1 + \sum_{i=1}^r m_i\). \] 
Thus, if $g(K) = 2$, then we have 
\[ \sum_{i=1}^r m_i = 5 \, . \]
Since each $m_1 \ge 1$ and $m_i \ge 2$ for any $i \ge 2$, we have $r=3$ and $m_1 =1$, $m_2 = m_3 = 2$. 
Then we have the family (e1). 
\item 
Assume that $\(c_1^{(1)}, c_1^{(2)}, \dots, c_1^{(r)}\) = \pm \(1, -1, \dots, 1, -1\)$. 
By taking the mirror image if necessary, we may assume that $\(c_1^{(1)}, c_1^{(2)}, \dots, c_1^{(r)}\) = \(1, -1, \dots, 1, -1\)$. 
Note that $r$ is even in this case. 
Let $p_i$ be the number of leading 2's in $S_i$, if $i$ is odd, or the number of leading $-2$'s in $S_i$, if $i$ is even. 
Let $p = \min \set{ p_1, \dots, p_r }$. 
Then, by \cite[Theorem 3.2 (III)]{HirasawaMurasugi}, $\displaystyle g(K) = \dfrac12\(1 + \sum_{i=1}^r m_i\) -(p+1)$. 
Thus, if $g(K) = 2$, then we have 
\[ \sum_{i=1}^r m_i = 2p+ 5 \, . \]
Since $p \le m_i$ for each $i = 1,2, \dots, r$, we have $rp \le 2p+5$. 
Thus, we have $r \le 2 + 5/p$, and thus, $r \le 7$. 
Furthermore, since $r \ge 3$ and $r$ is even, $r = 4$ or $6$. 

If $r = 6$, then we have $p=1$ and $\sum_{i=1}^6 m_i = 7$, which contradicts to the conditions $m_1 \ge 1$ and $m_i \ge 2$ for $i = 2,\dots, 6$. 

If $r = 4$, then we have $p=1$ or $2$.
If $p = 1$, then $\sum_{i=1}^4 m_i = 7$. 
Since $m_1 \ge 1$ and $m_2,m_3,m_4 \ge 2$, we have $m_1 =1$ and $m_2 = m_3 = m_4 = 2$. 
Then we have the family (e2). 
If $p = 2$, then $\sum_{i=1}^4 m_i = 9$. 
The case where $m_1 = 1$ is unsuitable since $p = 2$ and $m_1 \ge p$. 
Thus, we have $m_1 = 3$ and $m_2 = m_3 = m_4 = 2$ (recall that $m_1$ is odd and $m_i$ is even for $i=2,3,4$). 
Since $p = 2$, we have 
\[ S_1 = [2,2,2a], \quad S_2 = [-2, -2], \quad S_3 = [2, 2], \quad S_4 = [-2, -2], \]
where $a \ne 0, 1$. 
By Equations~\eqref{eq:HM7} and \eqref{eq:HM8}, these can be deformed into the forms 
\[ S_1 = 1 + [-3,2a-1], \quad S_2 = -1 + [3], \quad S_3 = 1 + [-3], \quad S_4 = -1 + [3]. \]
Then, the Montesinos knot $M (1 + [-3, 2a - 1], -1 + [3], 1 + [-3], -1 + [3])$ is modified by using flype moves to the Montesinos knot $M ([-3, 2a - 1], [3], [-3], [3])$ as shown in Figure~\ref{fig:e3}, which consists of the family (e3).
\begin{figure}[htb]
    \centering
	\begin{overpic}[width=.4\textwidth]{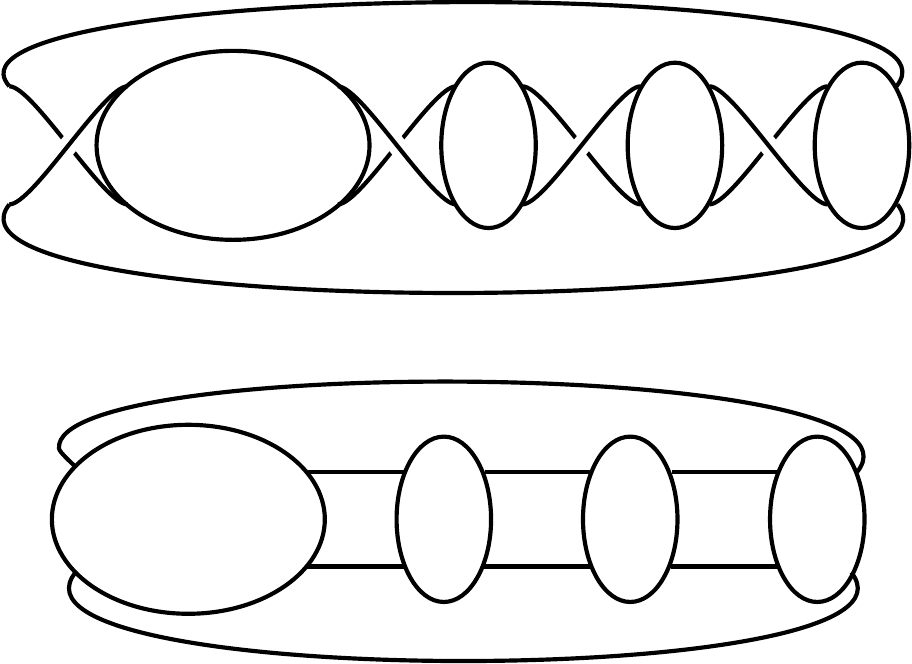}
 		\put(12,55.5){$[-3,2a-1]$}
    	\put(51,55.5){$[3]$}
        \put(71.5,55.5){$[3]$}
        \put(92,55.5){$[3]$}
		\put(50,34){\rotatebox{90}{$=$}}
 		\put(7,14.3){$[-3,2a-1]$}
    	\put(46,14.3){$[3]$}
        \put(66.5,14.3){$[3]$}
        \put(87,14.3){$[3]$}
	\end{overpic}
	\caption{$M\(1 + [-3,2a-1], -1 + [3], 1 + [-3], -1 + [3]\) = M\([-3,2a-1], [3], [-3], [3]\)$}
	\label{fig:e3}
\end{figure}

Now we complete the proof of Proposition~\ref{prop:Monteg2}. \qedhere 
\end{enumerate}
\end{enumerate}
\end{proof}

\section{Proofs of Theorems}\label{sec:proof}

\subsection{Proof of Theorem~\ref{thm:alt} (PCSC for alternating knots)}
Here we give a proof of Theorem~\ref{thm:alt}. 

\begin{proof}[Proof of Theorem~\ref{thm:alt}]
Assume that a non-trivial alternating knot $K$ admits purely cosmetic surgeries. 
Since composite knots admit no purely cosmetic surgeries~\cite{TaoComposite}, $K$ is prime. 
By Lemma~\ref{lem:Hanselman} and Equation~\eqref{eq:tau-sig}, we have $g(K) = 2$ and $\sigma(K) = 0$. 
Then $K$ is obtained from one of the alternating knot diagrams depicted in Figure~\ref{fig:G2} by applying $\overline{t_{2}^\prime}$~moves (Figure~\ref{fig:t2move}) up to taking the mirror image~\cite[Proposition 3.2]{Stoimenow-g2}. 
If $K$ is obtained from one of $8_{12}$, $10_{58}$, $12_{1202}$ by $\overline{t_{2}^\prime}$~moves, then $a_2(K) \ne 0$ holds\footnote{It can be confirmed by a direct calculation of $a_2$ using the skein relation~\eqref{eq:a2skein}.}, which contradicts to Lemma~\ref{lem:IchiharaWu}. 
More precisely, for an alternating knot $K$ with $g(K) =2$, 
\[a_2(K) \ne 0 \iff [\Delta]_1 \ne 4[\Delta]_0\, ,  \]
where $[\Delta]_i$ denotes the absolute value of the $i$-th coefficient of the normalized Alexander polynomial for $K$, see \cite{Jong}. 
By the study on the Alexander polynomials of alternating knots of genus two, in particular, by \cite[Table 1]{Jong}\footnote{\cite[Table 1]{Jong} contains an error. For $10_{58}$, $m=4$ and $M=5$ are correct.}, we see that if $K$ is obtained from one of the diagrams $8_{12}$, $10_{58}$, $12_{1202}$, then $a_2(K) \ne 0$. 
Thus, $K$ is obtained from one of $6_3$, $7_7$, $9_{41}$. 

Assume that $K$ is obtained from $7_7$. 
Then $K$ is equivalent to the knot 
\[ K(a,b,c,d,e) = M( [2a+1, -2b], [2c+1, -2d], [2e+1] ) \] 
depicted in Figure~\ref{fig:7_7}, which is an alternating Montesinos knot. 
Here $a,b,c,d,e$ are integers with $a,c,e \ge 0$ and $b, d \ge 1$. 
\begin{claim}\label{clm:a2w3} 
	For $K = M\left([2a+1, 2b], [2c+1, 2d], [2e+1] \right)$, 
	we have 
	\begin{align*} 
		a_2(K) &= d (c + e + 1) + b (a + d + e + 1) \, ,  \\ 
		w_3(K) &= (d (c + e + 1) (c + d + e + 1) + 2 b d (1 + c + e) + b (a + d + e + 1) (a + b + d + e + 1))/4 \, . 
	\end{align*} 
\end{claim}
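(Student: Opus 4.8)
The plan is to realize $K$ from a simpler knot by inserting the $2b$ twist region and to combine the twist formula of Lemma~\ref{lem:w3twists} with the double twist knot evaluations of Lemmas~\ref{lem:a2TK} and \ref{lem:w3TK}. Write $K_b$ for $K = M([2a+1,2b],[2c+1,2d],[2e+1])$, regarding the $2b$ box as $b$ anti-parallel full twists (the anti-parallel nature being read off from the Seifert orientation of the diagram), so that $K_0$ is the knot obtained by deleting these twists. Reading the continued fraction $[2a+1,2b]$, setting $b=0$ collapses the first Montesinos summand to the $0$-tangle, and $K_0$ becomes the two-bridge knot obtained by closing the remaining two tangles; I expect $K_0$ to be the double twist knot $\mathrm{DT}(2(c+e+1),2d)$. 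Granting this, Lemmas~\ref{lem:a2TK} and \ref{lem:w3TK} give
\[
a_2(K_0) = (c+e+1)d, \qquad w_3(K_0) = \frac{(c+e+1)d\,(c+d+e+1)}{4}.
\]

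The second step is to feed $K_0$ and the $2b$ region into Lemma~\ref{lem:w3twists}. For this I must verify its hypothesis: smoothing one crossing of the $2b$ region yields a two-component link $K'\cup K''$ whose components are each unknotted, and I must compute $\lk = \lk(K',K'')$. I expect $\lk = -(a+d+e+1)$, obtained by counting signed crossings between the two smoothed arcs, which thread the $2a+1$, $2d$ and $2e+1$ twist regions; note that the $a$-dependence of $K$, absent from $K_0$, re-enters precisely through this linking number. With $n=b$, Lemma~\ref{lem:w3twists} together with the relation $a_2(K_b)=a_2(K_0)-b\,\lk$ coming from \eqref{eq:a2skein} then yield
\begin{align*}
a_2(K) &= (c+e+1)d + b(a+d+e+1), \\
w_3(K) &= w_3(K_0) + \frac{b}{2}\,a_2(K_0) + \frac{b}{4}\,\lk(\lk-b).
\end{align*}
Substituting $\lk=-(a+d+e+1)$ and using $a+b+d+e+1=(a+d+e+1)+b$, then expanding, turns the right-hand sides into the two claimed polynomials; this last part is routine algebra.

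The main obstacle is the geometry packaged into the previous paragraph, namely the three facts (i) $K_0$ is the double twist knot $\mathrm{DT}(2(c+e+1),2d)$, (ii) both components of the smoothed link $K'\cup K''$ are trivial, and (iii) $\lk(K',K'')=-(a+d+e+1)$. Item (i) is a tangle computation: one must check that deleting the $2b$ twists turns the first summand into the $0$-tangle and then reduce the resulting two-bridge knot to double twist form using the continued fraction identities \eqref{eq:HM3}--\eqref{eq:HM8}; a convenient internal check is that $a_2(K_0)$ and $w_3(K_0)$ must come out independent of $a$, as the claimed formulas force at $b=0$. Items (ii) and (iii) are read off from the diagram, the only delicate point being the bookkeeping of signs and of how the two smoothed arcs thread the $a$-, $d$- and $e$-twist regions. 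Once these are settled, the claim follows from a single application of Lemma~\ref{lem:w3twists}.

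As an alternative that avoids identifying $K_0$ by name, one can instead reduce \emph{both} the $2b$ and the $2d$ regions by Lemma~\ref{lem:w3twists}, starting from $K|_{b=d=0}$, which is the unknot; this replaces step (i) by a second linking-number-and-triviality computation (with $\lk=-(c+e+1)$ for the $2d$ region) but uses only the twist formula. Either route reduces the proof to the same geometric inputs, and I expect the diagrammatic verification of the linking numbers to be the crux.
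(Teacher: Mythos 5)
Your proposal is correct and follows essentially the same route as the paper's proof: the paper likewise sets $b=0$ to identify $K_0$ with $\mathrm{DT}(2(c+e+1),2d)$, applies Lemma~\ref{lem:w3twists} to the $2b$ twist region with $\lk = -(a+d+e+1)$ together with $a_2(K_b)=a_2(K_0)-b\,\lk$ from \eqref{eq:a2skein}, and finishes with Lemmas~\ref{lem:a2TK} and \ref{lem:w3TK}. The three geometric facts you flag as the crux are exactly the ones the paper asserts diagrammatically without further argument, so your write-up is complete to the same standard.
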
 
\begin{proof}[Proof of Claim~\ref{clm:a2w3}] 
	First we calculate $a_2$. 
	Applying Equation~\eqref{eq:a2skein} to any crossing in the twist box labeled $2b$, we have 
	\[ a_2(K(a,b,c,d,e)) = a_2(K(a,b-1,c,d,e)) - \(-(a+d+e+1)\)\, . \]
	Repeating this procedure $|b|$ times, we have
	\[a_2(K(a,b,c,d,e)) = a_2(K(a,0,b,c,d)) - b \(-(a+d+e+1)\)\, . \]
	Since the knot $K(a,0,b,c,d)$ is equivalent to the knot $\mathrm{DT}(2(c+e+1),2d)$, by Lemma~\ref{lem:a2TK}, we have 
	\[ a_2(K(a,b,c,d,e)) = (c+e+1)d + b(a+d+e+1) \, . \]
	Next we calculate $w_3$. 
	Notice that the two-component link obtained by smoothing any crossing in the twist box labeled $2b$ consists of two unknots. 
	Thus, we can apply Lemma~\ref{lem:w3twists} to the $b$-full twists. 
	Using Lemma~\ref{lem:w3TK}, we have 
	\begin{align*}
		w_3(K(a,b,c,d,e)) &= w_3(K(a,0,c,d,e)) + \dfrac{b}{2} a_2(K(a,0,c,d,e)) + \dfrac{b}{4} \lk (\lk - b) \\
		&= w_3(\mathrm{DT}(2(c+e+1),2d)) + \dfrac{b}{2} a_2(\mathrm{DT}(2(c+e+1),2d)) \\  
		&\quad + \dfrac{b}{4} \(-(a+d+e+1) \cdot \(-(a+d+e+1+b)\)\) \\ 
		&= \dfrac{d}{4}(c+e+1)(c+d+e+1) + \dfrac{bd}{2}(c+e+1) \\ 
		&\quad + \dfrac{b}{4}(a+d+e+1)(a+b+d+e+1) \, . 
	\end{align*}
	This completes the proof of Claim~\ref{clm:a2w3}. 
\end{proof}

\begin{figure}[htb]
	\centering
	\includegraphics[width=.5\textwidth]{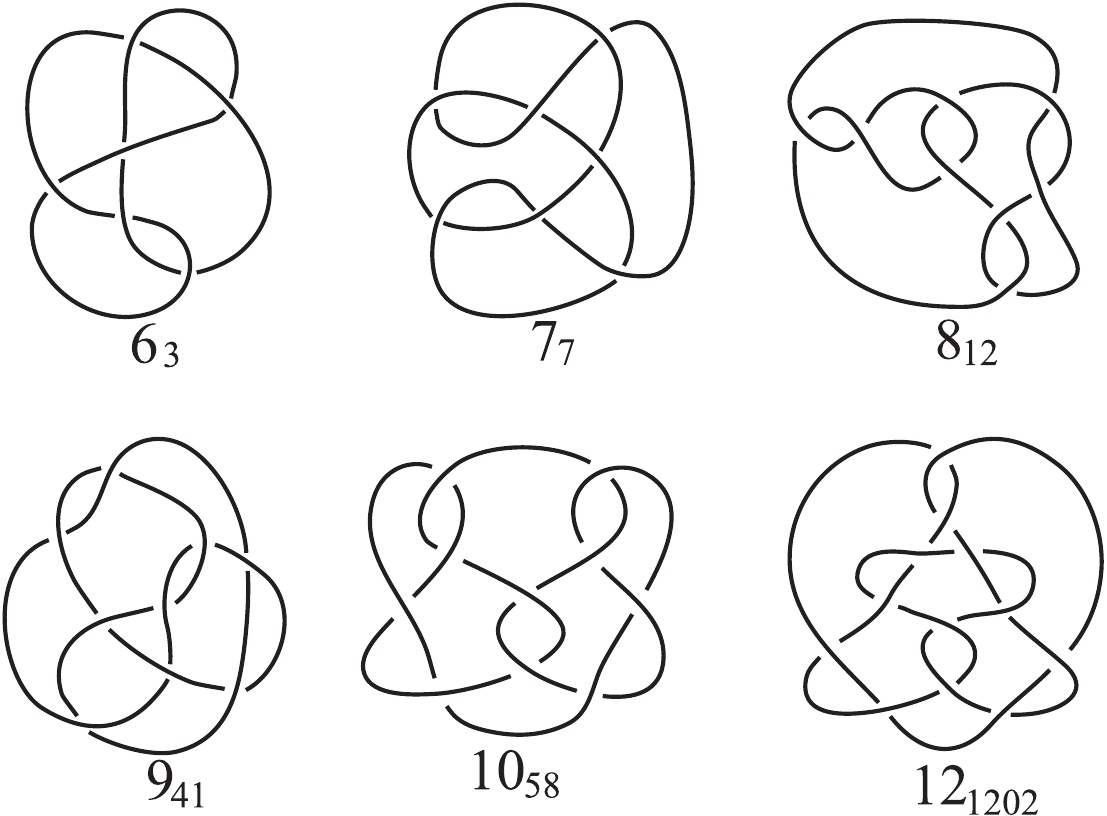}
	\caption{Generators of alternating knots of genus two with $\sigma = 0$}
	\label{fig:G2}
\end{figure}
\begin{figure}[htb]
    \centering
	\begin{overpic}[width=.32\textwidth]{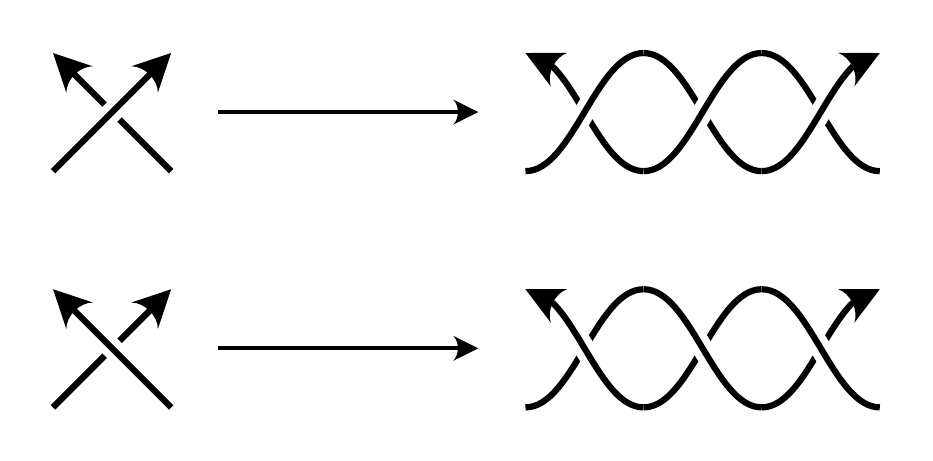}
		\put(25,40.7){$\overline{t_{2}^\prime}$~move}
		\put(25,16){$\overline{t_{2}^\prime}$~move}
	\end{overpic}
	\caption{{$\overline{t_{2}^\prime}$~move}}
	\label{fig:t2move}
\end{figure}
\begin{figure}[htb]
    \centering
	\begin{overpic}[width=.27\textwidth]{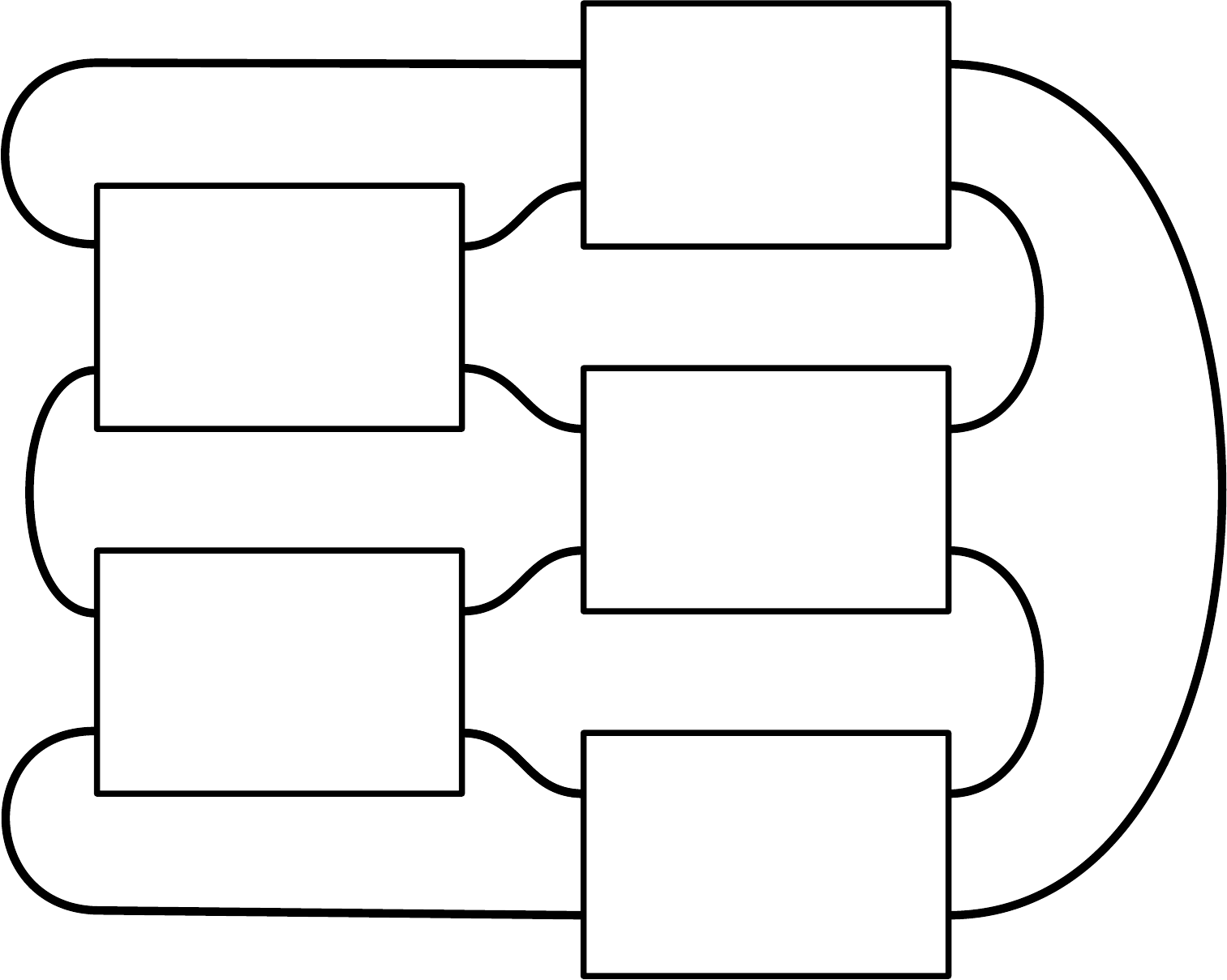}
		\put(14,52){$-2b$}
		\put(14,22){$-2d$}
		\put(51,67){$2a+1$}
		\put(51,37){$2e+1$} 
		\put(51,8){$2c+1$} 
	\end{overpic}
	\caption{$K(a,b,c,d,e)$}
	\label{fig:7_7}
\end{figure}

\begin{claim}\label{clm:o2} 
	Alternating Montesinos knots $M\left([2a+1, -2b], [2c+1, -2d], [2e+1] \right)$ $(a, c, e \ge 0$ and $b,d \ge 1)$ admit no purely cosmetic surgeries. 
\end{claim}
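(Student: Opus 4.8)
The plan is to invoke the finite type invariant obstruction of Lemma~\ref{lem:IchiharaWu2}: it suffices to show that every knot $K = M\left([2a+1,-2b],[2c+1,-2d],[2e+1]\right)$ in this family (with $a,c,e \ge 0$ and $b,d \ge 1$) satisfies $a_2(K) \ne 0$ or $w_3(K) \ne 0$. The computations behind Claim~\ref{clm:a2w3} proceed through Equation~\eqref{eq:a2skein} and the twist formula of Lemma~\ref{lem:w3twists}, both of which are insensitive to the sign of the twist parameters; hence the formulas of Claim~\ref{clm:a2w3} hold for negative twists as well, and I obtain $a_2(K)$ and $w_3(K)$ for the present family simply by substituting $b \mapsto -b$ and $d \mapsto -d$. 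This gives
\[ a_2(K) = -\,d(c+e+1) - b(a-d+e+1), \]
together with a corresponding (bulkier) degree-three expression for $w_3(K)$.

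If $a_2(K) \ne 0$ there is nothing to prove, so the whole content lies on the locus $a_2(K)=0$. Writing $p := c+e+1 \ge 1$ and $q := a+e+1 \ge 1$, the equation $a_2(K)=0$ rearranges to $dp + bq = bd$. Since $b,d,p,q$ are all strictly positive, this relation forces $bd > dp$ and $bd > bq$, i.e.\ $0 < p < b$ and $0 < q < d$; in particular such a vanishing can occur only when $b > c+e+1$ and $d > a+e+1$.

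The decisive step is to feed the relation $dp + bq = bd$ into the $w_3$ formula. I expect this to be the main obstacle, since the raw substituted $w_3$ is a degree-three polynomial in five variables and the simplification is not visible by inspection. The cleanest route is to parametrize $t := p/b$ so that the constraint reads $p = tb$ and $q = (1-t)d$ with $t \in (0,1)$; every monomial of $4\,w_3(K)$ then carries a factor $bd$, and the bracket collapses to $t(1-t)(b+d)$. Translating back through $t(1-t) = \tfrac{p}{b}\cdot\tfrac{q}{d}$, the whole expression telescopes to the strikingly clean form
\[ w_3(K) = \frac{(c+e+1)(a+e+1)(b+d)}{4} = \frac{p\,q\,(b+d)}{4}. \]

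Because $p = c+e+1 \ge 1$, $q = a+e+1 \ge 1$, and $b+d \ge 2$ are all positive, this value is strictly positive; in particular $w_3(K) \ne 0$ whenever $a_2(K)=0$. Thus every knot in the family satisfies $a_2(K)\ne 0$ or $w_3(K)\ne 0$, and Lemma~\ref{lem:IchiharaWu2} yields the claim. The appeal of this argument is that, once the $w_3$ formula has been reduced, no delicate case analysis or sign chasing is required: positivity of the surviving factors finishes the proof outright.
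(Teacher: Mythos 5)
Your proposal is correct and takes essentially the same route as the paper: both apply Lemma~\ref{lem:IchiharaWu2} to the formulas of Claim~\ref{clm:a2w3} with $b,d$ negated, assume $a_2(K)=0$, and substitute that constraint into $w_3(K)$. The only difference is the endgame algebra --- the paper reaches $w_3(K) = -b(b+d)(1+a+e)(1+a-d+e)/(4d)$ and rules out $w_3(K)=0$ by back-substituting $d=1+a+e$ into $a_2(K)=0$, whereas your factored form $w_3(K)=(c+e+1)(a+e+1)(b+d)/4$, which indeed agrees with the paper's expression on the locus $dp+bq=bd$, makes the positivity immediate in one step.
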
 
\begin{proof} 
By Claim~\ref{clm:a2w3}, we have 
\begin{align} 
	a_2(K) &= -d (c + e + 1)  -b (a - d + e + 1) \label{eq:a2}
\end{align}
and 
\begin{align}
	w_3(K) &= (-d (c + e + 1) (c - d + e + 1) + 2 b d (1 + c + e) - b (a - d + e + 1) (a - b - d + e + 1))/4 \label{eq:w3} \, . 
\end{align} 
Assume that $a_2(K) =0$. 
Then we have $c = (-b - a b - d + b d - b e - d e)/d$. 
Substituting this for Equation~\eqref{eq:w3}, we have 
\begin{equation}\label{eq:w3-2}
    w_3(K) = - b (b + d) (1 + a + e) (1 + a - d + e)/(4 d).  
\end{equation}
Assume that $w_3(K) = 0$. 
Then by Equation~\eqref{eq:w3-2} and the conditions $a, c, e \ge 0$ and $b, d \ge 1$, we have 
\[ d = 1+a +e \, . \]
Substituting this for Equation~\eqref{eq:a2}, we have 
\[ -(1 + a + e) (c + e + 1) =0 \, , \]
which contradicts to that $a,c,e \ge 0$. 
This completes the proof of Claim~\ref{clm:o2}. 
\end{proof} 
The knots in Figure~\ref{fig:alt} coincide with the knots obtained from $6_3$ or $9_{41}$ by applying $\overline{t_{2}^\prime}$~moves. 
Now we complete the proof of Theorem~\ref{thm:alt}. 
\end{proof}

We remark that the knots in Figure~\ref{fig:alt} contain infinitely many knots with $a_2 = 0$ and $w_3 = 0$. 
For a knot $K(a,b,c,d,e,f)$ in the left side of Figure~\ref{fig:alt}, we have 
\begin{align*}
    a_2(K(a,b,c,d,e,f)) &= 1 + a (1 + b + c) + d + d e + b (c - e - f) + d f + e f - c (e + f), \\ 
    w_3(K(a,b,c,d,e,f)) &= \dfrac{1}{4}(2 c + a^2 (1 + b + c) - d + 2 c d - d^2 - 2 e - c^2 e - 2 d e + 2 c d e - d^2 e + c e^2 - d e^2 \\ 
        & \quad + b^2 (c - e - f)  - (2 + c^2 + d (2 + d) + 4 d e + e^2 - 2 c (d + 2 e)) f + (c - d - e) f^2 \\ 
        & \quad + a (b^2 + 4 b c + (1 + c) (1 + c - 2 e - 2 f) - 2 b (-1 + e + f)) \\ 
        & \quad + b (2 + c^2 + e^2 + 4 e f + f^2 - 4 c (e + f) + 2 d (1 + e + f))). 
\end{align*}
In this case, for any non-negative integer $a$, we have 
\[ a_2( K(a,a+1,0,0,a+1,0)) = w_3(K(a,a+1,0,0,a+1,0)) = 0.\]

For a knot $K$ in the right side of Figure~\ref{fig:alt}, we have 
\begin{align*}
    a_2(K(a,b,c,d,e,f)) &= c(1-e-f) + a (1 + b + c - d - f) + b (1 + c - d - e) - 2 (d + e + f), \\ 
    w_3(K(a,b,c,d,e,f)) &= \dfrac{1}{4}(-2 + c + c^2 - 6 d - 4 c d + 2 d^2 + a^2 (1 + b + c - d - f) - 6 f(1+c) - c^2 f + 2 d f \\
        & \quad + f^2(2+c) + b^2 (1 + c - d - e) - ce (6 + c - 2 f) + 2e (-3 + d + f)  + e^2(2 + c) \\ 
        & \quad + a (1 + b^2 + 4 b c + c^2 - 6 d - 6 f + (d + f)^2 - 4 e - 2 b (-2 + 2 d + e + f)  \\ 
        & \quad - 2 c (-2 + d + e + 2 f)) + b (1 + c^2 - 6 d  - 6 e - 4 f + (d + e)^2 - 2 c (-2 + d + 2 e + f))). 
\end{align*}
One of strategies to attack these knots is using finite type invariant obstructions of higher order as studied by Ito~\cite{ItoTetsuya}.


\subsection{Proof of Theorem~\ref{thm:Montesinos} (PCSC for Montesinos knots)}

\begin{proof}[Proof of Theorem~\ref{thm:Montesinos}]
Assume that a non-trivial Montesinos knot $K$ admits purely cosmetic surgeries. 
By Lemma~\ref{lem:Hanselman}, we have $g(K) = 2$. 
Then, since $K$ is neither a pretzel knot nor a two-bridge knot by \cite{StipsiczSzabo} nor \cite{IchiharaJongMattmanSaito}, $K$ is contained in the list of Proposition~\ref{prop:Monteg2}. 

Suppose that $K$ is contained in (e2) or (e3). 
Let $a, b, c, d, e$ be non-zero integers. 

\begin{claim}\label{clm:e2}
	For $K = K(a,b,c) = M\left([2], [-2,2a] ,[2,-2b], [-2,2c] \right)$, $a_2(K) = -2b$. 
\end{claim}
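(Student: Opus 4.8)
The plan is to compute $a_2$ by repeatedly applying the Conway skein relation~\eqref{eq:a2skein} to the crossings in one of the twist regions of $K(a,b,c)$, exactly as was done in the proof of Claim~\ref{clm:a2w3}. First I would identify a convenient twist region to unwind; the tangle $[-2,2a]$ or $[2,-2b]$ contains a two-strand twist box on which the skein relation is well-adapted. Smoothing a crossing in such a box produces a two-component link $L_0 = K' \cup K''$, and the recursion $a_2(K_n) - a_2(K_{n-1}) = \pm\lk(K',K'')$ reduces the number of twists by one at each step.

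The key steps, in order, are as follows. First, choose the twist box (say the one carrying the parameter $b$) and determine the sign of its crossings together with the linking number $\lk(K',K'')$ of the link obtained by smoothing; the latter is a combinatorial count depending only on how the remaining tangles $[2]$, $[-2,2a]$, $[-2,2c]$ close up. Second, iterate~\eqref{eq:a2skein} the appropriate number of times to reduce to a base case $K(a,0,c)$ (or whichever parameter has been driven to zero). Third, recognize the base-case knot as a simpler Montesinos or double twist knot whose $a_2$ is either zero or computable via Lemma~\ref{lem:a2TK}. Fourth, assemble the linear recursion into the closed form, which should collapse to the stated answer $a_2(K) = -2b$.

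The main obstacle I anticipate is bookkeeping the linking numbers and crossing signs correctly after the continued-fraction tangles are closed into the Montesinos knot. The subtractive continued-fraction conventions~\eqref{eq:HM3}--\eqref{eq:HM8} and the orientation of the strands through each rational tangle must be tracked carefully, since an error in the sign of $\lk$ or in identifying the base knot would corrupt the final coefficient. In particular, the fact that the answer is independent of $a$ and $c$ suggests that the contributions from those tangles either cancel or yield linking number zero upon smoothing the $b$-twists, and verifying this cancellation is the delicate part. Once the correct base case and linking data are pinned down, the remaining arithmetic is the same elementary summation as in Claim~\ref{clm:a2w3} and presents no conceptual difficulty.
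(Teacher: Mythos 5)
Your toolkit is the right one (the skein recursion \eqref{eq:a2skein} together with Lemma~\ref{lem:a2TK}), but as written the plan has a genuine gap at the base case, and that gap is exactly where the content of the claim lives. If you unwind the twist box carrying $b$, the slot $[2,-2b]$ has fraction $2b/(4b+1)$, which degenerates at $b=0$ to the $0$-tangle; the base case is therefore $K(a,0,c)=M\left([2],[-2,2a],[-2,2c]\right)$, a Montesinos knot of length three and of even type. This is \emph{not} a double twist knot, so Lemma~\ref{lem:a2TK} does not apply to it, and the claim forces $a_2(K(a,0,c))=0$ for all $a,c$ --- a fact that itself requires either two further nested skein recursions (in $a$, then $c$), each with its own linking-number bookkeeping, or the one idea your outline does not contain. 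So your step three (``recognize the base-case knot as a simpler Montesinos or double twist knot whose $a_2$ is either zero or computable via Lemma~\ref{lem:a2TK}'') fails as stated, and the ``delicate cancellation'' in $a$ and $c$ that you flag but defer is precisely what has to be proved; leaving it unverified means the proposal is an outline rather than a proof.

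The paper's proof sidesteps all of this with a single application of \eqref{eq:a2skein}, applied not in any of the boxes carrying $a$, $b$, $c$, but at the first tangle $[2]$: one crossing change there undoes the clasp that holds the Montesinos closure together, and the resulting knot is the connected sum $\mathrm{DT}(-2,2a)\,\sharp\,\mathrm{DT}(2,-2b)\,\sharp\,\mathrm{DT}(-2,2c)$. Since $a_2$ is additive under connected sum, Lemma~\ref{lem:a2TK} gives $-a-b-c$ for that knot; the smoothed two-component link has linking number $-a+b-c$, and the skein relation yields
\begin{equation*}
a_2(K) \;=\; (-a-b-c) - (-a+b-c) \;=\; -2b
\end{equation*}
in one step, with no recursion and no base-case analysis. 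Your route can in principle be completed --- each recursion is linear with constant linking number, and the $a$- and $c$-contributions do cancel --- but until you actually compute those linking numbers and evaluate the base case (which, note, would most naturally be done by the very connected-sum trick above), the argument is not yet there, and even when completed it is strictly more work than the one-crossing-change computation.
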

\begin{proof}
	Applying the skein relation~\eqref{eq:a2skein} to the full-twist of the first tangle, 
	\begin{align*}
		a_2\(K(a,b,c)\) 
		&= a_2\( \mathrm{DT}(-2,2a) \sharp \mathrm{DT}(2,-2b) \sharp \mathrm{DT}(-2,2c) \) - (-a+b-c) \\ 
		&= -a -b -c +a -b +c \\
		&= -2b. 
	\end{align*}
    Here $\sharp$ denotes the connected sum of knots. 
    Note that $a_2$ is additive under the connected sum. 
\end{proof}

\begin{claim}\label{clm:e3}
	For $K = K(a) = M\left([3, 2a+1], [-3] ,[3], [-3] \right)$, $a_2(K) = 2$. 
\end{claim}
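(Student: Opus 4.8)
The plan is to follow the strategy of Claim~\ref{clm:e2}: use the Conway-polynomial skein relation \eqref{eq:a2skein} to reduce $K(a)$ to pieces whose $a_2$ is already known. The essential difference is that here the parameter $a$ sits inside the twist region of the first tangle $\frac{2a+1}{6a+2} = [3,2a+1]$ rather than in a single full twist, so instead of one skein move I would set up a recursion in $a$. First I would locate the $2a+1$ half-twists of this tangle and apply \eqref{eq:a2skein} to one crossing there. Changing that crossing and cancelling by a Reidemeister~II move sends $K(a)$ to $K(a\mp1)$, so I obtain
\[
a_2(K(a)) = a_2(K(a\mp1)) + \mathrm{lk}\!\left(L_0^{(a)}\right),
\]
where $L_0^{(a)}$ is the oriented resolution of the chosen crossing. (Equivalently, one may phrase this via the anti-parallel twist bookkeeping already used in the proof of Lemma~\ref{lem:w3twists}, where $a_2(K_k) = a_2(K_0) - k\,\mathrm{lk}$.)

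The crux, and the step I expect to be the main obstacle, is to identify the two-component link $L_0^{(a)}$ and to show $\mathrm{lk}(L_0^{(a)}) = 0$ for every $a$. The two components are traced by the two strands entering the variable twist region, closed up through the three fixed tangles $[-3]$, $[3]$, $[-3]$ and the remaining crossings of the first tangle; the delicate part is tracking the orientations and determining which fixed tangles are traversed by a single component (contributing nothing to the linking number) and which by both. I expect the alternating signs $-\tfrac13, \tfrac13, -\tfrac13$ to force the surviving contributions to cancel. Note that the asserted $a$-independence of the value is exactly what this vanishing produces: were $\mathrm{lk}(L_0^{(a)})$ a nonzero constant, $a_2(K(a))$ would grow linearly in $a$, so establishing $\mathrm{lk}(L_0^{(a)}) = 0$ is genuinely the heart of the argument and yields that $a_2(K(a))$ is constant.

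Granting constancy, it remains to evaluate $a_2$ at one admissible value. I would take $a=1$, so $K(1) = M([3,3],[-3],[3],[-3])$, and reduce it by a finite sequence of applications of \eqref{eq:a2skein} to knots whose $a_2$ is known, namely connected sums of double twist knots, exactly as in the proof of Claim~\ref{clm:e2}. Lemma~\ref{lem:a2TK} together with the additivity of $a_2$ under connected sum then yields the value $2$. Combining this base case with the constancy established above gives $a_2(K(a)) = 2$ for all $a \neq 0,-1$, as claimed.
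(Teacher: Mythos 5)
Your proposal is essentially the paper's own proof: the paper likewise applies the skein relation \eqref{eq:a2skein} to a crossing in the twist box labeled $2a+1$, obtaining the recursion $a_2(K(a)) = a_2(K(a-1))$ (i.e.\ the linking number of the smoothed two-component link vanishes --- the fact you correctly single out as the heart of the matter, which the paper asserts implicitly), and then evaluates a base case directly. The only cosmetic difference is the anchor point: the paper computes $a_2(K(0)) = 2$, where Equation~\eqref{eq:HM6} gives $[3,1] = [2]$ and simplifies the direct calculation, whereas you propose $a = 1$.
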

\begin{proof}
	Applying the skein relation~\eqref{eq:a2skein} to the twist box labeled $2a+1$, 
	we have 
	\[ a_2(K(a)) = a_2(K(a-1)) . \]
	By a direct calculation, we have $a_2(K(0)) = 2$. 
\end{proof}

Then, by Claims~\ref{clm:e2} and \ref{clm:e3}, $a_2(K) \ne 0$ which contradicts to Lemma~\ref{lem:IchiharaWu2}. 
This completes the proof of Theorem~\ref{thm:Montesinos}. 
\end{proof}

\subsection{Proof of Theorem~\ref{thm:altMontesinos} (PCSC for alternating Montesinos knots)}

The following lemma is useful to study alternating Montesinos knots, which follows from \cite[Theorem 10]{LickorishThistlethwaite} and \cite[Theorem B]{Murasugi87}. 

\begin{lemma}\label{lem:LT}
	Let $K$ be an alternating Montesinos knot. 
	Then a reduced Montesinos diagram of $K$ is an alternating diagram. In particular, all the rational tangles in $K$ have the same sign. 
\end{lemma}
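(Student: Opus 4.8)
The plan is to establish the two assertions separately: first the substantive claim that a reduced Montesinos diagram of an alternating Montesinos knot $K$ is already an alternating diagram, and then the \emph{in particular} statement on signs as a local consequence. Throughout I would fix the standard reduced Montesinos diagram $D$ of $K$ arising from its rational-tangle decomposition $M(\beta_1/\alpha_1,\dots,\beta_r/\alpha_r)$, normalized so that each $\alpha_i>0$; the sign of the $i$-th rational tangle then means the sign of $\beta_i$.

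For the main assertion I would argue by comparing crossing numbers through the span of the Jones polynomial. On one side, by the adequacy of reduced Montesinos diagrams \cite[Theorem 10]{LickorishThistlethwaite}, the span of $V_K$ can be read off directly from $D$, and an adequate diagram realizes the crossing number; on the other side, since $K$ is alternating, $c(K)$ equals the span of $V_K$ and is realized by a reduced alternating diagram, by \cite[Theorem B]{Murasugi87} together with the Kauffman--Murasugi--Thistlethwaite resolution of the Tait minimality conjecture. Matching these two descriptions of the span pins $D$ down: a reduced Montesinos diagram whose rational tangles carry mixed signs is not alternating and admits crossing-reducing simplifications at the tangle junctions, so it fails the adequacy equality and its crossing count strictly exceeds the span of $V_K$. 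This is incompatible with $K$ being alternating, and hence $D$ must have all its tangles of one sign, in which case $D$ is alternating.

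The sign statement is then a purely local check at the $r-1$ junctions where consecutive rational tangles are joined. Each rational tangle has a standard diagram that is alternating internally, with the over/under handedness at its four endpoints dictated by the sign of $\beta_i$; writing out the two-in--two-out strand pattern where tangle $i$ meets tangle $i+1$ shows that the alternating condition survives the junction precisely when the two adjacent tangles have equal sign. Since $D$ is alternating, this must hold at every junction, so all the rational tangles share a common sign.

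I expect the crossing-number comparison in the second paragraph to be the main obstacle. The delicate point is to rule out that some \emph{mixed}-sign reduced Montesinos diagram could nonetheless be minimal for an alternating knot; this is exactly where the quantitative content of \cite[Theorem 10]{LickorishThistlethwaite} (pinning down $c(K)$ via adequacy of the Montesinos diagram) and \cite[Theorem B]{Murasugi87} (equating $c(K)$ with the span of $V_K$) has to be invoked, and the careful tracking of how crossings behave across the tangle junctions is where the genuine work lies.
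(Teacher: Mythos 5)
Your overall route --- combining \cite[Theorem 10]{LickorishThistlethwaite} with \cite[Theorem B]{Murasugi87} --- is exactly the one the paper intends: it states the lemma follows from these two results and offers no further proof. But the mechanism you propose at the crux is wrong. You claim that a reduced Montesinos diagram with mixed-sign tangles ``admits crossing-reducing simplifications at the tangle junctions, so it fails the adequacy equality and its crossing count strictly exceeds the span of $V_K$.'' This inverts the content of the Lickorish--Thistlethwaite theorem: \emph{every} reduced Montesinos diagram is adequate, mixed signs included, and adequate diagrams realize the crossing number of the link. That adequacy holds without any sign hypothesis is precisely how Lickorish and Thistlethwaite computed the crossing numbers of \emph{non-alternating} Montesinos links; a mixed-sign reduced Montesinos diagram admits no crossing-reducing simplification whatsoever (the reduced diagram of a non-alternating pretzel link $P(p,q,-r)$, for instance, is minimal). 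So the strict inequality $c(D) > \operatorname{span} V_K$ that you want to extract from reducibility is not available by that route, and as written the second paragraph of your proof does not close.

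The correct contradiction comes from the equality case of the span bound, not from reducibility. Let $D$ be a reduced Montesinos diagram of the alternating knot $K$. Adequacy gives $c(D)=c(K)$; since $K$ is alternating, $c(K)=\operatorname{span} V_K$ by Kauffman--Murasugi--Thistlethwaite. Hence $D$ is a connected (prime) diagram attaining equality in $\operatorname{span} V_K \le c(D)$, and equality there holds only for reduced \emph{alternating} diagrams --- equivalently, by \cite[Theorem B]{Murasugi87}, a minimal diagram of a prime alternating link is itself alternating. This forces $D$ to be alternating directly, with no case analysis on signs; a mixed-sign reduced Montesinos diagram of $K$ would be a minimal non-alternating diagram with $c(D)=\operatorname{span} V_K$, which is impossible. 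Your final paragraph --- the local check at the tangle junctions showing that an alternating $D$ forces all rational tangles to carry the same sign --- is fine and correctly delivers the ``in particular'' clause once the main assertion is repaired as above.
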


However, to consider alternating Montesinos knots of genus two, it is not enough to consider the Montesinos diagrams described in Theorem~\ref{thm:Montesinos} to be alternating. 
For example, for a knot contained in (o1), if $a,c,d,e \ge 1$ and $b \le -1$, then the Montesinos diagram is alternating. 
On the other hand, if $a,b,c,d,e \ge 1$, then the Montesinos diagram is not alternating, although the knot admits an alternating diagram (obtained by modifying the diagram in the rational tangle $[2a+1,2b]$). 


\begin{proof}[Proof of Theorem~\ref{thm:altMontesinos}]
Let $K$ be an alternating Montesinos knot of genus two. 
Recall that if $\sigma(K) \ne 0$, then $K$ admits no purely cosmetic surgeries (cf.\ Equation~\eqref{eq:tau-sig}). 
We proceed with the proof by dividing the cases according to which family $K$ is included in. 
Note that it is enough to consider the families (o1), (o1'), (o2), (o3), (o3'), (o4), (o4'), (o5) and (e1) listed in Proposition~\ref{prop:Monteg2}.

\begin{description}[leftmargin=30pt]
\item[(o1)]
Suppose that $K$ is contained in the family (o1). 
By taking a mirror image if necessary, we may assume that $a \ge 1$. 
Then $c,d,e \ge 1$ by Lemma~\ref{lem:LT}. 
\begin{enumerate}[leftmargin=17pt]
	\item 
	Suppose that $b \le -1$. 
	Then the diagram in Figure~\ref{fig:o1} becomes alternating. 
	By Lemma~\ref{lem:signature}, 
	\[ \sigma(K) = (2 |b| -1 +4) - 2|b| -1 = 2 \, . \]
	\item 
	Suppose that $b \ge 1$. 
	Then the diagram in Figure~\ref{fig:o1} is a negative diagram, that is, all crossings are negative. 
    Then, $\sigma(K) >0$ holds~\cite{Przytycki89,Traczyk88}. 
\end{enumerate}
\begin{figure}[htb]
    \centering
	\begin{overpic}[width=.35\textwidth]{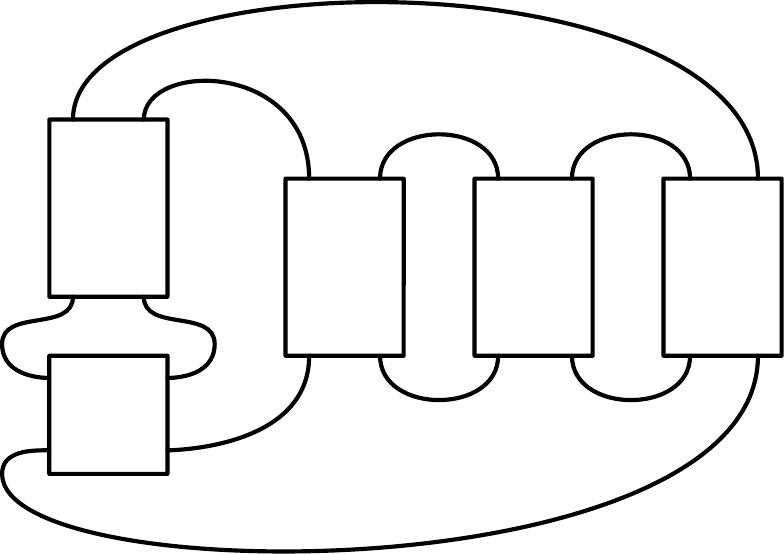}
		\put(11,52){\rotatebox{-90}{$2a+1$}}
		\put(10.5,16){$2b$}
		\put(41,45){\rotatebox{-90}{$2c+1$}}
		\put(65,45){\rotatebox{-90}{$2d+1$}}
		\put(90,45){\rotatebox{-90}{$2e+1$}}
	\end{overpic}
	\caption{(o1)}
	\label{fig:o1}
\end{figure}

\item[(o1')]
Suppose that $K$ is contained in (o1'). 
If $K = M([2a+1,2b], [2c+1],[2d+1],[1])$, then we can see that $\sigma(K) \ne 0$ by the same argument as in the case of (o1). 
Assume that 
\[ K = M([2a+1,2b], [2c+1],[2d+1],[-1]). \] 
Then $K$ is equivalent to $M([2a+1,2b], [2c+1],[-1, 2d])$. 
\begin{enumerate}[leftmargin=17pt]
	\item 
	Suppose that $d \ge 1$. 
	Then $a, c \le -2$ by Lemma~\ref{lem:LT}. 
	\begin{enumerate}[leftmargin=17pt]
		\item Assume that $b \ge 1$. 
		Since $K$ admits an alternating diagram as in Figure~\ref{fig:o1-1}, by Lemma~\ref{lem:signature}, 
		\[ \sigma(K) = (2 |a| -2 +2 |c| -2 +4) -(2|a| -1 + 2|c|-1+1) -1 =0. \]
		We will treat this case later. 
		\item Assume that $b \le -1$. 
		Since $K$ admits an alternating diagram as in Figure~\ref{fig:o1-2}, by Lemma~\ref{lem:signature}, 
		\begin{align*}
			\sigma(K) &= (2 |a| -3 +2 |b| -2 + 2|c| -2 +4) -(2|a| -2 +2|b| -1 + 2|c|-1 + 2) -1= -2. 
		\end{align*}
	\end{enumerate}
	\item Suppose that $d \le -2$. 
	Then we may assume that $c \le -2$ since the case where $c \ge 1$ is the same as (i) by replacing the labels $c$ and $d$. 
    By using a flype, we see that $K=M([2a+1,2b],[-1],[2c+1],[2d+1])$. 
	\begin{enumerate}[leftmargin=17pt]
		\item Assume that $a \ge 1$ and $b \ge 1$. 
		Then we see that $\sigma(K) = 0$ by the same argument, thus we treat these knots later. 
		\item Assume that $a \ge 1$ and $b \le -1$. 
        Since $K$ admits an alternating diagram as in Figure~\ref{fig:o1-3}, by using Lemma~\ref{lem:signature}, we see that $\sigma(K) = -2$. 
	   	\item Assume that $a \le -1$ and $b \le -1$. 
        Since $K$ admits an alternating diagram as in Figure~\ref{fig:o1-4}, by using Lemma~\ref{lem:signature}, we see that $\sigma(K) = -4$.
		In fact, $K$ is a positive knot, and thus, $\sigma(K) < 0$. 
        \item Assume that $a \le -1$ and $b \ge 1$. 
		Then the knot is obtained by two banding from the knot in the case (ii-3). 
        In fact, knots in the case (ii-3) and the case (ii-4) differ only by twists related to the parameter $b$. 
        These two knots can be transformed into the same two-component link as depicted in Figure~\ref{fig:o1-6} by a banding. 
		Since the signature of the knot in the case (ii-3) is $-4$, by Lemma~\ref{lem:signatureBand}, $\sigma(K) \le -2$ holds. 
	\end{enumerate}
\end{enumerate}
Here we treat the alternating Montesinos knots contained in (o1') with $\sigma=0$. 
First we treat the knots of (o1')-(i-1). 
Let 
\[ K = M([2a+1,2b], [2c+1],[-1, 2d]) \text{\ with  } a, c \le -2 \text{ and } \ b,d  \ge 1.\]
By taking the mirror image, $K$ is changed to 
\[ M([2a+1,2b], [2c+1],[1, 2d]) \text{\ with  } a, c \ge 1 \text{ and } \ b,d  \le -1, \]
which is isotopic to $M([2a+1,2b],[1, 2d],[2c+1])$. 
Changing parameter $c$ to $e$, the knot is contained (o2) with $c = 0$. 
Then $K$ admits no purely cosmetic surgeries by Claim~\ref{clm:o2}.

Next we treat the knots of (o1')-(ii-1). 
Let 
\[ K = M([2a+1,2b], [-1], [2c+1],[2d+1]) \text{\ with  } a, b \ge 1 \text{ and } \ c,d  \le -2.\]
Then $K$ is equivalent to the knot depicted in the right side of Figure~\ref{fig:o1-5}, which is obtained by $\overline{t_2'}$~moves on $6_3$. 
Precisely, it is equivalent to the knot in the left side of Figure~\ref{fig:alt} with $a,b \ge 0$, $c,d =0$, and $e,f \ge 1$.

\begin{figure}[htb]
\begin{minipage}[b]{0.48\columnwidth}
    \centering
	\begin{overpic}[width=.6\textwidth]{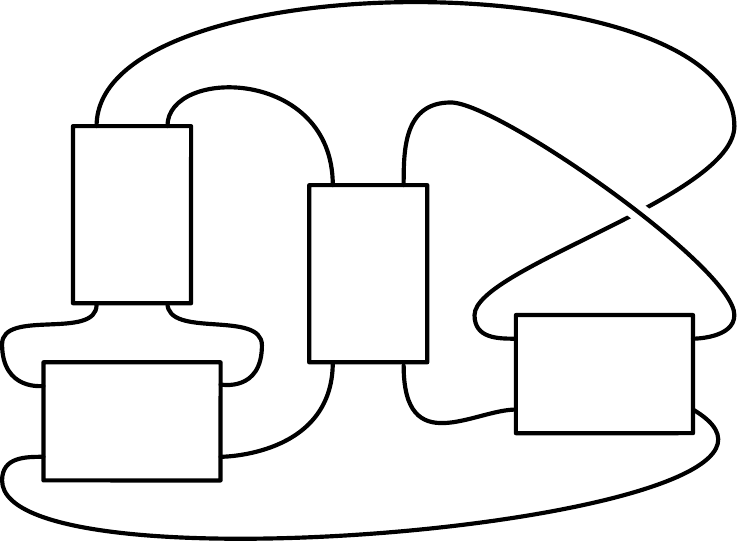}
		\put(15,53){\rotatebox{-90}{$2a+1$}}
		\put(14,13){$2b$}
		\put(46.5,45){\rotatebox{-90}{$2c+1$}}
		\put(78,19.5){$2d$}
	\end{overpic}
	\caption{(o1')-(i-1).}
	\label{fig:o1-1}
  \end{minipage}
  \begin{minipage}[b]{0.48\columnwidth}
    \centering
	\begin{overpic}[width=.6\textwidth]{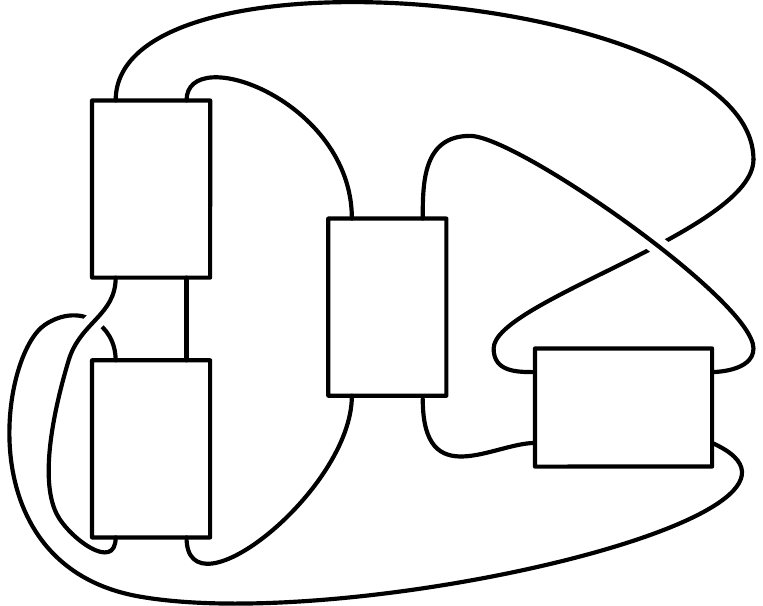}
		\put(17,65){\rotatebox{-90}{$2a+2$}}
		\put(17,30){\rotatebox{-90}{$2b+1$}}
		\put(48,49){\rotatebox{-90}{$2c+1$}}
		\put(78,23){$2d$}
	\end{overpic}
	\caption{(o1')-(i-2).}
	\label{fig:o1-2}
\end{minipage}
\end{figure}
\begin{figure}[htb]
\begin{minipage}[b]{0.48\columnwidth}
    \centering
	\begin{overpic}[width=.6\textwidth]{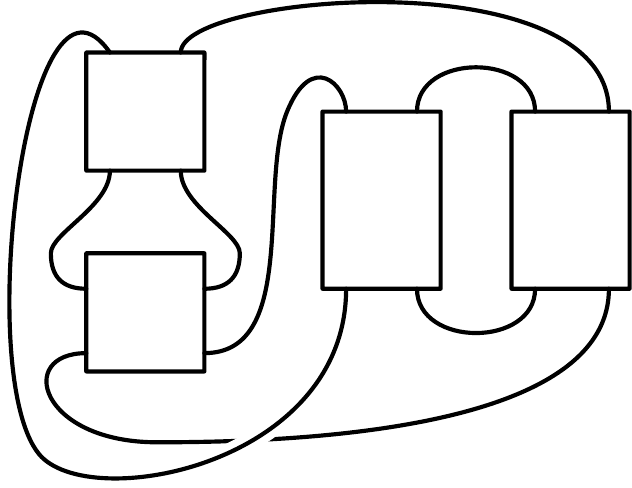}
		\put(20,63){\rotatebox{-90}{$2a$}}
		\put(19,25){$2b$}
		\put(57.5,55.5){\rotatebox{-90}{$2c+1$}}
		\put(88,55.5){\rotatebox{-90}{$2d+1$}}
	\end{overpic}
	\caption{(o1')-(ii-2).}
	\label{fig:o1-3}
  \end{minipage}
  \begin{minipage}[b]{0.48\columnwidth}
    \centering
	\begin{overpic}[width=.55\textwidth]{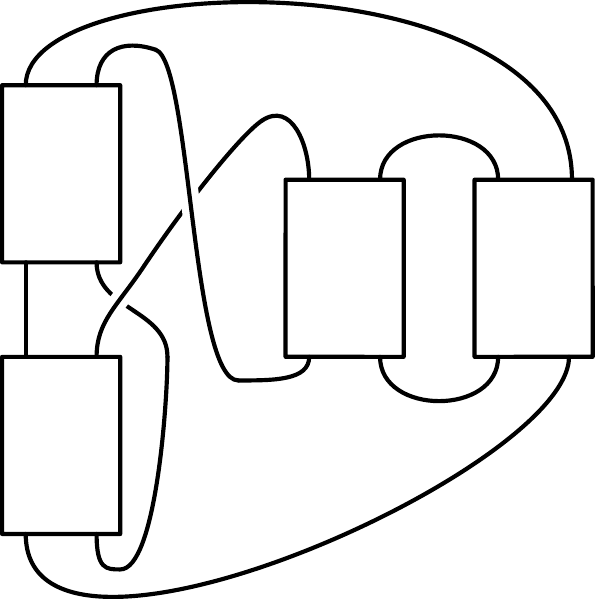}
		\put(7,83){\rotatebox{-90}{$2a+2$}}
		\put(7,37){\rotatebox{-90}{$2b+1$}}
		\put(54,67){\rotatebox{-90}{$2c+1$}}
		\put(85,67){\rotatebox{-90}{$2d+1$}}
	\end{overpic}
	\caption{(o1')-(ii-3).}
	\label{fig:o1-4}
\end{minipage}
\end{figure}
\begin{figure}[htb]
    \centering
	\begin{overpic}[width=\textwidth]{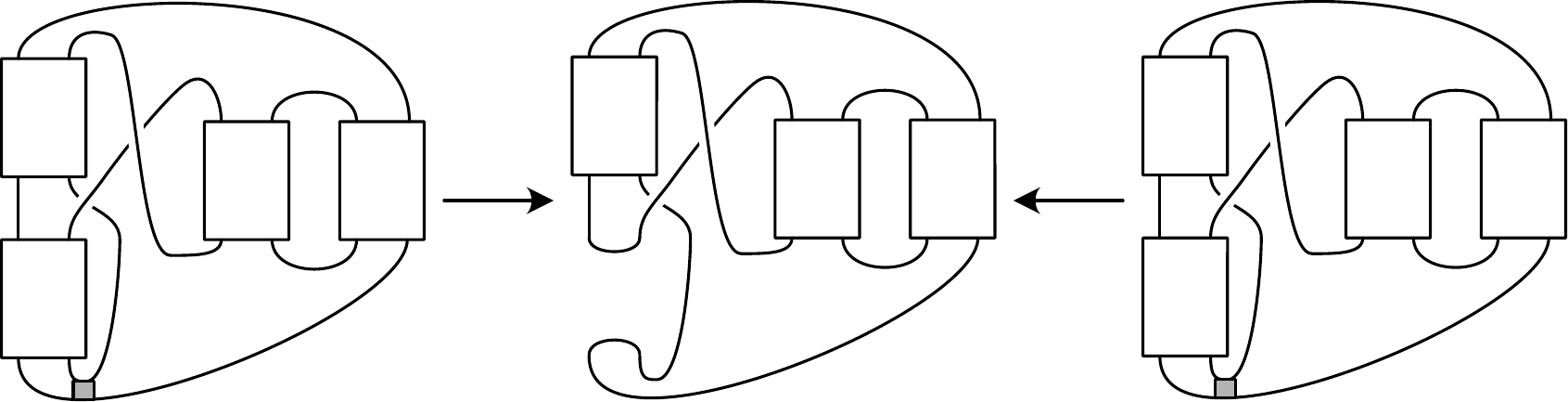}
		\put(2,21.4){\rotatebox{-90}{$2a+2$}}
		\put(2,9.7){\rotatebox{-90}{$2b+1$}}
		\put(14.8,17){\rotatebox{-90}{$2c+1$}}
		\put(23.5,17){\rotatebox{-90}{$2d+1$}}
  		\put(38.2,21.4){\rotatebox{-90}{$2a+2$}}
		\put(51.2,17){\rotatebox{-90}{$2c+1$}}
		\put(59.8,17){\rotatebox{-90}{$2d+1$}}
  		\put(74.6,21.4){\rotatebox{-90}{$2a+2$}}
    	\put(74.6,10.1){\rotatebox{-90}{$-2b-1$}}
		\put(87.8,17){\rotatebox{-90}{$2c+1$}}
		\put(96.4,17){\rotatebox{-90}{$2d+1$}}
  		\put(28,14.3){banding}
   		\put(64.5,14.3){banding}
  	\end{overpic}
	\caption{Knots in (o1')-(ii-4) and (o1')-(ii-3) can be transformed into the same two-component link by a banding.}
	\label{fig:o1-6}
\end{figure}
\begin{figure}[htb]
    \centering
	\begin{overpic}[width=.7\textwidth]{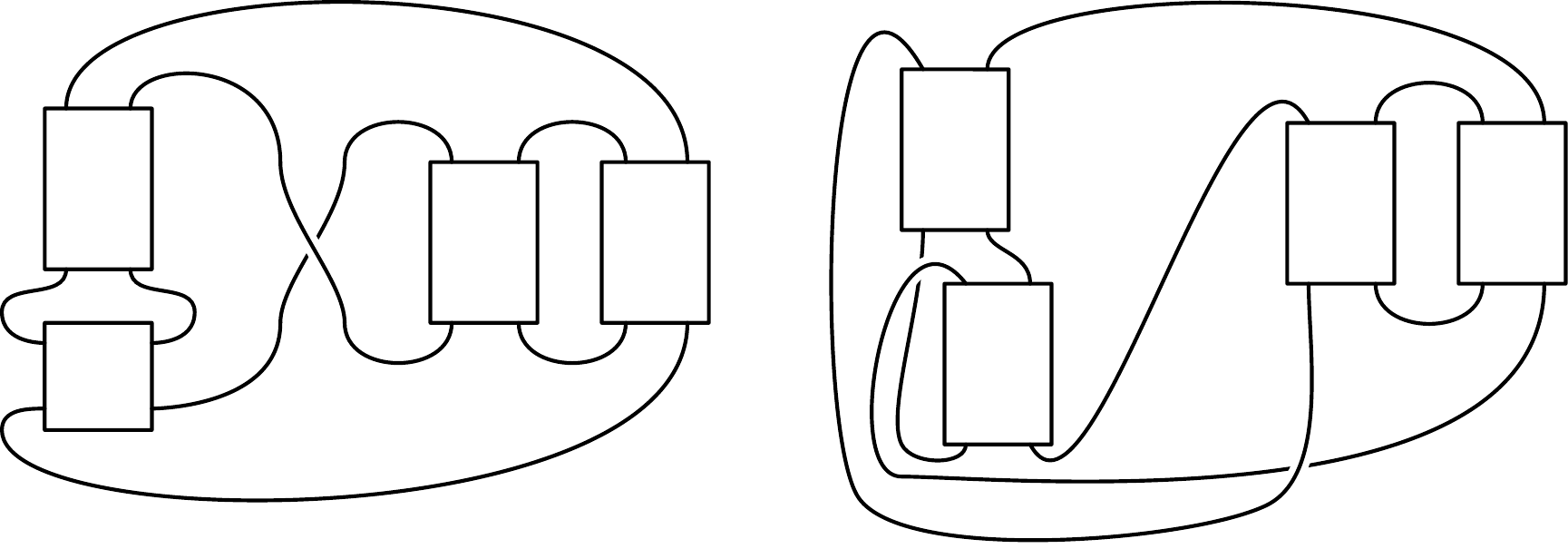}
		\put(5,27){\rotatebox{-90}{$2a+1$}}
		\put(5,10){$2b$}
		\put(30,23.5){\rotatebox{-90}{$2c+1$}}
		\put(41,23.5){\rotatebox{-90}{$2d+1$}}
        \put(48,18){$=$}
  		\put(60,29.5){\rotatebox{-90}{$2a-1$}}
		\put(62.5,15.5){\rotatebox{-90}{$2b-1$}}
		\put(84,26){\rotatebox{-90}{$2c+1$}}
		\put(95.5,26){\rotatebox{-90}{$2d+1$}}
	\end{overpic}
	\caption{(o1')-(ii-1).}
	\label{fig:o1-5}
\end{figure}

\item[(o2)]
Suppose that $K$ is contained in (o2). 
Since $K$ is alternating, $K$ is equivalent to a knot obtained from $7_7$ by $\overline{t_2'}$~moves as discussed in the proof of Theorem~\ref{thm:alt}. 
Thus, $K$ admits no purely cosmetic surgeries.

\item[(o3)]
Suppose that $K$ is contained in (o3). 
By taking a mirror image if necessary, we may assume that $K = M([2a,-3],[2b+1],[2c+1])$. 
Then, by Lemma~\ref{lem:LT}, $b, c \ge 1$. 
\begin{enumerate}[leftmargin=17pt]
	\item Suppose that $a \ge 1$. 
	Then we see that $\sigma(K) = 4$ by using Lemma~\ref{lem:signature}. 
	\item Suppose that $a \le -1$. 
	Then the knot is obtained by two banding from the knot in the case (i). 
	By Lemma~\ref{lem:signatureBand}, $\sigma(K) \ge 2$. 
\end{enumerate}

\item[(o3')]
Suppose that $K$ is contained in (o3'). 
By taking a mirror image if necessary, we may assume that $K = M([2,-3],[2b+1],[2c+1])$. 
Then, by Lemma~\ref{lem:LT}, $b, c \ge 1$. 
Then we see that $\sigma(K) = 4$ by using Lemma~\ref{lem:signature}.

\item[(o4)]
Suppose that $K$ is contained in (o4). 
By taking a mirror image if necessary, we may assume that $a,c,d \ge 1$. 
\begin{enumerate}[leftmargin=17pt]
	\item Suppose that $K= M([2a,-2,2b+1],[2c+1],[2d+1])$. 
	\begin{enumerate}[leftmargin=17pt]
		\item Suppose that $b \ge 1$. 
        Then we see that $\sigma(K) = 4$ by using Lemma~\ref{lem:signature}. 
		\item Suppose that $b \le -2$. 
        Then the knot is obtained by two banding from the knot in the case (i-1). 
	    By Lemma~\ref{lem:signatureBand}, $\sigma(K) \ge 2$. 
	\end{enumerate}
	\item Suppose that $K= M([2a,2,2b+1],[2c+1],[2d+1])$. 
	\begin{enumerate}[leftmargin=17pt]
		\item Suppose that $b \ge 1$. 
		Then $K$ admits an alternating diagram as in Figure~\ref{fig:o4}. 
		By Lemma~\ref{lem:signature}, 
		\[ \sigma(K) = 5-2-1 = 2. \]	
		\item Suppose that $b \le -2$. 
        Then $K$ is equivalent to the knot depicted in the right side of Figure~\ref{fig:o4-1}, which is obtained by $\overline{t_2'}$~moves on $6_3$. 
        Precisely, it is equivalent to the knot in the left side of Figure~\ref{fig:alt} with $a \ge 0$, $b,c,e \ge 1$, and $d = f =0$. 
	\end{enumerate}
\end{enumerate}

\item[(o4')]
Suppose that $K$ is contained in (o4'). 
By taking a mirror image if necessary, we may assume that $K= M([2,-2,2b+1],[2c+1],[2d+1])$ with $c,d \ge 1$. 
\begin{enumerate}[leftmargin=17pt]
	\item Suppose that $b \ge 1$. 
    Then we see that $\sigma(K) = 4$ by using Lemma~\ref{lem:signature}. 
	\item Suppose that $b \le -2$. 
    Then the knot is obtained by two banding from the knot in the case (i). 
	By Lemma~\ref{lem:signatureBand}, $\sigma(K) \ge 2$. 
\end{enumerate}

\begin{figure}[htb]
\begin{minipage}[b]{0.48\columnwidth}
    \centering
	\begin{overpic}[width=.6\textwidth]{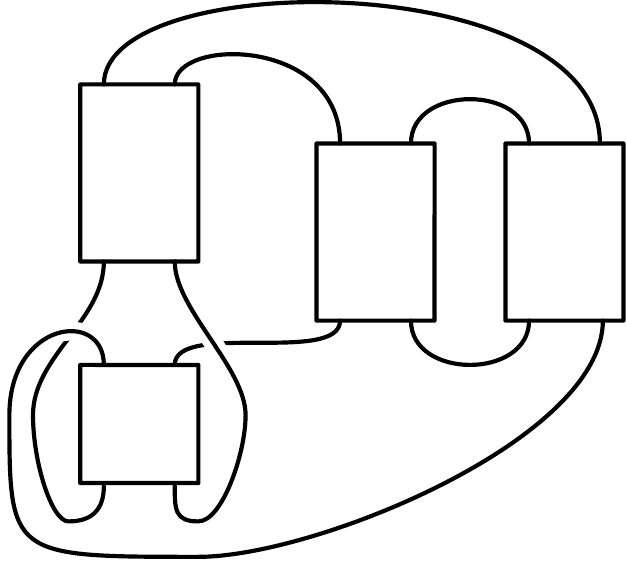}
		\put(19,73){\rotatebox{-90}{$2a-1$}}
		\put(19,26){\rotatebox{-90}{$2b$}}
		\put(57.5,64){\rotatebox{-90}{$2c+1$}}
		\put(88,64){\rotatebox{-90}{$2d+1$}}
	\end{overpic}
	\caption{(o4) with $b\ge 1$.}
	\label{fig:o4}
  \end{minipage}
  \begin{minipage}[b]{0.48\columnwidth}
    \centering
	\begin{overpic}[width=.5\textwidth]{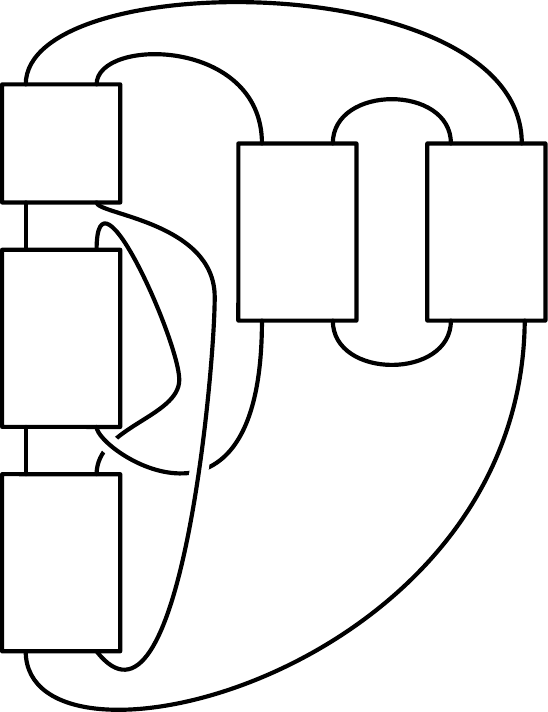}
		\put(6,83){\rotatebox{-90}{$2a$}}
		\put(6,30){\rotatebox{-90}{$2b-2$}}
		\put(6,61){\rotatebox{-90}{$2c-1$}}
		\put(39,76){\rotatebox{-90}{$2d+1$}}
        \put(66,76){\rotatebox{-90}{$2e+1$}}
	\end{overpic}
	\caption{(o5)-(i).}
	\label{fig:o5}
\end{minipage}
\end{figure}

\begin{figure}[htb]
    \centering
	\begin{overpic}[width=.7\textwidth]{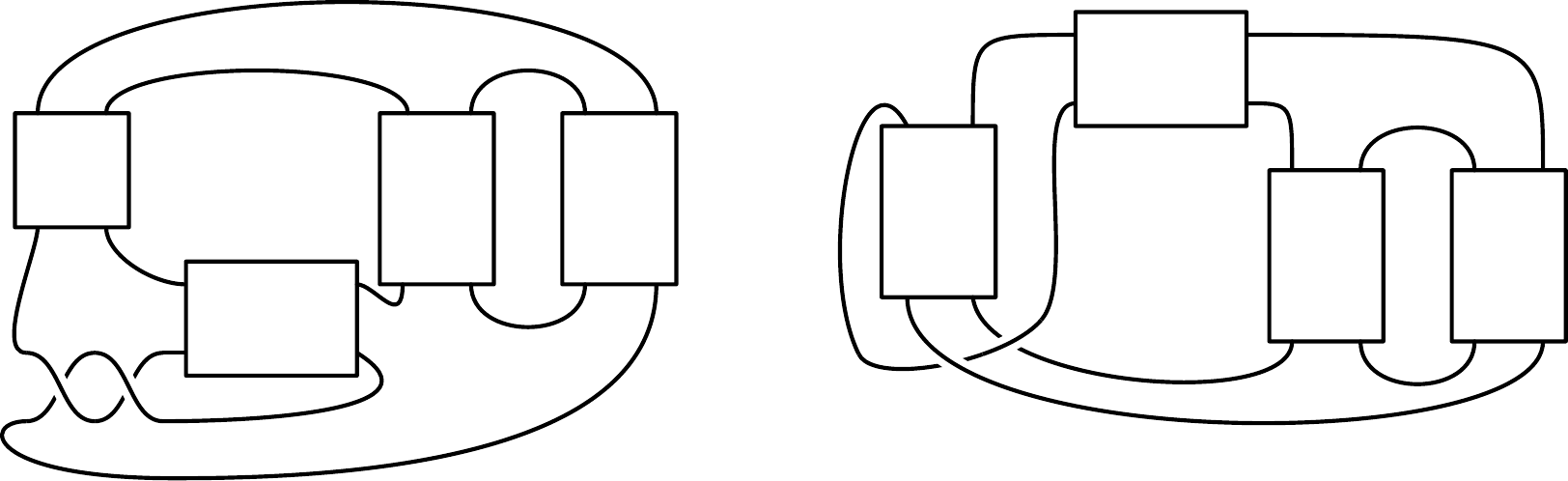}
		\put(4,21.5){\rotatebox{-90}{$2a$}}
		\put(13.5,9.5){$2b+1$}
		\put(26.5,22){\rotatebox{-90}{$2c+1$}}
		\put(38.5,22){\rotatebox{-90}{$2d+1$}}
        \put(48,15){$=$}
  		\put(69.5,25.5){$2a-1$}
		\put(58.6,21.5){\rotatebox{-90}{$2b+1$}}
		\put(83.5,18.5){\rotatebox{-90}{$2c+1$}}
		\put(95,18.5){\rotatebox{-90}{$2d+1$}}
	\end{overpic}
	\caption{(o4)-(ii-2).}
	\label{fig:o4-1}
\end{figure}

\item[(o5)]
Suppose that $K$ is contained in (o5). 
By taking a mirror image if necessary, we may assume that $a,d,e \ge 1$. 
\begin{enumerate}[leftmargin=17pt]
	\item Suppose that $b \ge 1$ and $c \ge 1$. 
	Then $K$ admits an alternating diagram as in Figure~\ref{fig:o5}. 
	By Lemma~\ref{lem:signature}, 
	\[ \sigma(K) = 5-0-1 = 4. \]	
	\item Suppose that $b \ge 1$ and $c \le -1$. 
    Then the knot is obtained by two banding from the knot in the case (i). 
	By Lemma~\ref{lem:signatureBand}, $\sigma(K) \ge 2$. 
	\item Suppose that $b \le -1$ and $c \ge 1$. 
    Then the knot is obtained by two banding from the knot in the case (i). 
	By Lemma~\ref{lem:signatureBand}, $\sigma(K) \ge 2$. 
	\item Suppose that $b \le -1$ and $c \le -1$. 
	Then $K$ is obtained from $6_3$ by $\overline{t_2'}$~moves. 
    Precisely, it is equivalent to the knot in the left side of Figure~\ref{fig:alt} with $a,b,c,d,f \ge 1$, and $e =0$. 
\end{enumerate}

\item[(e1)] 
Suppose that $K$ is contained in (e1). 
By taking a mirror image if necessary, we may assume that $a \ge 1$. 
Then $b,d \ge 1$ by Lemma~\ref{lem:LT}. 
\begin{enumerate}[leftmargin=17pt]
	\item Suppose that $c \ge 1$ and $e \ge 1$. 
	Then $K$ admits an alternating (negative) diagram, and by Lemma~\ref{lem:signature}, we see that 
	\[ \sigma(K) = 4. \] 
	\item Suppose that $b \ge 1$ and $e \le -1$. 
	Then the knot is obtained by two banding from the knot of the case (i). 
	By Lemma~\ref{lem:signatureBand}, $\sigma(K) \ge 2$. 
	\item Suppose that $b \le -1$ and $e \ge 1$. 
	The knot in this case is also obtained by two banding from the knot of the case (i). 
	By Lemma~\ref{lem:signatureBand}, $\sigma(K) \ge 2$. 
	\item Suppose that $b \le -1$ and $e \le -1$. 
	Then $K$ is obtained from $7_7$ by $\overline{t_2'}$~moves. 
	Thus, by Theorem~\ref{thm:alt}, $K$ admits no purely cosmetic surgeries. \qedhere
\end{enumerate}
\end{description}
\end{proof}

\begin{proof}[Proof of Corollary~\ref{cor:altMontesinos}]
An alternating Montesinos knot $K$ stated in Theorem~\ref{thm:altMontesinos} is contained in each of (o1'), (o4), or (o5). 
If $K$ is contained in (o1'), then it is equivalent to $M([2a+1,2b], [2c+1],[-1, 2d])$ which is a Montesinos knot of length three. 
If $K$ is contained in (o4), or (o5), then $K$ is also a Montesinos knot of length three. 
This completes the proof of Corollary~\ref{cor:altMontesinos}. 
\end{proof}

\subsection*{Acknowledgements}
The authors would like to thank Fusashi Nakamura for suggestions on the proof of Claim~\ref{clm:o2}. 
They also thank the referee for his/her careful reading and useful suggestions. 
The first author is partially supported by JSPS KAKENHI Grant Number 22K03301. 
The second author is partially supported by JSPS KAKENHI Grant Number 22K03324. 

\bibliographystyle{amsplain}
\bibliography{IJ2023}

\appendix

\setcounter{theorem}{0}
\renewcommand\thetheorem{A.\arabic{theorem}}
\addcontentsline{toc}{section}{Appendix}

\section*{Appendix}

Here, for further studies, we give calculations of $a_2$ and $w_3$ for Montesinos knots of genus two in the families (o1), (o3), (o4), (o5), (e1) listed in Proposition~\ref{prop:Monteg2}. 
First we give formulae for pretzel knots without a proof since one can prove them by elementary calculations. 
See \cite[Proposition 6.5]{IchiharaItoSaito} or \cite[Lemma 2.3]{Varvarezos22} for example.

\begin{lemma}\label{lem:pretzel}
	Let $P(2x+1,2y+1,2z+1)$ be a three-strands pretzel knot of odd type, where $x,y,z$ are integers. 
	Then 
	\begin{align*}
		a_2 ( P(2x+1,2y+1,2z+1) ) 
		&= (x+1)(y+1)(z+1) - xyz, \\ 
		w_3 ( P(2x+1,2y+1,2z+1) ) 
		&= ( xy(x+y) + yz (y+z) + zx (z+x) + 4 xyz \\ 
        & \quad + (x+y+z)^2 + 2 (xy + yz + zx) + 3(x+y+z) + 2 )/4. 
	\end{align*}
\end{lemma}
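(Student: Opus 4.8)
The plan is to establish the two formulae in Lemma~\ref{lem:pretzel} for the pretzel knot $P(2x+1,2y+1,2z+1)$ by a crossing-change induction that reduces to a base case already accessible from the double twist knot formulae of Lemmas~\ref{lem:a2TK} and~\ref{lem:w3TK}. Since the statement is symmetric in $x,y,z$, I would pick one parameter, say $z$, and view the three-strand pretzel knot as carrying anti-parallel full twists in the corresponding tangle; changing crossings there lets me vary $z$ by integer steps while keeping the other two parameters fixed.

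First I would compute $a_2$. Applying the skein relation~\eqref{eq:a2skein} to a crossing in the $z$-tangle, the smoothed link $L_0$ is a two-component link whose linking number I would read off diagrammatically (it should come out to a linear expression in $x$ and $y$, namely $\pm\bigl((x+1)(y+1)-xy\bigr)=\pm(x+y+1)$). Iterating this $|z|$ times gives $a_2$ as a linear function of $z$, and I would pin down the constant of integration using the base case $z=0$ (resp.\ $z=-1$), where $P(2x+1,2y+1,\pm 1)$ degenerates to a double twist knot and Lemma~\ref{lem:a2TK} supplies the value. Matching against the claimed symmetric polynomial $(x+1)(y+1)(z+1)-xyz$ then finishes the $a_2$ computation.

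For $w_3$ the strategy is parallel but uses Lemma~\ref{lem:w3twists}: if the two components obtained by smoothing a crossing in the $z$-tangle are both unknots, the twist formula~\eqref{eq:TwistFormula} expresses $w_3(K_z)$ in terms of $w_3(K_0)$, $a_2(K_0)$, the linking number $\lk$, and a quadratic correction in $z$. I would substitute the already-determined $a_2$ and $\lk$, evaluate the base case $w_3(K_0)$ via the double twist formula of Lemma~\ref{lem:w3TK}, and simplify. The resulting expression should agree with the stated symmetric quartic after collecting terms.

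The main obstacle I anticipate is bookkeeping rather than conceptual: verifying the unknottedness hypothesis of Lemma~\ref{lem:w3twists} for the smoothed components (so that~\eqref{eq:TwistFormula} applies with $a_2(K')=a_2(K'')=0$), correctly fixing the signs of the linking number and of the crossings (which depends on orientation conventions in the pretzel diagram), and then carrying out the algebraic simplification into the manifestly symmetric forms quoted. Because the final answers are symmetric in $x,y,z$ whereas the induction singles out $z$, a useful consistency check is to confirm the output is invariant under permuting the parameters; this both catches sign errors and, as the authors note, reduces the verification to ``elementary calculations,'' so I would ultimately present it as such rather than grinding through every term.
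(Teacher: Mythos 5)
Your plan is correct, but note that the paper itself contains no proof of Lemma~\ref{lem:pretzel}: the authors state it ``without a proof since one can prove them by elementary calculations,'' pointing to \cite[Proposition 6.5]{IchiharaItoSaito} and \cite[Lemma 2.3]{Varvarezos22}. What you outline is precisely the elementary calculation intended, and it is the same machinery the paper does spell out for the adjacent statements (Claim~\ref{clm:a2w3} in the main text and Claims~\ref{clm:o3}--\ref{clm:e1} in the appendix), namely unwinding one twist region via the skein relation~\eqref{eq:a2skein} and the twist formula of Lemma~\ref{lem:w3twists} down to a base case covered by Lemmas~\ref{lem:a2TK} and~\ref{lem:w3TK}. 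The checkpoints you flagged all go through. Writing $P(2x+1,2y+1,2z+1)$ as $z$ anti-parallel full twists inserted into $K_0 = P(2x+1,2y+1,1)$, the oriented smoothing of any crossing in that region yields a two-component link each of whose components passes exactly once through each of the two remaining odd twist regions, hence is unknotted, so \eqref{eq:TwistFormula} applies; all $(2x+1)+(2y+1)$ crossings of the smoothed link are inter-component, giving $\lk = \pm(x+y+1)$, which agrees with your $\pm\bigl((x+1)(y+1)-xy\bigr)$. The base case is the double twist knot: $P(2x+1,2y+1,1) = \mathrm{DT}(2(x+1),2(y+1))$ up to mirror (its determinant is $(2x+1)(2y+1)+(2x+1)+(2y+1) = 4(x+1)(y+1)-1$, of the required form $4uv-1$), and $z=-1$ gives $\mathrm{DT}(2x,2y)$, so $a_2(K_0)=(x+1)(y+1)$ and $w_3(K_0)=(x+1)(y+1)(x+y+2)/4$. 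With the sign convention $\lk = -(x+y+1)$ matching Figure~\ref{fig:AntiParallel}, one gets $a_2 = xy+(z+1)(x+y+1) = (x+1)(y+1)(z+1)-xyz$ and $w_3(K_z)-w_3(K_0) = \frac{z}{2}(x+1)(y+1)+\frac{z}{4}(x+y+1)(x+y+1+z)$, which expands exactly to the stated symmetric quartic; your symmetry check in $x,y,z$ then confirms no sign was lost. So your route is sound and complete modulo the handedness bookkeeping you already identified, and it is arguably more self-contained than the paper's treatment, since it stays entirely within the paper's own toolkit rather than outsourcing to the cited references.
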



Let $a, b, c, d, e$ be non-zero integers. 

\begin{claim}\label{clm:o3}
	For $K = K(a,b,c) = M\left([2a, 3], [2b+1], [2c+1] \right)$, 
    \begin{align*}
		a_2(K) &= ab + bc + ca +a -b-c, \\ 
		\begin{split} 
			w_3(K) &= \dfrac{c}{4}(1-c) -\dfrac{bc}{2} + \dfrac{b}{4}(c-1)(b+c-1) - \dfrac{a}{2} \\
			& \quad + \dfrac{1}{4}\( (a+1)(bc - b-c) + a(b+c+1)(a+b+c+1) \). 
		\end{split}
	\end{align*}
\end{claim}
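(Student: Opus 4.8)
The plan is to compute $a_2$ and $w_3$ for the knot $K(a,b,c) = M([2a,3],[2b+1],[2c+1])$ by reducing to the three-strand pretzel knot formulae in Lemma~\ref{lem:pretzel}, exactly as was done for the family (o2) in the proof of Claim~\ref{clm:a2w3}. The key observation is that the tangle $[2a,3]$ contains a twist box carrying the parameter $a$; smoothing or undoing a crossing there should relate $K(a,b,c)$ to $K(a-1,b,c)$, and iterating down to $a=0$ should leave a pretzel knot whose invariants are already known.

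First I would apply the skein relation~\eqref{eq:a2skein} to a crossing in the twist box labeled $2a$ inside the tangle $[2a,3]$. Undoing one full twist relates $a_2(K(a,b,c))$ to $a_2(K(a-1,b,c))$ plus a linking-number correction term, where the correction is the linking number of the two-component link obtained by smoothing the chosen crossing. I would identify that link and read off its linking number in terms of $b$ and $c$; iterating $|a|$ times reduces the computation to $a_2(K(0,b,c))$. The base knot $K(0,b,c)$ should simplify, via the continued-fraction identities of Section~\ref{sec:Montg2}, to a three-strand pretzel knot of odd type, at which point Lemma~\ref{lem:pretzel} gives the value directly. Matching the resulting linear-in-$a$ expression against $ab+bc+ca+a-b-c$ is then routine algebra.

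For $w_3$, I would use the twist formula of Lemma~\ref{lem:w3twists} applied to the $a$-parameter full twists, provided the hypothesis holds that smoothing a crossing there yields a two-component link each of whose components is unknotted. I would verify this triviality condition from the diagram, then set $K_n = K(a,b,c)$ with $K_0 = K(0,b,c)$, so that
\[
w_3(K(a,b,c)) = w_3(K(0,b,c)) + \tfrac{a}{2}\,a_2(K(0,b,c)) + \tfrac{a}{4}\,\lk(\lk - a),
\]
where $\lk$ is the linking number already computed in the $a_2$ step. The base values $w_3(K(0,b,c))$ and $a_2(K(0,b,c))$ come from Lemma~\ref{lem:pretzel} applied to the pretzel knot $K(0,b,c)$, and substituting everything in yields the claimed closed form after simplification.

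The main obstacle I expect is not the algebra but the geometric bookkeeping: correctly identifying the base knot $K(0,b,c)$ as a specific odd pretzel $P(2x+1,2y+1,2z+1)$ (pinning down $x,y,z$ in terms of $b,c$ and the fixed $3$ from the tangle $[2a,3]$), and correctly computing the linking number $\lk$ of the smoothed link together with checking that both of its components are genuinely unknotted so that Lemma~\ref{lem:w3twists} applies. A sign error in $\lk$ or a mislabeling of the pretzel parameters would propagate through both formulae, so I would cross-check the final expressions at a small value such as $a=0$ (where they must reduce to the pretzel values from Lemma~\ref{lem:pretzel}) and, if possible, against a direct computation for one concrete triple $(a,b,c)$.
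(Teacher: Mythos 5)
Your proposal follows the paper's overall reduction scheme, but it breaks at the one point where this family is genuinely different from (o1), (o4), (o5), (e1): the twist formula of Lemma~\ref{lem:w3twists} does \emph{not} apply to the $a$-twist region of $K(a,b,c)=M([2a,3],[2b+1],[2c+1])$. Smoothing a crossing there produces a two-component link $K'\cup K''$ one of whose components is a non-trivial knot, so the triviality hypothesis you say you ``would verify'' in fact fails. The paper states this explicitly (``we cannot use Lemma~\ref{lem:w3twists} directly since one of the components of $K'\cup K''$ is non-trivial'') and instead iterates Lescop's crossing-change formula~\eqref{eq:w3skein} directly, keeping track of the sums $\sum_k a_2(K(k,b,c))$ over the intermediate knots; the term $\frac{a_2(K')+a_2(K'')}{2}$, with $a_2(K')+a_2(K'')=-1$, is precisely the source of the $-\frac{a}{2}$ in the claimed formula. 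Your contingent expression $w_3(K(0,b,c))+\frac{a}{2}a_2(K(0,b,c))+\frac{a}{4}\lk(\lk-a)$ omits this contribution and therefore yields the wrong answer; worse, since the missing term vanishes at $a=0$, your proposed cross-check at $a=0$ would not detect the error --- only a direct computation at some $a\neq 0$ would.

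The base case is also misidentified. Setting $a=0$ makes the first tangle $[0,3]$, whose fraction is $1/(0-\frac13)=-3$, an \emph{integer} tangle; hence $K(0,b,c)$ is a length-two Montesinos knot, i.e.\ a two-bridge knot, not an odd three-strand pretzel, and Lemma~\ref{lem:pretzel} does not apply to it. Consistently, the paper's value $a_2(K(0,b,c))=bc-b-c$ is not of the pretzel form $(x+1)(y+1)(z+1)-xyz$ under any constant identification of parameters (the natural guess $P(-3,2b+1,2c+1)$ gives $bc-b-c-1$). The paper sidesteps this by not stopping at $a=0$: it continues the skein reduction through the $b$- and $c$-twist boxes down to the unknot $K(0,0,0)$, using Equation~\eqref{eq:a2skein} at each stage (with linking numbers $b+c+1$, $c-1$, and $-1$), and uses Lemma~\ref{lem:w3twists} only for the $b$- and $c$-twists, where the smoothed components \emph{are} unknots. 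Your $a_2$ computation can be repaired along these lines, since Equation~\eqref{eq:a2skein} needs only linking numbers; but the $w_3$ computation requires the genuinely different mechanism above, which is the actual content of the paper's proof of Claim~\ref{clm:o3}.
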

\begin{proof}
	Note that $K(0,0,0)$ is equivalent to the unknot. 
	First we calculate $a_2$. 
	Applying the skein relation~\eqref{eq:a2skein} repeatedly, 
	\begin{align*}
		a_2\(K(a,b,c)\) 
		&= a_2\(K(0,b,c)\) + a(b+c+1) \\ 
		&= a_2\(K(0,0,c)\) + b(c-1) + a(b+c+1) \\ 
		&= -c + b(c-1) + a(b+c+1) \\ 
		&= ab + bc + ca +a -b-c. 
	\end{align*} 
	Next we calculate $w_3$. 
	In this case, we cannot use Lemma~\ref{eq:TwistFormula} directly since one of the components of $K' \cup K''$ is non-trivial. 
	Using Equation~\eqref{eq:w3skein} repeatedly, we have  
	\begin{align*}
		w_3(K(a,b,c)) 
		&= w_3(K(0,b,c)) - \dfrac{a}{2} \\ 
		&\quad + \dfrac{1}{4} \( \sum_{k=0}^a a_2\(K(k,b,c)\) + \sum_{k=1}^{a-1} a_2\(K(k,b,c)\) + a (b+c+1)^2 \). 	
	\end{align*}
	Since $a_2\(K(a,b,c)\) = a(b+c+1) + bc -b -c$, we have 
	\begin{align*}
		\sum_{k=0}^a a_2(K(k,b,c)) + \sum_{k=1}^{a-1} a_2(K(k,b,c)) 
		&= a_2(K(0,b,c)) + a_2(K(a,b,c)) \\ 
		& \quad + 2 \cdot \dfrac{1}{2}(a-1)a(b+c+1) + (a-1)(bc - b-c)\\ 
		&=  bc -b-c + ab + bc + ca +a-b-c \\ 
		& \quad + (a-1)a(b+c+1)+(a-1)(bc-b-c) \\ 
		&= (a+1)(bc - b-c) + a^2(b+c+1) . 
	\end{align*}
	Thus, we have 
	\begin{align*}
		w_3(K(a,b,c)) 
		&= w_3(K(0,b,c)) - \dfrac{a}{2} \\ 
		& \quad + \dfrac{1}{4}\( (a+1)(bc - b-c) + a(b+c+1)(a+b+c+1) \) \\ 
		&= w_3(K(0,0,c)) + \dfrac{b}{2}a_2(K(0,0,c)) + \dfrac{b}{4}(c-1)(1-b-c) - \dfrac{a}{2} \\ 
		& \quad + \dfrac{1}{4}\( (a+1)(bc - b-c) + a(b+c+1)(a+b+c+1) \) \\ 
		&= w_3(K(0,0,0)) + \dfrac{c}{2}a_2(K(0,0,0)) + \dfrac{c}{4}(1-c)
		-\dfrac{bc}{2} + \dfrac{b}{4}(c-1)(b+c-1) \\
		& \quad  - \dfrac{a}{2} + \dfrac{1}{4}\( (a+1)(bc - b-c) + a(b+c+1)(a+b+c+1) \) \\ 
		&= \dfrac{c}{4}(1-c) -\dfrac{bc}{2} + \dfrac{b}{4}(c-1)(b+c-1) - \dfrac{a}{2} \\
		& \quad + \dfrac{1}{4}\( (a+1)(bc - b-c) + a(b+c+1)(a+b+c+1) \) . \qedhere 
	\end{align*}
\end{proof}

\begin{claim}\label{clm:o1}
	For $K = K(a,b,c,d,e) = M\left([2a+1, 2b], [2c+1], [2d+1], [2e+1] \right)$, 
	\begin{align*}
		a_2(K) &= (c+1)(d+1)(e+1)-cde + b(a+c+d+e+2), \\
		w_3(K) &= ( cd(c+d) + de (d+e) + ec (e+c) + 4 cde \\ 
		& \quad + (c+d+e)^2 + 2 (cd + de + ec) + 3(c+d+e) + 2 )/4 \\ 
		& \quad + \dfrac{b}{2} \( (c+1)(d+1)(e+1) -cde \) \\ 
		& \quad + \dfrac{b}{4}(a+c+d+e+2)(a+b+c+d+e+2) \, . 
	\end{align*}
\end{claim}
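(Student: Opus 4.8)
The plan is to trivialize the anti-parallel twist region labelled $2b$ and reconstruct $a_2$ and $w_3$ from the resulting base case, exactly as in the proof of Claim~\ref{clm:a2w3}, but now with a \emph{pretzel} base case governed by Lemma~\ref{lem:pretzel} rather than a double twist knot.

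First I would pin down the base case $b=0$. Setting $b=0$ sends the first rational tangle $[2a+1,2b]$, whose fraction is $2b/(4ab+2b-1)$, to the $0$-tangle; the $2a+1$ crossings then become removable kinks, so $K(a,0,c,d,e)$ is the three-strand pretzel knot $P(2c+1,2d+1,2e+1)$, independent of $a$. By Lemma~\ref{lem:pretzel} (with $x=c$, $y=d$, $z=e$) this gives $a_2(K(a,0,c,d,e)) = (c+1)(d+1)(e+1)-cde$ and $w_3(K(a,0,c,d,e))$ equal to the first bracketed term of the asserted $w_3$. The point to stress is that $a$ disappears from the base knot and re-enters the computation only through the linking number of the smoothed link below.

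Next I would analyze the smoothing at a crossing of the $2b$ region. As in Claim~\ref{clm:a2w3}, I expect the two-component link $K'\cup K''$ produced by the oriented smoothing to consist of two unknots, so that $a_2(K')=a_2(K'')=0$ and Lemma~\ref{lem:w3twists} applies without the correction needed in Claim~\ref{clm:o3}. The key quantitative input is then a direct count of signed crossings in the diagram, which I expect to give $\lk = \lk(K',K'') = -(a+c+d+e+2)$: the summand $a$ comes from the $2a+1$ twists of the first tangle, and $c+d+e+2$ from the three integer tangles together with the closure. Granting this, Equation~\eqref{eq:a2skein} yields $a_2(K(a,b,c,d,e)) = a_2(K(a,0,c,d,e)) - b\,\lk = (c+1)(d+1)(e+1)-cde + b(a+c+d+e+2)$, which is the stated value of $a_2$.

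Finally, for $w_3$ I would substitute into the twist formula~\eqref{eq:TwistFormula} of Lemma~\ref{lem:w3twists} with $n=b$, $K_0=P(2c+1,2d+1,2e+1)$, the values of $a_2(K_0)$ and $w_3(K_0)$ above, and $\lk=-(a+c+d+e+2)$: the term $\frac{b}{2}a_2(K_0)$ reproduces the middle summand, while $\frac{b}{4}\lk(\lk-b)$ becomes $\frac{b}{4}(a+c+d+e+2)(a+b+c+d+e+2)$, matching the last summand, and adding $w_3(K_0)$ completes the formula. The main obstacle is the geometric input of the third paragraph, namely verifying that both smoothed components are unknotted and computing the linking number with the correct sign; everything else is substitution into Lemmas~\ref{lem:pretzel} and~\ref{lem:w3twists}. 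As an internal check, the sign of $\lk$ is forced simultaneously by the coefficient of the $b$-linear term in $a_2$ and by the $b$-quadratic term in $w_3$, so any sign error in the linking number would be detected immediately.
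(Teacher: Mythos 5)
Your proposal is correct and takes essentially the same route as the paper's proof: identify the base case $K(a,0,c,d,e)$ with the pretzel knot $P(2c+1,2d+1,2e+1)$, take its $a_2$ and $w_3$ from Lemma~\ref{lem:pretzel}, and then apply the skein relation~\eqref{eq:a2skein} and the twist formula of Lemma~\ref{lem:w3twists} to the $2b$ twist box with $\lk = -(a+c+d+e+2)$. Your explicit verification that the smoothed components are unknotted and your sign check on $\lk$ merely spell out what the paper leaves implicit in ``applying Lemma~\ref{lem:w3twists} repeatedly,'' so there is no substantive difference.
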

\begin{proof}
	Note that $K(a,0,c,d,e)$ is equivalent to the pretzel knot $P(2c+1,2d+1,2e+1)$. 
	Thus, we apply Lemma~\ref{lem:pretzel} when the pretzel knot appears. 
	First we calculate $a_2$. 
	Applying the skein relation~\eqref{eq:a2skein} to the twist box labeled $2b$, we have
	\begin{align*}
		a_2\(K(a,b,c,d,e)\) 
		&= a_2\(K(a,0,c,d,e)\) + b(a+c+d+e+2) \\ 
		&= (c+1)(d+1)(e+1)-cde + b(a+c+d+e+2). 
	\end{align*} 
	Next we calculate $w_3$. 
	Applying Lemma~\ref{lem:w3twists} repeatedly, 
	\begin{align*}
		w_3(K(a,b,c,d,e)) 
		&= w_3(K(a,0,c,d,e)) + \dfrac{b}{2} a_2(K(a,0,c,d,e)) \\ 
		& \quad + \dfrac{b}{4}(a+c+d+e+2)(a+b+c+d+e+2) \\ 
		&= ( cd(c+d) + de (d+e) + ec (e+c) + 4 cde \\ 
		& \quad + (c+d+e)^2 + 2 (cd + de + ec) + 3(c+d+e) + 2 )/4 \\ 
		& \quad + \dfrac{b}{2} \( (c+1)(d+1)(e+1) -cde \) \\ 
		& \quad + \dfrac{b}{4}(a+c+d+e+2)(a+b+c+d+e+2) \, . \qedhere 
	\end{align*}
\end{proof}

\begin{claim}\label{clm:o4}
	For $K = K(a,b,c,d) = M\left([2a, 2, 2b+1], [2c+1], [2d+1] \right)$, 
	\begin{align*}
		a_2(K) &= a + ac + ad + cd + bc + bd, \\
		w_3(K) &= (cd(c+d) + a^2(1+c+d) +a(c^2+1+2d+d^2+2c+4cd) \\ 
		& \quad + 2b(a+ac+ad+cd) + b (c+d) (b+c+d) )/4
	\end{align*}
\end{claim}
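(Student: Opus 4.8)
The plan is to reduce $K(a,b,c,d)$ to a three-strand pretzel knot by killing the parameter $b$, and then invoke Lemma~\ref{lem:pretzel}, exactly as in the computation of Claim~\ref{clm:o1}. First I would identify the base case: setting $b=0$ collapses the first rational tangle via the continued fraction identity~\eqref{eq:HM6}, since $[2a,2,1] = [2a,1] = [2a-1]$, so that
\[ K(a,0,c,d) = M\left([2a-1],[2c+1],[2d+1]\right) = P(2a-1,2c+1,2d+1). \]
Applying Lemma~\ref{lem:pretzel} with $(x,y,z)=(a-1,c,d)$ then gives closed formulae for $a_2$ and $w_3$ of this base knot; in particular $a_2(P(2a-1,2c+1,2d+1)) = a(c+1)(d+1)-(a-1)cd = a+ac+ad+cd$.

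For $a_2$, I would apply the skein relation~\eqref{eq:a2skein} to a crossing in the $b$-twist region of the first tangle. Reading off the diagram, each such crossing change decreases $b$ by one and the resulting smoothing $L_0$ has linking number $-(c+d)$ independently of $a$; this is consistent with the relation $a_2(K(a,b,c,d)) - a_2(K(a,b-1,c,d)) = c+d$. Iterating down to $b=0$ yields $a_2(K(a,b,c,d)) = a_2(P(2a-1,2c+1,2d+1)) + b(c+d)$, and substituting the base value reproduces the claimed expression $a + ac + ad + cd + bc + bd$.

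For $w_3$, I would present the $b$-twists as $b$ anti-parallel full twists inserted into $K(a,0,c,d)$ and verify that smoothing a crossing in them produces two unknotted components with linking number $-(c+d)$. This is precisely the hypothesis of the twist formula Lemma~\ref{lem:w3twists}, which with $K_0 = P(2a-1,2c+1,2d+1)$, $n=b$, and $\lk = -(c+d)$ gives
\[ w_3(K(a,b,c,d)) = w_3(P(2a-1,2c+1,2d+1)) + \frac{b}{2}\,a_2(P(2a-1,2c+1,2d+1)) + \frac{b}{4}(c+d)(b+c+d). \]
Substituting the pretzel values from Lemma~\ref{lem:pretzel} at $(x,y,z)=(a-1,c,d)$ and collecting terms then yields the stated polynomial, the $b$-free part matching $cd(c+d)+a^2(1+c+d)+a(c^2+1+2d+d^2+2c+4cd)$ all divided by $4$.

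The hard part will be the geometric input underlying the twist formula: confirming from the diagram of $K(a,b,c,d)$ that smoothing a crossing in the $b$-full-twist region splits the knot into two unknotted components, and that their linking number equals $-(c+d)$ and is in particular unaffected by $a$ and by the middle $2$-twist of the first tangle. Once this is checked, both Lemma~\ref{lem:w3twists} and the skein relation~\eqref{eq:a2skein} apply verbatim, and the remaining work—expanding Lemma~\ref{lem:pretzel} and simplifying—is routine algebra that I would not carry out by hand in detail.
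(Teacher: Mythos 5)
Your proposal is correct and takes essentially the same route as the paper's own proof: both identify the base case $K(a,0,c,d)$ with the pretzel knot $P(2a-1,2c+1,2d+1)$ (via $[2a,2,1]=[2a-1]$), compute $a_2$ by iterating the skein relation~\eqref{eq:a2skein} over the $2b+1$ twist box, and obtain $w_3$ from the twist formula (Lemma~\ref{lem:w3twists}) together with the pretzel values of Lemma~\ref{lem:pretzel}, with your $\lk=-(c+d)$ matching the paper's terms $b(c+d)$ and $\tfrac{b}{4}(c+d)(b+c+d)$. The diagrammatic check you flag (unknotted smoothed components, linking number independent of $a$) is precisely the hypothesis the paper also uses, though it leaves the verification implicit.
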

\begin{proof}
	Note that $K(a,0,b,c) = M\left([2a, 2, 1], [2c+1], [2d+1] \right)$ is equivalent to the pretzel knot $P(2a-1,2c+1,2d+1)$. 
	First we calculate $a_2$. 
	Applying the skein relation~\eqref{eq:a2skein} to the twist box labeled $2b+1$ repeatedly, 
	\begin{align*}
		a_2\(K(a,b,c,d)\) 
		&= a_2\( P(2a-1,2c+1,2d+1) \) + b(c+d) \\ 
		&= 1+ (a-1) + c + d + (a-1)c + (a-1)d + cd + b(c+d) \\ 
		&= a + ac + ad + cd + bc + bd. 
	\end{align*} 
	Next we calculate $w_3$. 
	Applying Lemma~\ref{lem:w3twists} repeatedly, 
	\begin{align*}
		w_3(K(a,b,c,d)) 
		&= w_3(P(2a-1,2c+1,2d+1)) + \dfrac{b}{2}(a + ac + ad + cd) + \dfrac{b}{4}(c+d)(b+c+d) \\ 
		&= (cd(c+d) + a^2(1+c+d) +a(c^2+1+2d+d^2+2c+4cd) \\ 
		& \quad + 2b(a+ac+ad+cd) + b (c+d) (b+c+d) )/4 . \qedhere 
	\end{align*}
\end{proof}

\begin{claim}\label{clm:o5}
	For $K = K(a,b,c,d,e) = M\left([2a+1, 2b, 2c], [2d+1], [2e+1] \right)$, 
	\begin{align*}
		a_2(K) &= (a+1)(d+1)(e+1)-ade + c(b+d+e+1), \\
		w_3(K) &= (a^2(1+d+e) +(1+d)(1+e)(2+d+e) + a(3+4d + d^2 + 4e +e^2 +4de) \\ 
		& \quad + 2c((a+1)(d+1)(e+1)-ade) + c(b+d+e+1)(b+c+d+e+1) )/4
	\end{align*}
\end{claim}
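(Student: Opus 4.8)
The plan is to mirror the two-step method of Claims~\ref{clm:o1} and~\ref{clm:o4}: first degenerate $K$ to a three-strand pretzel knot by collapsing one twist region, read off the base values of $a_2$ and $w_3$ from Lemma~\ref{lem:pretzel}, and then restore the general knot by sweeping the collapsed region back in with the crossing-change formulae \eqref{eq:a2skein} and \eqref{eq:w3skein}.

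The first thing I would verify is that specializing $c=0$ collapses the first tangle $[2a+1,2b,2c]$ to $\tfrac{1}{2a+1}$ (its continued-fraction value at $c=0$ is $\tfrac{-1}{-2a-1}$, independent of $b$), so that $K(a,b,0,d,e)=M\!\left(\tfrac{1}{2a+1},\tfrac{1}{2d+1},\tfrac{1}{2e+1}\right)=P(2a+1,2d+1,2e+1)$. Putting $x=a$, $y=d$, $z=e$ in Lemma~\ref{lem:pretzel} then supplies the base values, which I expect to match the $c$-independent parts of the two displayed formulae.

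Next I would reinsert the $2c$ twisting. For $a_2$, applying \eqref{eq:a2skein} at a crossing of the $2c$ box reduces $c$ by one full twist at a time, so it remains to compute the linking number $\lk(K',K'')$ of the two-component link obtained by smoothing there; the target formula forces $\lk=-(b+d+e+1)$, yielding $a_2(K)=a_2(K(a,b,0,d,e))+c(b+d+e+1)$. For $w_3$, the key observation is whether both $K'$ and $K''$ are unknots. If so, then in contrast to Claim~\ref{clm:o3} the twist formula of Lemma~\ref{lem:w3twists} applies directly with $n=c$ and $\lk=-(b+d+e+1)$, and the contributions $\tfrac{c}{2}a_2(K(a,b,0,d,e))$ and $\tfrac{c}{4}\lk(\lk-c)=\tfrac{c}{4}(b+d+e+1)(b+c+d+e+1)$ assemble exactly into the $c$-dependent part of the stated $w_3$.

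The step I expect to be the main obstacle is the geometric bookkeeping feeding Lemma~\ref{lem:w3twists}: fixing the sign and value of $\lk(K',K'')$ from the diagram, and confirming that smoothing a crossing in the innermost $2c$ box genuinely separates $K$ into two trivial components rather than leaving one knotted, which is the situation that forced the longer direct computation via \eqref{eq:w3skein} in Claim~\ref{clm:o3}. Since the claimed $w_3$ already has the characteristic shape $w_3(K_0)+\tfrac{c}{2}a_2(K_0)+\tfrac{c}{4}\lk(\lk-c)$, I am fairly confident this triviality holds; once it is checked, all that remains is the routine verification that Lemma~\ref{lem:pretzel} expands to the displayed $c$-free polynomials.
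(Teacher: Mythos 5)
Your proposal is correct and follows essentially the same route as the paper's proof: specialize $c=0$ so that $K(a,b,0,d,e)=P(2a+1,2d+1,2e+1)$, read off the base values from Lemma~\ref{lem:pretzel} with $x=a$, $y=d$, $z=e$, then restore the $2c$ twists via the skein relation~\eqref{eq:a2skein} for $a_2$ and via Lemma~\ref{lem:w3twists} with $\lk = -(b+d+e+1)$ for $w_3$. The checkpoints you flag (triviality of both smoothed components and the sign/value of $\lk$) are exactly what the paper's one-line invocation of Lemma~\ref{lem:w3twists} leaves implicit, so your account matches the paper's argument in full.
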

\begin{proof}
	Note that $K(a,b,0,d,e) = M\left([2a+1, 2b, 0], [2d+1], [2e+1] \right)$ is equivalent to the pretzel knot $P(2a+1,2d+1,2e+1)$. 
	First we calculate $a_2$. 
	Applying the skein relation~\eqref{eq:a2skein} to the twist box labeled $2b$ repeatedly, 
	\begin{align*}
		a_2\(K(a,b,c,d,e)\) 
		&= a_2\( P(2a+1,2d+1,2e+1) \) + c(b+d+e+1) \\ 
		&= (a+1)(d+1)(e+1)-ade + c(b+d+e+1). 
	\end{align*} 
	Next we calculate $w_3$. 
	Applying Lemma~\ref{lem:w3twists} repeatedly, 
	\begin{align*}
		w_3(K(a,b,c,d,e)) 
		&= w_3(P(2a+1,2d+1,2e+1)) + \dfrac{c}{2}((a+1)(d+1)(e+1)-ade) \\ 
		& \quad + \dfrac{c}{4}(b+d+e+1)(b+c+d+e+1) \\ 
		&= (a^2(1+d+e) +(1+d)(1+e)(2+d+e) + a(3+4d + d^2 + 4e +e^2 +4de) \\ 
		& \quad + 2c((a+1)(d+1)(e+1)-ade) + c(b+d+e+1)(b+c+d+e+1) )/4 . \qedhere 
	\end{align*}
\end{proof}

\begin{claim}\label{clm:e1}
	For $K = K(a,b,c,d,e) = M\left([2a], [2b,2c] ,[2d,2e] \right)$, 
	\begin{align*}
		a_2(K) &= bc + de + ac + ae, \\
		w_3(K) &= (bc(b+c) + de(d+e) + 2a(bc + de) + a(c+e)(a+c+e) )/4
	\end{align*}
\end{claim}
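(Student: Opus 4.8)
The plan is to reduce the $2a$ crossings of the first tangle, exactly as in the proof of Claim~\ref{clm:e2}, and to read off the answer from a connected-sum base case. First I would determine the knot obtained by deleting these crossings, namely $K(0,b,c,d,e)$, in which the first tangle becomes the $\infty$-tangle of two parallel vertical strands. As in Claim~\ref{clm:e2}, switching (and then removing, by a Reidemeister~II move) the crossings of the first tangle splices the numerator closures of the two remaining tangles in series, so that $K(0,b,c,d,e) = \mathrm{DT}(2b,2c)\,\sharp\,\mathrm{DT}(2d,2e)$, using that the numerator closure of $[2b,2c]$ (resp.\ $[2d,2e]$) is the double twist knot $\mathrm{DT}(2b,2c)$ (resp.\ $\mathrm{DT}(2d,2e)$).

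Next I would record the base-case invariants. Both $a_2$ and $w_3$ are additive under connected sum --- for $a_2$ this is used throughout, and for $w_3$ it follows from the multiplicativity $V_{J_1\sharp J_2}=V_{J_1}V_{J_2}$ together with $V_J'(1)=0$ via $w_3(J)=\tfrac{1}{72}V_J'''(1)+\tfrac{1}{24}V_J''(1)$ --- so Lemmas~\ref{lem:a2TK} and \ref{lem:w3TK} give
\[ a_2(K(0,b,c,d,e)) = bc+de, \qquad w_3(K(0,b,c,d,e)) = \frac{bc(b+c)+de(d+e)}{4}. \]

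Finally I would turn the $a$ full twists back on. The geometric input I need is that the oriented smoothing of a single crossing in the $[2a]$ region yields a two-component link $K'\cup K''$ whose two components are each unknotted and satisfy $\lk(K',K'')=-(c+e)$; the parameters $c$ and $e$ are exactly the horizontal twistings of the $[2b,2c]$ and $[2d,2e]$ tangles through which the two strands emerging from the smoothed region wind around one another. Granting this, Equation~\eqref{eq:a2skein} applied $a$ times gives $a_2(K(a,b,c,d,e)) = bc+de+a(c+e)$, while Lemma~\ref{lem:w3twists} with $n=a$ and $\lk=-(c+e)$ gives
\[ w_3(K(a,b,c,d,e)) = w_3(K(0,b,c,d,e)) + \frac{a}{2}(bc+de) + \frac{a}{4}(c+e)(c+e+a), \]
which is the claimed formula once the base-case values are inserted.

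The main obstacle is the geometric verification highlighted above: one must draw the reduced Montesinos diagram and confirm that smoothing a crossing of the $[2a]$ tangle leaves both components unknotted --- so that the hypothesis of Lemma~\ref{lem:w3twists} genuinely holds, in contrast to the situation of Claim~\ref{clm:o3} --- and pin down the linking number $-(c+e)$ with its sign under the prevailing orientation conventions. The remaining steps are the routine bookkeeping already performed in Claims~\ref{clm:o1} and \ref{clm:e2}.
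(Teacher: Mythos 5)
Your proposal is correct and takes essentially the same route as the paper's proof: identify $K(0,b,c,d,e)=\mathrm{DT}(2b,2c)\,\sharp\,\mathrm{DT}(2d,2e)$, evaluate this base case via Lemmas~\ref{lem:a2TK} and \ref{lem:w3TK} together with additivity under connected sum, then restore the $a$ full twists using Equation~\eqref{eq:a2skein} and Lemma~\ref{lem:w3twists} with $\lk=-(c+e)$, which reproduces the claimed formulas exactly. Your explicit justification of the additivity of $w_3$ (via $V_{J_1\sharp J_2}=V_{J_1}V_{J_2}$ and $V_J'(1)=0$) and your flagging of the unknottedness hypothesis of Lemma~\ref{lem:w3twists} are points the paper leaves implicit, so your write-up is, if anything, slightly more careful than the original.
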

\begin{proof}
	Note that $K(0,b,c,d,e)$ is equivalent to $\mathrm{DT}(2b,2c) \sharp \mathrm{DT}(2d,2e)$. 
	First we calculate $a_2$. 
	Applying the skein relation~\eqref{eq:a2skein} to the twist box labeled $2a$ repeatedly, 
	\begin{align*}
		a_2\(K(a,b,c,d,e)\) 
		&= a_2\( K(0,b,c,d,e) \) +a (c+e) \\ 
		&= bc + de + ac + ae.  
	\end{align*}
	Next we calculate $w_3$. 
	Applying Lemma~\ref{lem:w3twists} repeatedly, 
	\begin{align*}
		w_3(K(a,b,c,d,e)) 
		&= w_3(K(0,b,c,d,e)) + \dfrac{a}{2}(bc + de) + \dfrac{a}{4}(c+e)(a+c+e) \\ 
		&= w_3(\mathrm{DT}(2b,2c)) + w_3(\mathrm{DT}(2d,2e)) + \dfrac{a}{2}(bc + de) + \dfrac{a}{4}(c+e)(a+c+e) \\ 
		&= (bc(b+c) + de(d+e) + 2a(bc + de) + a(c+e)(a+c+e) )/4 . \qedhere 
	\end{align*} 
\end{proof}

\end{document}